\newtheorem{theorem}{Theorem}
\newtheorem{lemma}{Lemma}
\newtheorem{false statement}{False Statement}
\newtheorem{corollary}{Corollary}
\theoremstyle{definition}
\newtheorem{claim}{Claim}
\newtheorem{case}{Case}
\newtheorem{subcase}{Case}[case]
\newtheorem{subsubcase}{Case}[subcase]
\newtheorem{subsubsubcase}{Case}[subsubcase]
\newcounter{mathitem}
\newenvironment{mathitem}
  {\begin{list}{{$(\roman{mathitem})$}}{
   \setcounter{mathitem}{0}
   \usecounter{mathitem}
   \setlength{\topsep}{0pt plus 2pt minus 0pt}
   \setlength{\parskip}{0pt plus 2pt minus 0pt}
   \setlength{\partopsep}{0pt plus 2pt minus 0pt}
   \setlength{\parsep}{0pt plus 2pt minus 0pt}
   \setlength{\leftmargin}{20pt}
   \setlength{\itemsep}{0pt plus 2pt minus 0pt}}}
  {\end{list}}
\newcommand{\pa}{{\rm par}}
\begin{document}

\title{\bf\Large The Ramsey numbers of paths versus wheels: a complete
solution\thanks{Supported by NSFC (No. 11271300) and the Doctorate
Foundation of Northwestern Polytechnical University (No. cx201202
and No. cx201326). E-mail addresses: libinlong@mail.nwpu.edu.cn (B.
Li), ningbo\_math84@mail.nwpu.edu.cn (B. Ning).}}

\date{}

\author{Binlong Li$^{a,b}$ and Bo Ning$^a$\\[2mm]
\small $^a$ Department of Applied Mathematics,
\small Northwestern Polytechnical University,\\
\small Xi'an, Shaanxi 710072, P.R.~China\\
\small $^b$ Department of Mathematics, University of West Bohemia,\\
\small Univerzitn\'i 8, 306 14 Plze\v n, Czech Republic\\}
\maketitle

\begin{abstract}
Let $G_1$ and $G_2$ be two given graphs. The Ramsey number
$R(G_1,G_2)$ is the least integer $r$ such that for every graph $G$
on $r$ vertices, either $G$ contains a $G_1$ or $\overline{G}$
contains a $G_2$. We denote by $P_n$ the path on $n$ vertices and
$W_m$ the wheel on $m+1$ vertices. Chen et al. and Zhang determined
the values of $R(P_n,W_m)$ when $m\leq n+1$ and when $n+2\leq m\leq
2n$, respectively. In this paper we determine all the values of
$R(P_n,W_m)$ for the left case $m\geq 2n+1$. Together with Chen et
al's and Zhang's results, we give a complete solution to the problem
of determining the Ramsey numbers of paths versus wheels.

\medskip
\noindent {\bf Keywords:} Ramsey number; Path; Wheel
\smallskip

\noindent {\bf AMS Subject Classification:} 05C55, 05D10

\end{abstract}

\section{Introduction}

We use Bondy and Murty \cite{Bondy_Murty} for terminology and
notation not defined here, and consider finite simple graphs only.

Let $G$ be a graph. We denote by $\nu(G)$ the order of $G$, by
$\delta(G)$ the minimum degree of $G$, and by $\omega(G)$ the
component number of $G$. We denote by $P_n$ and $C_n$ the path and
cycle on $n$ vertices, respectively. The \emph{wheel} on $n+1$
vertices, denoted by $W_n$, is the graph obtained by joining a
vertex to each vertex of a $C_n$.

Let $G_1$ and $G_2$ be two graphs. The \emph{Ramsey number}
$R(G_1,G_2)$, is defined as the least integer $r$ such that for
every graph $G$ on $r$ vertices, either $G$ contains a $G_1$ or
$\overline{G}$ contains a $G_2$, where $\overline{G}$ is the
complement of $G$. If $G_1$ and $G_2$ are both complete, then
$R(G_1,G_2)$ is the classical Ramsey number $r(\nu(G_1),\nu(G_2))$.
Otherwise, $R(G_1,G_2)$ is usually called the \emph{generalized
Ramsey number}.

In 1967, Gerencs\'er and Gy\'arf\'as \cite{Gerencser_Gyarfas}
computed the Ramsey numbers of all path-path pairs, and gave the
first generalized Ramsey number formula. (In fact, this question of
determining Ramsey numbers of paths versus paths appeared in a paper
of Erd\"{o}s \cite{Erdos} in 1947, and the right upper bound was
also determined there.) After that, Faudree et al.
\cite{Faudree_Lawrence_Parsons_Schelp} determined the Ramsey numbers
of paths versus cycles. We list these results as bellow, both of
them will be used in this paper.

\begin{theorem}[Gerencs\'er and Gy\'arf\'as \cite{Gerencser_Gyarfas}]
If $m\geq n\geq 2$, then
$$R(P_n,P_m)=m+\lfloor n/2\rfloor-1.$$
\end{theorem}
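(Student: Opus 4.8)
I would prove the two bounds $R(P_n,P_m)\ge m+\lfloor n/2\rfloor-1$ and $R(P_n,P_m)\le m+\lfloor n/2\rfloor-1$ separately.

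\emph{Lower bound.} The plan is to exhibit one graph $G$ on $m+\lfloor n/2\rfloor-2$ vertices that beats both requirements at once. Set $s=\lfloor n/2\rfloor-1$ and let $G=K_s\vee\overline{K_{m-1}}$, a clique on $s$ vertices joined completely to an independent set on $m-1$ vertices. A path in $G$ can never use two of the independent vertices in a row, so the number of independent vertices it uses exceeds the number of clique vertices it uses by at most one; hence any path in $G$ has at most $2s+1=2\lfloor n/2\rfloor-1\le n-1$ vertices and $G$ contains no $P_n$. On the other hand $\overline G=K_{m-1}\cup\overline{K_s}$, whose longest path is $P_{m-1}$, so $\overline G$ contains no $P_m$. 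Thus $R(P_n,P_m)>m+\lfloor n/2\rfloor-2$. (For $n\in\{2,3\}$ one has $s=0$ and $G=\overline{K_{m-1}}$ works.)

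\emph{Upper bound.} Let $G$ be any graph on $N=m+\lfloor n/2\rfloor-1$ vertices with no $P_n$; I must find a $P_m$ in $\overline G$. Pick a longest path $P=v_1v_2\cdots v_k$ of $\overline G$ and suppose, for contradiction, that $k\le m-1$, so $S:=V(G)\setminus V(P)$ has $|S|=N-k\ge\lfloor n/2\rfloor$. Because $P$ is longest, neither endpoint has a $\overline G$-neighbour outside $P$; equivalently, in $G$ both $v_1$ and $v_k$ are joined to every vertex of $S$. By the standard P\'osa rotation argument applied to $P$ there are two possibilities: either $\overline G[V(P)]$ contains a spanning cycle of $V(P)$, or $\deg_{\overline G}(v_1)+\deg_{\overline G}(v_k)\le k-1$; I would handle these in turn. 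In the first case any edge of $\overline G$ from $S$ to $V(P)$ would yield a path of $\overline G$ on $k+1$ vertices, so $S$ is joined in $G$ to \emph{all} of $V(P)$ and $G$ contains the complete bipartite graph with parts $S$ (size $\ge\lfloor n/2\rfloor$) and $V(P)$ (size $k$); if $k\ge\lceil n/2\rceil$ this already contains a $P_{2\lfloor n/2\rfloor+1}\supseteq P_n$, while if $k<\lceil n/2\rceil$ one has $|S|\ge n-1$ and finishes by combining this bipartite skeleton with the structure of $G[S]$ (which has no $P_n$, while $\overline G[S]$ has no $P_{k+1}$), for instance by running a path inside $S$ and inserting a vertex of $V(P)$ between consecutive vertices of it. In the second case both endpoints have large degree in $G$, and rotations of $P$ produce a whole set of vertices each joined in $G$ to all of $S$, from which one pieces together a $P_n$ in $G$ (using also that a small $\overline G$-degree at an endpoint forces a large $G$-degree there). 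Either way the assumption $k\le m-1$ is contradicted, so $k\ge m$.

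The hard part is exactly this assembling step: turning "many vertices joined to a common large set" into a genuine $P_n$ while watching the parity — the estimate $|S|\ge\lfloor n/2\rfloor$ is tight for even $n$, so for odd $n$ one must extract one extra vertex, and the small-$k$ range of the spanning-cycle case needs its own careful sub-analysis. An alternative and cleaner framework uses the classical fact that for any graph $H$ the vertex set splits as $V_1\cup V_2$ with $H[V_1]$ containing a Hamilton path of $V_1$ and $\overline H[V_2]$ containing a Hamilton path of $V_2$ (provable by induction, moving one vertex at a time); taking such a partition of $G$ with $|V_1|$ maximum forces the endpoints of the $V_1$-path to be non-adjacent in $G$ to the endpoints of the $V_2$-path, hence one can splice them to lengthen the path of $\overline G$ by two vertices, and if this is still shorter than $P_m$ a one-vertex-at-a-time re-routing argument should close the gap. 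In every version the crux is the same: pushing the path of $G$ up from about $\lfloor n/2\rfloor$ vertices to $n$.
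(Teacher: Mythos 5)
The paper does not prove this statement: Theorem~1 is quoted from Gerencs\'er and Gy\'arf\'as as a known result and used as a black box, so there is no in-paper proof to compare yours against; it must stand on its own. Your lower bound does stand: $G=K_s\vee\overline{K_{m-1}}$ with $s=\lfloor n/2\rfloor-1$ has $m+\lfloor n/2\rfloor-2$ vertices, any path in $G$ uses at most $s+1$ vertices of the independent set and hence at most $2s+1\le n-1$ vertices in total, and $\overline{G}=K_{m-1}\cup\overline{K_s}$ has no $P_m$. That part is complete and is the standard extremal colouring.

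The upper bound, however, has a genuine gap, and it sits exactly where you yourself flag the ``hard part.'' In your second case ($\deg_{\overline G}(v_1)+\deg_{\overline G}(v_k)\le k-1$), rotations do give a set $T\subseteq V(P)$ of longest-path endpoints each joined in $G$ to all of $S$, but P\'osa's lemma bounds $|T|$ from below only by roughly $\deg_{\overline G}(v_1)$, which in this very case may be tiny; and the complementary fact that $v_1$ has large $G$-degree does not by itself yield a $P_n$ (a star has a vertex of degree $n-1$ and no $P_4$). So ``one pieces together a $P_n$'' is an assertion, not an argument, and it is where the entire difficulty of the theorem lives. Similarly, in the small-$k$ branch of your first case you only have $K_{|S|,k}$ with $k$ possibly as small as $1$, and ``run a path inside $S$ and insert vertices of $V(P)$'' presupposes a long path in $G[S]$; all you actually know is that $\overline G[S]$ has no $P_{k+1}$, and converting that into a long path of $G[S]$ is the original problem one level down, requiring an induction you have not set up. Your alternative framework --- partitioning $V$ into a part spanned by a path of $G$ and a part spanned by a path of $\overline G$, which is essentially Gerencs\'er and Gy\'arf\'as's own key lemma --- is the right tool, but it directly guarantees only a path of $\overline G$ on $N-(n-1)=m-\lceil n/2\rceil$ vertices, and the ``one-vertex-at-a-time re-routing'' needed to push this up to $m$ is precisely the content of the theorem rather than a routine afterthought. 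As written, the upper bound is a plausible plan, not a proof.
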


\begin{theorem}[Faudree et al. \cite{Faudree_Lawrence_Parsons_Schelp}]
If $n\geq 2$ and $m\geq 3$, then \\
$$R(P_n,C_m)=\left\{\begin{array}{ll}
2n-1,           & \mbox{for } n\geq m \mbox{ and } m \mbox{ is odd};\\
n+m/2-1,        & \mbox{for } n\geq m \mbox{ and } m \mbox{ is even};\\
\max\{m+\lfloor n/2\rfloor-1,2n-1\},    & \mbox{for } m>n \mbox{ and }
                    m \mbox{ is odd};\\
m+\lfloor n/2\rfloor-1,     & \mbox{for } m>n \mbox{ and } m \mbox{
                    is even}.
\end{array}\right.$$
\end{theorem}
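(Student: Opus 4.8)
The plan is to prove, in each of the four ranges, a lower bound by an explicit construction and a matching upper bound. It is convenient to keep in mind the uniform reformulation that the four cases amount to: $R(P_n,C_m)=R(P_n,P_m)$ when $m$ is even, and $R(P_n,C_m)=\max\{R(P_n,P_m),\,2n-1\}$ when $m$ is odd (using Theorem~1 to evaluate $R(P_n,P_m)$). So morally the whole theorem says that a $P_m$ in $\overline G$ can be upgraded to a $C_m$, except that for odd $m$ there is a bipartite obstruction costing $2n-1$ vertices.

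For the lower bounds we exhibit, for each claimed value $r$, a graph $G$ on $r-1$ vertices with $P_n\not\subseteq G$ and $C_m\not\subseteq\overline G$. The value $2n-1$ (present whenever $m$ is odd) comes from $G=K_{n-1}\cup K_{n-1}$: the components are too small for a $P_n$, while $\overline G=K_{n-1,n-1}$ is bipartite and has no odd cycle. The value $m+\lfloor n/2\rfloor-1$ (present whenever $m>n$) needs no new graph: a $C_m$ contains a $P_m$, so any certificate for $R(P_n,P_m)>r-1$ is also one for $R(P_n,C_m)>r-1$, whence $R(P_n,C_m)\ge R(P_n,P_m)=m+\lfloor n/2\rfloor-1$ by Theorem~1. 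For $m\le n$ with $m$ even, take $G=K_{n-1}\cup\overline{K_{m/2-1}}$: again $G$ has no $P_n$, and $\overline G=\overline{K_{n-1}}+K_{m/2-1}$ is a join in which every cycle alternates in and out of the small clique and hence has at most $2(m/2-1)=m-2$ vertices. Taking the maximum in the two odd ranges gives the stated formula.

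For the upper bounds, fix a graph $G$ on $N$ vertices with $P_n\not\subseteq G$, where $N$ is the claimed value, and aim to find $C_m$ in $\overline G$. A routine check shows $N\ge R(P_n,P_m)$ in each range (with equality in the even ranges), so Theorem~1 already puts a path on at least $m$ vertices in $\overline G$; take $Q=u_1u_2\cdots u_\ell$ to be a longest path of $\overline G$, so that $\ell\ge m$ and $N_{\overline G}(u_1)\cup N_{\overline G}(u_\ell)\subseteq V(Q)$ — equivalently, in $G$ both ends of $Q$ are joined to all $N-\ell$ vertices off $Q$. Now run a rotation/crossing analysis on $Q$: whenever there is an index $j$ with $u_1u_j\in E(\overline G)$ and $u_{j-1}u_\ell\in E(\overline G)$, splicing with subpaths of $Q$ yields a cycle, and with care one can hit length exactly $m$. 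Hence if $\overline G$ has no $C_m$, then for almost every index $j$ one of $u_1u_j,\ u_{j-1}u_\ell$ lies in $G$; together with the $N-\ell$ common $G$-neighbours of $u_1,u_\ell$ outside $Q$ this crowds a lot of edges into $G$ near the ends of $Q$, and an Ore/Bondy--Chv\'atal-type rerouting then threads a $G$-path through $Q$ and the vertices off $Q$ to produce a $P_n$ in $G$ — a contradiction — unless $\overline G$ has a very rigid shape: a near-balanced complete bipartite graph, the very extremal example behind the $2n-1$ term.

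The main obstacle is therefore the analysis of this near-bipartite exceptional configuration, which is exactly what forces the split by the parity of $m$. For even $m$ the configuration already contains the desired $C_m$ (a balanced complete bipartite graph contains every even cycle up to its order), so nothing more is needed and one gets $R(P_n,C_m)=R(P_n,P_m)$. For odd $m$ one instead uses that $N$ is as large as $2n-1$ to break bipartiteness: the resulting odd cycle lives in a dense, $2$-connected, non-bipartite part of $\overline G$, to which Bondy's pancyclicity theorem — whose sole exception is precisely the balanced complete bipartite graph — applies to deliver $C_m$. In the range $m\le n$, where $\overline G$ is dense but the longest path $Q$ may be much shorter than $N$, there is the extra bookkeeping of first locating the $2$-connected block of $\overline G$ carrying the long cycle, via the block/component structure of the $P_n$-free graph $G$. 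Getting this extremal-case analysis to go through uniformly across the four ranges — rather than any single slick step — is where the real difficulty lies.
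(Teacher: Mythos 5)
The paper offers no proof of this statement: Theorem~2 is quoted verbatim from Faudree, Lawrence, Parsons and Schelp and used as a black box, so there is no internal argument to compare yours against. Judged on its own terms, your lower-bound half is correct and complete: $K_{n-1}\cup K_{n-1}$ for the $2n-1$ term, $K_{n-1}\cup\overline{K_{m/2-1}}$ for the even case $m\le n$ (the count $a\le b$ of independent-set versus clique vertices on any cycle of the join does cap cycle lengths at $m-2$), and monotonicity $R(P_n,C_m)\ge R(P_n,P_m)$ for $m>n$. The uniform reformulation via Theorem~1 is also checked correctly.

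The upper-bound half, however, has a genuine gap at its core. The sentence ``splicing with subpaths of $Q$ yields a cycle, and with care one can hit length exactly $m$'' assumes away precisely the hard part of the theorem. A crossing pair $u_1u_j,\ u_{j-1}u_\ell\in E(\overline G)$ on a longest path $Q$ of order $\ell$ gives a cycle through \emph{all} of $V(Q)$, i.e.\ of length $\ell$, and when $\ell>m$ nothing in the rotation argument lets you prescribe the length $m$; producing a cycle of one exact length (rather than a long cycle) is the entire difficulty of path--cycle Ramsey numbers, and for odd $m$ it genuinely fails without further hypotheses, as your own $K_{n-1,n-1}$ example shows. The two subsequent steps are likewise asserted rather than proved: that every failure of the splicing step forces $\overline G$ into a near-balanced complete bipartite shape is the conclusion of a stability analysis you have not carried out (one must rule out many other ways the crossing condition can fail), and invoking Bondy's pancyclicity theorem requires first exhibiting a Hamiltonian cycle in the relevant block together with the Ore-type degree condition, neither of which the sketch establishes. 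So while the architecture (longest path, rotation, extremal bipartite case, pancyclicity) is a reasonable plan, what you have written is an outline of a proof, not a proof; the passage from ``$\overline G$ contains a long path/cycle'' to ``$\overline G$ contains $C_m$ exactly'' must be supplied before the upper bounds can be considered established.
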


Recently, graph theorists have begun to investigate the Ramsey
numbers of paths versus wheels. Baskoro and Surahmat
\cite{Bakoro_Surahmat} conjectured the values of $R(P_n,W_m)$ when
$n\geq m-1$, and got some partial results. Chen et al.
\cite{Chen_Zhang_Zhang} completely determined the values of
$R(P_n,W_m)$ when $n\geq m-1$. Salman and Broersma
\cite{Salmana_Broersma} further generalized Chen et al.'s result.
Zhang \cite{Zhang} firstly obtained all the values of $R(P_n,W_m)$
when $n+2\leq m\leq 2n$. We list the results of Chen et al.'s and
Zhang's in the following.

\begin{theorem}[Chen et al. \cite{Chen_Zhang_Zhang}]
If $3\leq m\leq n+1$, then
$$R(P_n,W_m)=\left\{\begin{array}{ll}
3n-2,   &m \mbox{ is odd};\\
2n-1,   &m \mbox{ is even}.
\end{array}\right.$$
\end{theorem}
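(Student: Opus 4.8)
The plan is to prove, in each parity class, a matching lower and upper bound. The external ingredient is Theorem~2, which for $3\le m\le n+1$ gives $R(P_n,C_m)=2n-1$ when $m$ is odd and $R(P_n,C_m)=n+m/2-1$ when $m$ is even; the internal ingredient is the elementary fact, proved by a rotation argument on a longest path, that every connected graph $H$ contains a path on at least $\min\{\nu(H),\,2\delta(H)+1\}$ vertices.

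\emph{Lower bounds.} When $m$ is odd, let $G=3K_{n-1}$ on $3n-3$ vertices: $G$ has no $P_n$ since each component has only $n-1$ vertices, while $\overline G=K_{n-1,n-1,n-1}$ is $3$-chromatic whereas an odd wheel has chromatic number $4$, so $\overline G$ has no $W_m$; hence $R(P_n,W_m)\ge 3n-2$. When $m$ is even, let $G=2K_{n-1}$ on $2n-2$ vertices: again $G$ has no $P_n$, while $\overline G=K_{n-1,n-1}$ is bipartite, hence triangle-free, and so cannot contain $W_m$, which contains a triangle; hence $R(P_n,W_m)\ge 2n-1$.

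\emph{Upper bounds.} The core step is: if $G$ has no $P_n$ and some vertex $v$ satisfies $\deg_{\overline G}(v)\ge R(P_n,C_m)$, then $\overline G$ contains $W_m$; indeed, with $S=N_{\overline G}(v)$ the graph $G[S]$ has no $P_n$, so $\overline{G[S]}$ contains a $C_m$, and $v$ together with that cycle is a $W_m$. When $m$ is odd and $\nu(G)=3n-2$: if $\delta(G)\ge n-1$, then the endpoint of a longest path in $G$ has all of its neighbours on that path, which therefore spans at least $n$ vertices, contradicting $P_n\not\subseteq G$; hence some $v$ has $\deg_G(v)\le n-2$, i.e.\ $\deg_{\overline G}(v)\ge 2n-1=R(P_n,C_m)$, and we are done.

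When $m$ is even and $\nu(G)=2n-1$: either some $v$ has $\deg_G(v)\le n-1-m/2$, so $\deg_{\overline G}(v)\ge n+m/2-1=R(P_n,C_m)$ and we finish as above, or $\delta(G)\ge n-m/2$. In the latter case $2\delta(G)+1\ge 2n-m+1\ge n$ (using $m\le n+1$), so by the path fact any component of $G$ on at least $n$ vertices would contain a $P_n$; thus every component has at most $n-1$ vertices, and since $2(n-1)<2n-1$ there are at least three components, while each has at least $\delta(G)+1\ge n+1-m/2\ge m/2$ vertices, which forces at most three. Hence $G$ has exactly three components $C_1,C_2,C_3$, and $\overline G$ contains the complete tripartite graph with parts $V(C_1),V(C_2),V(C_3)$, each of size at least $m/2$; since $W_m\subseteq K_{1,m/2,m/2}$ for even $m$ (put the hub in one part and route $C_m$ alternately through the other two), $\overline G$ contains $W_m$. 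The step I expect to be most delicate is this last one: making the component-size bookkeeping watertight for all admissible $n$ --- including small cases and the boundary value $m=n+1$ --- so that the $P_n$-free hypothesis together with $\delta(G)\ge n-m/2$ genuinely forces exactly three components whose colour classes are large enough to host $W_m$.
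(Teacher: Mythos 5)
Theorem 3 is quoted in the paper from Chen, Zhang and Zhang without proof, so there is no in-paper argument to compare yours against; I can only assess your proof on its own terms, and it checks out. Your lower-bound constructions ($3K_{n-1}$ against the $4$-chromatic odd wheel, $2K_{n-1}$ against the triangle in $W_m$) are the standard extremal graphs, and your central device --- if some vertex $v$ of a $P_n$-free graph $G$ has $\deg_{\overline G}(v)\ge R(P_n,C_m)$ then $\overline G[N_{\overline G}(v)]$ yields a $C_m$ and hence a $W_m$ hubbed at $v$ --- is sound and is in fact the same mechanism this paper uses in Claims 2 and 3 of Section 3 for the regime $m\ge 2n+1$. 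I verified the two specializations of Theorem 2 you rely on: for $3\le m\le n+1$ one does get $R(P_n,C_m)=2n-1$ ($m$ odd) and $n+m/2-1$ ($m$ even), including the boundary $m=n+1$ where the $m>n$ branch of Theorem 2 must be consulted. The delicate step you flagged also holds up: with $\delta(G)\ge n-m/2$ on $2n-1$ vertices, every component has between $n+1-m/2\ge(n+1)/2$ and $n-1$ vertices, which pins the number of components at exactly three (for small $n$ the count $3(n+1-m/2)>2n-1$ already gives a contradiction, which only shortens the argument), and the embedding $W_m\subseteq K_{1,m/2,m/2}$ for even $m$ finishes it. The only presentational quibble is that the ``rotation argument'' fact you cite is exactly Lemma 2(5) of this paper with $n=2\delta+1$, so you could have invoked that instead of treating it as external folklore.
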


\begin{theorem}[Zhang \cite{Zhang}]
If $n+2\leq m\leq 2n$, then
$$R(P_n,W_m)=\left\{\begin{array}{ll}
3n-2,   &m \mbox{ is odd};\\
m+n-2,  &m \mbox{ is even}.
\end{array}\right.$$
\end{theorem}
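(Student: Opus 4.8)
The plan is to prove each of the two equalities by establishing a matching lower and upper bound, splitting according to the parity of $m$. For the lower bounds one exhibits a single graph $G$ on one fewer vertex than the claimed value with $G \not\supseteq P_n$ and $\overline G \not\supseteq W_m$. When $m$ is odd, take $G = 3K_{n-1}$, a disjoint union of three cliques of order $n-1$: it has $3n-3$ vertices and no component of order $n$, hence no $P_n$, while $\overline G = K_{n-1,n-1,n-1}$ has the property that the $\overline G$-neighbourhood of every vertex spans a copy of $K_{n-1,n-1}$, a bipartite graph containing no odd cycle and in particular no $C_m$; so $\overline G$ has no $W_m$, giving $R(P_n,W_m)\ge 3n-2$. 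When $m$ is even, take $G = K_{n-1}\cup K_{m/2-1}\cup K_{m/2-1}$; since $m\le 2n$ we have $m/2-1\le n-1$, so no component has order $n$ and $G$ has no $P_n$, while in $\overline G = K_{n-1,\,m/2-1,\,m/2-1}$ the $\overline G$-neighbourhood of any vertex spans either $K_{m/2-1,m/2-1}$ or $K_{n-1,m/2-1}$, each of circumference $m-2<m$, so there is no rim $C_m$ and no $W_m$, giving $R(P_n,W_m)\ge m+n-2$. Note that in a complete multipartite graph the $\overline G$-neighbourhood of a vertex is exactly the union of the other parts, so these cases are exhaustive.

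For the upper bound let $G$ be a graph on $N$ vertices with no $P_n$, where $N=3n-2$ (resp.\ $N=m+n-2$) when $m$ is odd (resp.\ even); the goal is to find $W_m$ in $\overline G$. First, since $G$ has no $P_n$, the Gallai--Roy theorem gives $\chi(G)\le n-1$, so $V(G)$ is a union of $n-1$ independent sets of $G$, i.e.\ $n-1$ cliques of $\overline G$, which we order $V_1,\dots,V_{n-1}$ with $|V_1|\ge\cdots\ge|V_{n-1}|$. Second, we locate a hub: by the Erd\H{o}s--Gallai edge bound $e(G)\le (n-2)N/2$ there is a vertex $v$ with $d_G(v)\le n-2$, hence $d_{\overline G}(v)\ge N-n+1$. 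When $m$ is odd this is $\ge 2n-1\ge m$ and the hub is found; when $m$ is even it is only $\ge m-1$, and here one uses the characterisation of the extremal graphs for the Erd\H{o}s--Gallai bound (disjoint copies of $K_{n-1}$, or a graph consisting of a small clique dominating an independent remainder) to conclude that either some vertex already has $\overline G$-degree $\ge m$, or $N$ is divisible by $n-1$ --- the latter being impossible for $n+2\le m\le 2n$ --- so in every case we get a vertex $v$ with $S:=N_{\overline G}(v)$ of size $\ge m$.

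It remains to find a rim: a cycle $C_m\subseteq\overline G[S]$, which together with $v$ gives $W_m$. Since $G[S]$ still has no $P_n$, $\overline G[S]$ is again covered by at most $n-1$ cliques $C_1\supseteq C_2\supseteq\cdots$. The key mechanism is: if two of these satisfy $|C_i|+|C_j|\ge m$ and the bipartite graph $\overline{G[C_i,C_j]}$ contains two independent edges $a_1b_1,a_2b_2$, then for every $\ell$ with $4\le\ell\le|C_i|+|C_j|$ one stitches a sub-path of the clique $C_i$ joining $a_1,a_2$ to a sub-path of the clique $C_j$ joining $b_1,b_2$, getting a $C_\ell$; in particular a $C_m$. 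If no such pair exists, then by a covering argument between any two of the large cliques $G$ is almost complete bipartite, and an induced $K_{a,b}\subseteq G$ yields a path on at least $2\min(a,b)$ vertices; forcing this below $n$ bounds all the clique sizes, which reduces $G$ to a graph essentially equal to $3K_{n-1}$ (odd case) or $K_{n-1}\cup K_{m/2-1}\cup K_{m/2-1}$ (even case) plus the single extra vertex distinguishing $N$ from the extremal value --- and a direct check shows that this extra vertex does create $W_m$. The parity of $m$ enters exactly here: for $m$ odd the relevant $K_{n-1,n-1}$'s contribute only even cycles, which is precisely why $N=3n-2$ and not $3n-3$ is needed, mirroring the extremal construction.

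The step I expect to be the main obstacle is this last one: producing a rim of the \emph{exact} length $m$ in the near-extremal regime. Away from extremal configurations there is plenty of slack and almost any suitable pair of cliques works, but when $G$ is close to one of the extremal graphs above there is essentially no room to spare, and the bookkeeping --- which pairs of colour classes induce an almost-complete-bipartite graph in $G$, the precise sizes $|V_1|,\dots,|V_{n-1}|$, and how the extra vertex attaches --- is where the argument becomes delicate and where the two regimes $m$ odd versus $m$ even genuinely diverge. I would organise it as a case analysis driven by $|V_1|$ and $|V_2|$: a ``large $|V_1|$'' case, where $\overline G$ already contains a clique of order $\ge m$ and the rim is immediate, and an ``all classes small'' case, close to extremal, handled by the reduction above.
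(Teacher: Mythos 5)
First, a point of reference: the paper does not prove this statement. It is Theorem 4, quoted from Zhang \cite{Zhang}, so there is no in-paper proof to compare yours against; I can only assess your argument on its own terms. Your lower bounds are correct and are the standard extremal witnesses: $3K_{n-1}$ for odd $m$ (the $\overline{G}$-neighbourhood of each vertex induces the bipartite $K_{n-1,n-1}$, hence no odd rim) and $K_{n-1}\cup 2K_{m/2-1}$ for even $m$ (each $\overline{G}$-neighbourhood induces either a complete bipartite graph of circumference at most $m-2$ or a graph on fewer than $m$ vertices). The hub-finding step of your upper bound is also sound: Gallai--Roy plus the Erd\H{o}s--Gallai edge bound $e(G)\le (n-2)N/2$ with its extremal characterisation (disjoint copies of $K_{n-1}$, ruled out because $(n-1)\mid(m+n-2)$ forces $m\equiv 1\pmod{n-1}$, i.e.\ $m=2n-1$, which is odd) do yield a vertex $v$ with $|N_{\overline{G}}(v)|\ge m$.

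The genuine gap is in the rim. Your ``key mechanism'' produces $C_\ell$ for all $4\le\ell\le|C_i|+|C_j|$ only when some pair of colour classes satisfies \emph{both} $|C_i|+|C_j|\ge m$ and the existence of two independent $\overline{G}$-edges between them, and your fallback only analyses the failure of the second condition (no two independent cross-non-edges of $G$, hence a large complete bipartite subgraph of $G$, hence a bound on the \emph{smaller} of the two classes --- not on all of them). You never address the case in which the first condition fails for every pair, i.e.\ all $n-1$ colour classes of $G[S]$ are small. This case is real: $G[S]$ could be, say, a disjoint union of cliques of order about $(n-1)/2$, which is $P_n$-free and has every independent set of size $2$ or $3$, so no two classes together reach $m$ vertices; there a $C_m$ of the exact length and parity must be stitched out of many small cliques, which is a different argument entirely (and is where the published proofs spend most of their effort, via statements analogous to Lemmas 3 and 5--9 of the present paper). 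Likewise, the concluding reduction of the near-extremal configurations to ``$3K_{n-1}$ or $K_{n-1}\cup 2K_{m/2-1}$ plus one extra vertex'' is asserted rather than proved, and the bookkeeping you yourself flag as the main obstacle is precisely the content of the theorem. As it stands the upper bound is a plausible plan with correct ingredients, not a proof.
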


For the case $m\geq 2n+1$, some upper bounds and lower bounds of
$R(P_n,W_m)$ were given \cite{Salmana_Broersma,Zhang}. Furthermore,
for some $n,m$, the exact values of $R(P_n,W_m)$ were also
determined in \cite{Salmana_Broersma,Zhang}.

In this paper we will prove the following formula, which can be used
to determine all the values of $R(P_n,W_m)$ for the left case $m\geq
2n+1$.

\begin{theorem}
If $n\geq 2$ and $m\geq 2n+1$, then
$$R(P_n,W_m)=\left\{\begin{array}{ll}
(n-1)\cdot\beta+1,  &\alpha\leq\gamma;\\
\lfloor(m-1)/\beta\rfloor+m,  &\alpha>\gamma,
\end{array}\right.$$
where
$$\alpha=\frac{m-1}{n-1}, \beta=\lceil\alpha\rceil \mbox{ and }
\gamma=\frac{\beta^2}{\beta+1}.$$
\end{theorem}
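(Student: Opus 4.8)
The strategy is to prove matching lower and upper bounds. Write $N:=\max\{(n-1)\beta,\ m-1+\lfloor(m-1)/\beta\rfloor\}$; the plan is to show that $N+1$ equals both the displayed quantity and $R(P_n,W_m)$. The lower bound comes from an explicit family of disjoint-clique graphs, and the upper bound reduces ``$\overline G$ contains $W_m$'' to finding a long cycle in the complement of a $P_n$-free graph, which is fed into Theorems~1 and~2.

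For the lower bound, fix an integer $t$ and let $G_t$ be a disjoint union of $t$ cliques whose orders all lie in $[\max\{1,\nu-m+1\},\,n-1]$, where $\nu$ is the total order, with $\nu$ chosen as large as possible. Then $G_t$ has no $P_n$ (every block is a clique of order at most $n-1$), and since every clique has order at least $\nu-m+1$, every vertex of $\overline{G_t}$ has degree at most $m-1$; hence no vertex of $\overline{G_t}$ can serve as the hub of a wheel, so $\overline{G_t}\not\supseteq W_m$ and $R(P_n,W_m)\ge\nu+1$. Maximising $\nu$ over $t$ (feasibility of the clique orders being an elementary check using $\beta\lfloor(m-1)/\beta\rfloor\le m-1$ and $m\ge 2n+1$) isolates $t=\beta$, which yields $\beta K_{n-1}$ with $\nu=(n-1)\beta$, and $t=\beta+1$, which yields $\nu=m-1+\lfloor(m-1)/\beta\rfloor$; hence $R(P_n,W_m)\ge N+1$. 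Finally, an elementary manipulation of $\alpha=(m-1)/(n-1)$, $\beta=\lceil\alpha\rceil$ and $\gamma=\beta^2/(\beta+1)$ shows that $N=(n-1)\beta$ when $\alpha\le\gamma$ and $N=m-1+\lfloor(m-1)/\beta\rfloor$ when $\alpha>\gamma$, so $N+1$ is exactly the displayed formula.

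For the upper bound, let $G$ be a graph on $N+1$ vertices with no $P_n$ and suppose, for contradiction, that $\overline G$ has no $W_m$. The key reformulation is: $\overline G\supseteq W_m$ if and only if there is a vertex $v$ and a set $S$ of $m$ vertices, each a non-neighbour of $v$ in $G$, with $\overline{G[S]}$ Hamiltonian. Hence, writing $U_v:=V(G)\setminus N_G[v]$, the graph $\overline{G[U_v]}$ contains no $C_m$ for every $v$; since $G[U_v]$ has no $P_n$, Theorem~2 (with $R(P_n,C_m)=m+\lfloor n/2\rfloor-1$ because $m\ge 2n+1$) forces $|U_v|\le m+\lfloor n/2\rfloor-2$, i.e.\ $\delta(G)\ge N-m-\lfloor n/2\rfloor+2$, while the classical Erd\H{o}s--Gallai edge bound yields a vertex of $G$ of degree at most $n-2$. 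These two facts confine $N+1$, but not tightly; the sharp estimate needs the stronger information that each $\overline{G[U_v]}$ is simultaneously $C_m$-free and (again by Erd\H{o}s--Gallai) missing at most $(n-2)|U_v|/2$ edges from being complete. I would then split on the component structure of $G$: if $G$ is disconnected and some component $D$ has order at least $R(P_n,C_m)$, then $\overline D\supseteq C_m$ by Theorem~2 and any vertex outside $D$ dominates this cycle in $\overline G$, a contradiction; so we may assume every component has order at most $m+\lfloor n/2\rfloor-2$. In the remaining cases — $G$ with only a few large, roughly balanced components, or $G$ connected (hence forced by $P_n$-freeness to be tree-like) — one takes a hub $v$ of small $G$-degree, observes that $\overline G$ restricted to $U_v$ is dense (indeed a join of complements of $P_n$-free graphs), and builds a $C_m$ inside it, either by using Theorem~1 to produce a $P_{m-1}$ in one part of the join and closing it through another part, or by combining an Erd\H{o}s--Gallai circumference estimate with a pancyclicity argument for graphs that are complete minus few edges. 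Chasing the precise point at which these estimates break forces $\nu(G)\le N$, contradicting $\nu(G)=N+1$, and it is here that $\gamma=\beta^2/(\beta+1)$ and the floor functions reappear, through the competition between the disjoint-clique configurations $\beta K_{n-1}$ and the $(\beta+1)$-clique construction underlying the lower bound.

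The step I expect to be the main obstacle is the upper bound when $G$ is connected or has only two or three balanced components: there is then no single large $P_n$-free subgraph whose complement obviously contains $C_m$, so the cycle must be assembled across a join, and one has to control the exact circumference and pancyclic behaviour of complements of $P_n$-free graphs on roughly $m$ vertices. This is precisely the regime in which ``$\overline{G[U_v]}$ almost complete'' is only barely enough to guarantee ``$\overline{G[U_v]}\supseteq C_m$'', and in which the dichotomy $\alpha\le\gamma$ versus $\alpha>\gamma$ is actually decided.
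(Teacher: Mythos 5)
Your lower bound is essentially the paper's: both take a disjoint union of cliques with orders in $[\nu-m+1,\,n-1]$, observe that such a graph is $P_n$-free and that every vertex has fewer than $m$ non-neighbours, and then optimise; the paper packages the optimisation as Lemma~1, showing $t(n,m)=\min\{t:t\notin\mathcal{L}[t-m+1,n-1]\}$, and the competition you describe between $\beta K_{n-1}$ and the $(\beta+1)$-clique configuration is exactly the computation of $\max(T)$ there. Your opening moves on the upper bound also match the paper: the reformulation ``$\overline G\supseteq W_m$ iff some $\overline{G[U_v]}\supseteq C_m$'' is the paper's Claim~2, and the resulting bound $\delta(G)\ge\nu(G)-m-\lfloor n/2\rfloor+1$ via $R(P_n,C_m)=m+\lfloor n/2\rfloor-1$ is Claim~3.

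The genuine gap is everything after that. The heart of the paper is the case analysis on the connectivity of $G$ (disconnected, connectivity~$1$, $2$-connected), supported by Lemmas~4--9, and your proposal replaces it with the sentence ``chasing the precise point at which these estimates break forces $\nu(G)\le N$,'' which is the conclusion, not an argument. Two specific points where your sketch would fail as written. First, when $G$ is connected, $U_v=V(G)\setminus N[v]$ does \emph{not} in general decompose as a join of complements of $P_n$-free graphs: $G[U_v]$ can itself be connected, or disconnected with one component of order close to $\nu(G)$, in which case ``almost complete, hence pancyclic'' is unavailable and Dirac/Ore-type conditions on $\overline{G[U_v]}$ fail; the paper needs Lemma~6 (a balance condition on component orders) and, in the $2$-connected case, a longest-cycle decomposition with the technical Lemmas~7--9 to assemble the $C_m$ from an independent set on the cycle plus the off-cycle components. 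Nothing in your proposal substitutes for this. Second, your claim that the dichotomy $\alpha\le\gamma$ versus $\alpha>\gamma$ ``is actually decided'' in the hard connected case is a misreading of where the formula comes from: in the paper that dichotomy is settled entirely by the arithmetic of Lemma~1 (which of the two clique constructions is larger), and the upper-bound argument works uniformly with the single parameter $s=m+n-t$ and the bound $s\le\lfloor(n+5)/4\rfloor$ of Claim~1. So the proposal is a correct outline of the easy half plus the first two steps of the hard half, but the argument that actually produces a $W_m$ when $G$ is connected --- the bulk of the proof --- is missing.
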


Together with Theorems 4 and 5, we give a complete solution to the
problem of determining the Ramsey numbers of paths versus wheels.

\section{Preliminaries}

Before our proof we will first list one result due to Zhang
\cite{Zhang} and give some additional terminology and notation.
Second, we will prove a series of lemmas which support our proof of
the main theorem.

The following result is a rewriting of two corollaries in
\cite{Zhang}. It helps us to deal with the cases $n=3,4$ in our
proof.

\begin{theorem}[Zhang \cite{Zhang}]
If $n\geq 3$ and $m\geq 2n+1$, then
$$R(P_n,W_m)=\left\{\begin{array}{ll}
m+n-1,  & \mbox{if } m=1\mod(n-1);\\
m+n-2,  & \mbox{if } m=0,2\mod(n-1).
\end{array}\right.$$
\end{theorem}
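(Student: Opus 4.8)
I would give two constructions of a $P_n$-free graph whose complement is $W_m$-free. The first is $G_1=\beta K_{n-1}$, the disjoint union of $\beta$ copies of $K_{n-1}$, on $(n-1)\beta$ vertices: it is $P_n$-free, and $\overline{G_1}$ is the complete $\beta$-partite graph all of whose parts have order $n-1$, so a $W_m$ in $\overline{G_1}$ would have to fit its rim cycle $C_m$ into the $\beta-1$ parts missed by the hub, which span only $(n-1)(\beta-1)\le m-2$ vertices (here $\alpha>\beta-1$) --- impossible; hence $R(P_n,W_m)\ge(n-1)\beta+1$. The second: set $c:=\lfloor(m-1)/\beta\rfloor$, note $1\le c\le n-1$ (from $\alpha\le\beta$), and let $G_2$ be a disjoint union of cliques, each of order in $[c,n-1]$, with $c+m-1$ vertices in total --- a short check shows such a partition exists. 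Then $G_2$ is $P_n$-free and $\delta(G_2)=c-1=(c+m-1)-m$, so every vertex has at most $m-1$ non-neighbours, whence $\overline{G_2}$ has maximum degree below $m$ and no $W_m$; hence $R(P_n,W_m)\ge\lfloor(m-1)/\beta\rfloor+m$. A short arithmetic comparison shows the displayed quantity coincides with the larger of these two bounds in each regime, the case split falling exactly at $\alpha=\gamma$.

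\textbf{Upper bound: tools.} The cases $n=2$ (trivial) and $n=3,4$ (where $n-1\le3$, so every admissible $m$ lies in a residue class covered by Theorem 7, which one checks agrees with the formula) I would dispose of first; assume $n\ge5$. Let $G$ be $P_n$-free on $R:=R(P_n,W_m)$ vertices and suppose, for contradiction, that $\overline G$ contains no $W_m$. As lemmas in Section 2 I would record the consequences, for a $P_n$-free graph $H$ on $N'$ vertices, of $e(H)\le(n-2)\,\nu(H)/2$ (Erd\H{o}s--Gallai) and $\chi(H)\le n-1$ (Gallai--Roy): the complement $\overline H$ is then dense ($e(\overline H)\ge N'(N'-n+1)/2$), contains a clique of order $\ge N'/(n-1)$, has a triangle once $N'>2(n-1)$, and --- by the Erd\H{o}s--Gallai bound for long cycles together with the weak pancyclicity of dense non-bipartite graphs --- contains $C_\ell$ for every $3\le\ell\le N'-(n-2)$ once $N'>2(n-1)$. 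A Chv\'atal--Erd\H{o}s variant ($\overline H$ is Hamiltonian as soon as its connectivity reaches its independence number, which is the clique number of $H$ and so is at most $n-1$) I would keep in reserve for the tight range. Applied to $H=G[W_v]$ for $v$ a minimum-degree vertex and $W_v:=V(G)\setminus N_G[v]$ (to all of which $v$ is $\overline G$-adjacent), a $C_m$ inside $\overline{G[W_v]}$ would together with the hub $v$ be a forbidden $W_m$; so $G[W_v]$, and --- iterating through high-degree vertices and through the components of $G$ --- $G$ itself, is forced into a rigid shape.

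\textbf{Upper bound: the heart, and the obstacle.} That rigid shape is that $\overline G$ must be, up to a defect of bounded size, a complete multipartite graph all of whose parts have order $\le n-1$ (equivalently, $G$ is almost a disjoint union of cliques of order $\le n-1$). The plan here is to use the Erd\H{o}s--Gallai equality analysis componentwise --- observing that a non-clique component of $G$ only contributes a larger clique to $\overline G$, hence more room for a $W_m$, so it is never extremal --- and to handle the ``barely non-Hamiltonian'' complements of the Chv\'atal--Erd\H{o}s step by noting that low connectivity of $\overline{G[W_v]}$ forces a small vertex set that is $G$-complete to a large one, on which the argument recurses. Granting the reduction, $\overline G$ is such a complete multipartite graph on $R$ vertices; since $m\ge2n+1$ makes every part smaller than $\lfloor m/2\rfloor$, it contains a $W_m$ precisely when it has at least four parts and, after deleting a smallest part of order $a$, the remaining $R-a\ge m$ vertices occupy at least three parts (a complete multipartite graph with $\ge3$ parts each smaller than $\lfloor m/2\rfloor$ and at least $m$ vertices being pancyclic up to its order, the rim $C_m$ is then found). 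Optimising over the number of parts and over $a$ shows the largest order of a $W_m$-free complete multipartite graph with parts of order $\le n-1$ equals exactly $\max\{3(n-1),\,m-1+\lfloor(m-1)/\beta\rfloor\}$; comparing the two terms produces the two cases of the theorem, the floor and the threshold $\alpha=\gamma$ emerging from this count, and since $G$ has $R$ vertices the contradiction follows. The main obstacle, as is typical for exact Ramsey results, is making this structural reduction tight enough to meet the constructions exactly rather than only within $O_n(1)$: ruling out the near-extremal competitors ($\beta K_{n-1}$ with a part enlarged, split, or swapped for a spread-out $P_n$-free component; complements that are almost but not quite complete multipartite; highly connected yet narrowly non-Hamiltonian complements) and securing the cycle-existence input in the delicate range $|W_v|\approx m$, where $\overline{G[W_v]}$ is dense but not overwhelmingly so and the Chv\'atal--Erd\H{o}s route with its recursion on a complete-bipartite piece of $G$ looks the most robust.
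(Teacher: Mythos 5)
The first thing to say is that the paper does not prove this statement: it is Theorem~6, imported from Zhang \cite{Zhang} as ``a rewriting of two corollaries'' of that paper, and it is used only to dispose of the cases $n=3,4$ of the main theorem (for $n\ge 5$ the residue classes $3,\dots,n-2$ modulo $n-1$ are not covered, so it is not a complete formula anyway). There is therefore no in-paper argument to compare yours against; what you have written is really an outline of a proof of Theorem~5, the paper's main result, of which the present statement is the special case $m\equiv 0,1,2 \pmod{n-1}$ (your lower-bound constructions are correct and are instances of the paper's single construction, a disjoint union of cliques with orders in the interval $[t-m,n-1]$).

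Judged as a proof of the statement posed, your outline has two concrete defects. First, it is circular exactly where the paper relies on this theorem: you dispatch $n=3$ and $n=4$ by appealing to the residue-class result of Zhang --- i.e.\ to the very statement under proof --- and for $n=3,4$ every admissible $m$ falls under it, so for those $n$ nothing is proved. Second, and more seriously, your entire upper bound hangs on the claim that a $P_n$-free graph with $W_m$-free complement must be, ``up to a defect of bounded size,'' a disjoint union of cliques of order at most $n-1$; this is asserted via an Erd\H{o}s--Gallai equality analysis and a Chv\'atal--Erd\H{o}s variant and then flagged by you as ``the main obstacle.'' That obstacle \emph{is} the theorem: no stability statement of this kind is established (or used) anywhere in the paper, whose actual proof of the general formula is a long case analysis on the connectivity of $G$ (disconnected, connectivity $1$, $2$-connected), supported by Lemmas 2--9, each of which manufactures a $C_m$ in the complement of a carefully chosen induced subgraph --- $G-v-N(v)$ for a low-degree vertex $v$, or $G$ minus a few cut-vertices, or $G$ minus a longest cycle --- and then attaches the deleted low-degree vertex as the hub of a $W_m$. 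Until the structural reduction is carried out tightly (including the near-extremal configurations you list yourself), the upper bound remains a plan rather than a proof.
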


For integers $s,t$, the \emph{interval} $[s,t]$ is the set of
integers $i$ with $s\leq i\leq t$. Note that if $s>t$, then
$[s,t]=\emptyset$. Let $X$ be a subset of $\mathbb{N}$. We set
$\mathcal {L}(X)=\{\sum_{i=1}^kx_i: x_i\in X, k\in \mathbb{N}\}$,
and suppose $0\in\mathcal{L}(X)$ for any set $X$. Note that if $1\in
X$, then $\mathcal{L}(X)=\mathbb{N}$. For an interval $[s,t]$, we
use $\mathcal{L}[s,t]$ instead of $\mathcal{L}([s,t])$.

In the following of the paper, $n$ always denotes an integer at
least 2 and $m$ an integer at least 3. We denote by $\pa(n)$ the
parity of $n$, i.e., $\pa(n)=\lceil n/2\rceil-\lfloor n/2\rfloor$.

For integers $n,m$, let $t(n,m)$ be the values of $R(P_n,W_m)$
defined in Theorem 5, that is,
$$t(n,m)=\left\{\begin{array}{ll}
(n-1)\cdot\beta+1,  &\alpha\leq\gamma;\\
\lfloor(m-1)/\beta\rfloor+m,  &\alpha>\gamma,
\end{array}\right.$$
where
$$\alpha=\frac{m-1}{n-1}, \beta=\lceil\alpha\rceil \mbox{ and }
\gamma=\frac{\beta^2}{\beta+1}.$$

\begin{lemma}
  If $m\geq 2n+1$, then $t(n,m)=\min\{t: t\notin \mathcal {L}[t-m+1,n-1]\}$.
\end{lemma}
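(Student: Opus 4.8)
The plan is to make the set $\mathcal{L}[a,b]$ explicit. For positive integers $a\le b$, a sum of exactly $k$ elements of $[a,b]$ can be any integer in $[ka,kb]$ and nothing else, so $\mathcal{L}[a,b]=\{0\}\cup\bigcup_{k\ge 1}[ka,kb]$; and $\mathcal{L}[a,b]=\{0\}$ when $a>b$. Hence for a positive integer $t$ we have $t\in\mathcal{L}[a,b]$ if and only if $ka\le t\le kb$ for some $k\ge 1$. I will then record two trivial bounds on $t^*:=\min\{t:t\notin\mathcal{L}[t-m+1,n-1]\}$. If $t\le m$ then $1\in[t-m+1,n-1]$ (here $n\ge 2$ is used), so $\mathcal{L}[t-m+1,n-1]$ contains $t$; if $t\ge m+n-1$ then $[t-m+1,n-1]=\emptyset$ and $t\notin\mathcal{L}[t-m+1,n-1]$. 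Thus $m+1\le t^*\le m+n-1$.

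Next I substitute $a:=t-m+1$, so that for the relevant range $m+1\le t\le m+n-2$ we have $a\in[2,n-1]$ and $t=a+m-1$. Since $t>m>n-1$, no single element of $[a,n-1]$ equals $t$, so $t\in\mathcal{L}[a,n-1]$ iff $ka\le a+m-1\le k(n-1)$ for some $k\ge 2$; the first inequality is $a\le\lfloor(m-1)/(k-1)\rfloor$ and the second is $a\le k(n-1)-(m-1)$. Therefore $t\in\mathcal{L}[a,n-1]$ iff $a\le g(k)$ for some $k\ge 2$, where $g(k):=\min\{\lfloor(m-1)/(k-1)\rfloor,\ k(n-1)-(m-1)\}$. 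The first argument of this minimum is nonincreasing and the second is increasing in $k$, and the cross-over happens near $k(n-1)\approx m$, i.e.\ near $k=\beta$. I will show: $g(k)<0$ for $2\le k\le\beta-1$ (since $(\beta-1)(n-1)<m-1$, using $\beta-1<\alpha$); $g(\beta)=\beta(n-1)-(m-1)$ and $g(\beta+1)=\lfloor(m-1)/\beta\rfloor$ (each by checking which entry of the minimum is smaller, using $\beta-1<\alpha\le\beta$); and $g(k)=\lfloor(m-1)/(k-1)\rfloor$ is nonincreasing for $k\ge\beta+1$. These facts give $\max_{k\ge 2}g(k)=\max\{g(\beta),g(\beta+1)\}=:M$, where moreover $1\le M\le n-1$. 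A short argument then yields $t^*=m+M$: every $a\in\{2,\dots,M\}$ lies in $[2,n-1]$ and puts $t$ into $\mathcal{L}$, while $a=M+1$ (which is $\le n$) does not, either by the displayed criterion or because $[a,n-1]$ is empty.

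Finally I evaluate $M$, and this is where $\gamma$ enters. Clearing denominators shows that $\alpha\le\gamma$ is equivalent to $m-1\le\beta\cdot g(\beta)$, i.e.\ to $(m-1)/\beta\le g(\beta)$. So if $\alpha\le\gamma$ then $g(\beta+1)=\lfloor(m-1)/\beta\rfloor\le(m-1)/\beta\le g(\beta)$, hence $M=g(\beta)$ and $t^*=m+\beta(n-1)-(m-1)=(n-1)\beta+1=t(n,m)$; and if $\alpha>\gamma$ then $g(\beta)<(m-1)/\beta$, which since $g(\beta)\in\mathbb{Z}$ forces $g(\beta)\le\lfloor(m-1)/\beta\rfloor=g(\beta+1)$, hence $M=\lfloor(m-1)/\beta\rfloor$ and $t^*=m+\lfloor(m-1)/\beta\rfloor=t(n,m)$. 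In both cases $t^*=t(n,m)$, which is the claim.

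I expect the main obstacle to be the reduction $\max_{k\ge 2}g(k)=\max\{g(\beta),g(\beta+1)\}$ together with the closed forms $g(\beta)=\beta(n-1)-(m-1)$ and $g(\beta+1)=\lfloor(m-1)/\beta\rfloor$: these hinge on the inequalities $\beta-1<\alpha\le\beta$ and on comparing floors with non-integers, and it is precisely here that $m\ge 2n+1$ is used (it guarantees $\beta\ge 3$, so that $\beta-1\ge 1$ and $g(\beta)$ is legitimate and nonnegative). Once this is in place, matching $\gamma=\beta^2/(\beta+1)$ to the cross-over between $g(\beta)$ and $g(\beta+1)$ is routine arithmetic with the floor function.
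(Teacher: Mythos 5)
Your proof is correct, and it follows essentially the same route as the paper's: both reduce membership in $\mathcal{L}[t-m+1,n-1]$ to the existence of $k$ with $k(t-m+1)\le t\le k(n-1)$, identify $k=\beta$ and $k=\beta+1$ as the only relevant indices, and use $\gamma$ to decide which of $(n-1)\beta-(m-1)$ and $\lfloor(m-1)/\beta\rfloor$ wins. Your version merely reparametrizes by $a=t-m+1$ and spells out the monotonicity checks that the paper's terse computation of $\max(T)$ leaves implicit.
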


\begin{proof}
Set $T=\{t: t\in\mathcal{L}[t-m+1,n-1]\}$. Note that if $t\in T$,
then $t-1\in T$. So it is sufficient to prove that
$t(n,m)=\max(T)+1$.

Note that
\begin{align*}
                & t\in T \Leftrightarrow t\in\mathcal{L}[t-m+1,n-1]\\
\Leftrightarrow & t\in[k(t-m+1),k(n-1)], \mbox{ for some integer } k\\
\Leftrightarrow & t\leq\frac{k}{k-1}(m-1) \mbox{ and } t\leq k(n-1),
                    \mbox{ for some integer } k\\
\Leftrightarrow & t\leq k(n-1) \mbox{ for some integer } k<\alpha+1,
                    \mbox{ or}\\
                & t\leq\left\lfloor\frac{m-1}{k-1}\right\rfloor+m-1,
                    \mbox{ for some integer } k\geq\alpha+1.
\end{align*}
This implies that
$$T=\left\{t: t\leq k(n-1), k\leq\beta\right\}\cup\left\{t:
t\leq\left\lfloor\frac{m-1}{k-1}\right\rfloor+m-1,
k\geq\beta+1\right\}.$$
Thus
\begin{align*}
\max(T) & =\max\left\{(n-1)\beta,
            \left\lfloor\frac{m-1}{\beta}\right\rfloor+m-1\right\}\\
        & =\left\{\begin{array}{ll}
            (n-1)\cdot\beta,                &\alpha\leq\gamma;\\
            \lfloor(m-1)/\beta\rfloor+m-1,  &\alpha>\gamma.
            \end{array}\right.
\end{align*}
We conclude that $t(n,m)=\max(T)+1$.
\end{proof}

\begin{lemma}
Let $G$ be a graph on at least three vertices.
\begin{mathitem}
\item[(1)] If $G$ is 2-connected and
$\delta(G)\geq\lceil n/2\rceil$, then $G$ contains a cycle of order
at least $\min\{\nu(G),n\}$.
\item[(2)] If $x\in V(G)$, $G$ is connected and $d(v)\geq n-1$ for
every vertex $v\in V(G)\backslash\{x\}$, then $G$ contains a path
from $x$ of order at least $n$.
\item[(3)] If $x,y\in V(G)$, $G+xy$ is 2-connected and $d(v)\geq n-1$ for
every vertex $v\in V(G)\backslash\{x,y\}$, then $G$ contains a path
from $x$ to $y$ of order at least $n$.
\item[(4)] If $x,y\in V(G)$, $G+xy$ is 2-connected
and $d(v)\geq\lceil n/2\rceil$ for every vertex $v\in
V(G)\backslash\{x,y\}$, then $G$ contains a path from $x$ of order
at least $\min\{\nu(G),n\}$.
\item[(5)] If $G$ is connected and
$\delta(G)\geq\lfloor n/2\rfloor$, then $G$ contains a path of order
at least $\min\{\nu(G),n\}$.
\item[(6)] If $x\in V(G)$, $G$ is connected, and $d_{G-x}(v)\geq
n-2$ for every vertex $v\in V(G)\backslash\{x\}$, then $G$ contains
a path from $x$ of order at least $n$.
\item[(7)] If $x\in V(G)$, $G$ is 2-connected and
$d_{G-x}(v)\geq\lfloor n/2\rfloor$ for every vertex $v\in
V(G)\backslash\{x\}$, then $G$ contains a path from $x$ of order at
least $\min\{\nu(G),n\}$.
\end{mathitem}
\end{lemma}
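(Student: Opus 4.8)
The seven items split into three tiers. Items (1) and (5) are the classical lower bounds on circumference and longest-path length: every $2$-connected graph contains a cycle of order at least $\min\{\nu(G),2\delta(G)\}$, and every connected graph contains a path of order at least $\min\{\nu(G),2\delta(G)+1\}$. Since $2\lceil n/2\rceil\ge n$ and $2\lfloor n/2\rfloor+1\ge n$, the hypotheses $\delta(G)\ge\lceil n/2\rceil$ and $\delta(G)\ge\lfloor n/2\rfloor$ give (1) and (5) directly, and I would just cite these two facts.

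For (2) and (6) I would use the ``longest path with one endpoint pinned'' argument. In (2), let $P=x_0x_1\cdots x_\ell$ be a longest path with $x_0=x$; connectedness together with $\nu(G)\ge 3$ forces $\ell\ge 1$, so $x_\ell\ne x$ has $d(x_\ell)\ge n-1$, while maximality gives $N(x_\ell)\subseteq\{x_0,\dots,x_{\ell-1}\}$; hence $\ell\ge n-1$ and $|V(P)|\ge n$. For (6) it is cleaner to work away from $x$: choose a neighbour $u$ of $x$ and let $K$ be the component of $G-x$ containing $u$. Every vertex of $K$ has degree at least $n-2$ in $K$, so the same argument gives a path of $K$ from $u$ of order at least $n-1$, and prepending $x$ yields a path from $x$ of order at least $n$.

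Item (3) I would prove by an exchange argument. Suppose a longest $x$-$y$ path $P=v_0v_1\cdots v_\ell$ in $G$ — which exists because $G+xy$ is $2$-connected — has $\ell\le n-2$. Since some vertex has degree at least $n-1$, we have $\nu(G)\ge n$, so $P$ is not spanning and there is an off-path vertex $z\ne x,y$ with $d(z)\ge n-1>\ell$. Maximality of $P$ forbids $z$ from being adjacent to two consecutive vertices of $P$, and forbids $z$ together with a second off-path neighbour from bridging a consecutive pair; a short count on $|N(z)\cap V(P)|$ then produces an $x$-$y$ path longer than $P$, a contradiction, so $|V(P)|\ge n$.

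Items (4) and (7) are where the difficulty lies, and I expect them to be the main obstacle. The degree hypothesis is only of order $n/2$, so greedy extension fails and one must run an Erd\H{o}s--Gallai / P\'osa rotation argument: take a longest path $P$ with $x$ as one endpoint, rotate the other endpoint to build a large set $S$ of reachable endpoints satisfying $N(S)\subseteq V(P)$, and combine this with $2$-connectivity (of $G$ in (7), of $G+xy$ in (4)) to force $|V(P)|\ge\min\{\nu(G),n\}$ — were it smaller, an off-path vertex would have at least $\lceil n/2\rceil$ neighbours, almost all on $P$, and a disjoint path back to $P$ furnished by $2$-connectivity would let one absorb it into a longer path. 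For (7) one may alternatively pass to the connected graph $G-x$ (minimum degree at least $\lfloor n/2\rfloor$), apply (1) or (5) there, and reattach $x$; when $n$ is even this even reduces (7) to (4) by taking $y$ to be a neighbour of $x$. The delicate points are keeping $x$ pinned as an endpoint through the rotations, controlling the two degree-exceptional vertices $x,y$ in (4) (one is the fixed endpoint, the other may or may not enter $S$), and the parity — for $n$ odd the identity $\lfloor n/2\rfloor=\lceil n/2\rceil-1$ threatens to lose a vertex, so $n$ odd needs a separate argument that recovers it — with the goal of making the bound land at $\min\{\nu(G),n\}$ rather than $\min\{\nu(G),n-1\}$.
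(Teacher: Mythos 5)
Items (1), (2), (5), (6) of your proposal are fine: (1) is Dirac's theorem, which the paper also just cites; your direct longest-path argument for (2) is a valid substitute for the paper's citation of Erd\H{o}s--Gallai; (6) is word-for-word the paper's proof; and (5) is the classical path bound (the paper instead derives it from (1) by adding a dominating vertex). Item (3) is a genuine Erd\H{o}s--Gallai theorem that the paper simply cites; your exchange sketch points in the right direction but the decisive step (``a short count on $|N(z)\cap V(P)|$ then produces a longer $x$--$y$ path'') is asserted rather than carried out, so as written it is a citation in disguise.

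The real gap is in (4) and (7), which is exactly where the paper does its work. For (4) you propose a rotation argument and then list its ``delicate points'' --- keeping $x$ pinned through rotations, the two degree-exceptional vertices, and the parity loss for odd $n$ --- without resolving any of them; that is a plan, not a proof. The paper's route is much shorter: in $G'=G+xy$ the only two possibly low-degree vertices are adjacent, so every pair of nonadjacent vertices contains one of degree at least $\lceil n/2\rceil$, Fan's theorem gives a cycle of order at least $\min\{\nu(G),n\}$, and deleting the edge $xy$ from that cycle (or walking from $x$ to it) gives the required path. For (7), ``apply (1) or (5) to $G-x$ and reattach $x$'' does not work as stated: (1) requires $G-x$ to be $2$-connected, which need not hold, and a long path obtained from (5) cannot be extended through $x$ unless $x$ happens to be adjacent to one of its endpoints. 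The paper splits into cases: if $G-x$ is $2$-connected it takes a long \emph{cycle}, joins $x$ to it by a path and unrolls; if $G-x$ is separable it picks an end-block $B$ with cut-vertex $b$, uses $2$-connectivity of $G$ to find a neighbour $x'$ of $x$ in $B-b$, and concatenates a path of order at least $\lfloor n/2\rfloor+1$ from $x'$ to $b$ inside $B$ (via (3)) with another such path from $b$ in the rest of the graph (via (2)). Your reduction of (7) to (4) is valid only for even $n$, and you explicitly leave the odd case open. These two items need to be completed before the lemma can be considered proved.
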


\begin{proof}
The assertions (1), (2) and (3) are results of Dirac \cite{Dirac},
Erd\"os and Gallai \cite{Erdos_Gallai}, respectively. Now we prove
the other assertions.

(4) Let $G'=G+xy$. Since every two nonadjacent vertices of $G'$
contain one with degree at least $\lceil n/2\rceil$, by Fan's
Theorem \cite{Fan}, $G'$ contains a cycle $C$ with order at least
$\min\{\nu(G),n\}$. If $C$ does not contain the added edge $xy$,
then $C$ is a cycle of $G$ and $G$ contains a path from $x$ of order
at least $\min\{\nu(G),n\}$; if $C$ contains the added edge $xy$,
then $P=C-xy$ is a path of $G$ from $x$ of order at least
$\min\{\nu(G),n\}$.

(5) We add a new vertex $x$ and join $x$ to every vertex of $G$. We
denote the resulting graph as $G'$. Thus every vertex in $V(G')$ has
degree at least $\lfloor n/2\rfloor+1=\lceil(n+1)/2\rceil$. By (1),
$G'$ contains a cycle of order at least $\min\{\nu(G'),n+1\}$, and
$G$ contains a path of order at least $\min\{\nu(G),n\}$.

(6) Let $H$ be a component of $G-x$, and let $x'$ be a neighbor of
$x$ in $H$. Note that every vertex in $H$ has degree at least $n-2$
in $H$. By (2), $H$ contains a path $P_1$ from $x'$ of order at
least $n-1$. Thus $P=xx'P_1$ is a path from $x$ of order at least
$n$ in $G$.

(7) Let $G'=G-x$. If $G'$ contains a vertex with degree 1, then
$n\leq 3$ and the assertion is trivially true. Now we assume that
$\delta(G')\geq 2$.

We first assume that $G'$ is 2-connected. By (1), $G'$ contains a
cycle $C$ of order at least $\min\{\nu(G'),n-\pa(n)\}$. Let $P_1$ be
a path from $x$ to $C$, let $x'$ be the end-vertex of $P_1$ on $C$,
and let $x''$ be a neighbor of $x'$ on $C$. Then $P=P_1\cup C-x'x''$
is a path from $x$ of order at least $\min\{\nu(G),n\}$.

Now we assume that $G'$ is separable. Then every end-block of $G'$
is 2-connected. Let $B$ be an end-block of $G'$, and $b$ be the
cut-vertex of $G'$ contained in $B$. Since $G$ is 2-connected, $x$
is adjacent to some vertex, say $x'$, in $B-b$. By (3), $B$ contains
a path $P_1$ from $x'$ to $b$ of order at least $\lfloor
n/2\rfloor+1$, and by (2), $H-(B-b)$ contains a path $P_2$ from $b$
of order at least $\lfloor n/2\rfloor+1$. Thus $P=xx'P_1bP_2$ is a
path from $x$ of order at least $n$.
\end{proof}

\begin{lemma}
If $G$ is a disconnected graph such that
\begin{mathitem}
\item[(1)] $m\leq \nu(G)$; and
\item[(2)] every component of $G$ has order at most $\lfloor m/2\rfloor$,
\end{mathitem}
then $\overline{G}$ contains a $C_m$.
\end{lemma}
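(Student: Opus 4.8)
The plan is to pass to the complement and recognize the relevant subgraph as a sufficiently balanced complete multipartite graph. Let $H_1,\dots,H_k$ be the components of $G$; since $G$ is disconnected, $k\geq 2$. In $\overline{G}$ any two vertices lying in distinct components of $G$ are adjacent. Hence, if we select $b_i$ vertices from $H_i$ for each $i$ and let $S$ be the union of the chosen sets, then the subgraph of $\overline{G}$ induced on $S$ contains, as a spanning subgraph, the complete multipartite graph whose parts are the chosen sets; in particular, any Hamilton cycle of that complete multipartite graph is a cycle of $\overline{G}$ on $|S|$ vertices.

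First I would choose the numbers $b_i$. Since $\sum_{i=1}^{k}|H_i|=\nu(G)\geq m$, there exist integers $0\leq b_i\leq|H_i|$ with $\sum_{i=1}^{k}b_i=m$: start from $b_i=|H_i|$ and decrease the values one unit at a time until the total drops to $m$. By hypothesis~(2), $b_i\leq|H_i|\leq\lfloor m/2\rfloor$ for every $i$, and since $\lfloor m/2\rfloor<m$, at least two of the $b_i$ are positive. Fixing sets $S_i\subseteq V(H_i)$ with $|S_i|=b_i$ and putting $S=\bigcup_i S_i$, we get $|S|=m$, and the complete multipartite graph $F$ with parts the nonempty sets among $S_1,\dots,S_k$ has $m$ vertices, at least two parts, and every part of size at most $\lfloor m/2\rfloor$.

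It remains to show that $F$ has a Hamilton cycle, which by the first paragraph yields a $C_m$ in $\overline{G}$. This is the one step needing a genuine argument: a complete multipartite graph on $m\geq 3$ vertices with at least two parts is Hamiltonian provided each part has size at most $\lfloor m/2\rfloor$. I would prove this directly. List the vertices of $F$ as $w_1,\dots,w_m$ by writing out one part, then the next, and so on, so each part occupies a contiguous block of length at most $\lfloor m/2\rfloor$. Now build a cyclic sequence of length $m$ by placing $w_1,\dots,w_{\lceil m/2\rceil}$ on the odd positions and $w_{\lceil m/2\rceil+1},\dots,w_m$ on the even positions. Any two cyclically consecutive positions then hold vertices $w_i,w_j$ whose index gap is at least $\lceil m/2\rceil-1$ (the exact value depending on the parity of $m$ and on whether the pair straddles the wrap-around). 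A short case analysis shows these two vertices always lie in different parts: if they lay in the same part, the corresponding block would have length more than $\lfloor m/2\rfloor$, except in the delicate case where $m$ is even and that block has length exactly $m/2$, which would force two full blocks of size $m/2$ together with at least one further vertex, i.e.\ more than $m$ vertices in total. Hence consecutive vertices are adjacent in $F$, and the cyclic sequence is a Hamilton cycle.

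The main obstacle is precisely this last combinatorial fact about Hamilton cycles in complete multipartite graphs, and inside it the borderline even case with a part of size $m/2$; the remaining steps (choosing the $b_i$, identifying the induced subgraph) are routine. Alternatively, one can simply cite the classical characterization of Hamiltonian complete multipartite graphs and omit the explicit construction.
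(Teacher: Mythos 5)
Your overall strategy is sound and, up to the last step, runs parallel to the paper's: the paper simply takes any induced subgraph $G'$ of $G$ on $m$ vertices, observes that each vertex lies in a component of order at most $\lfloor m/2\rfloor$ and hence has degree at least $m-\lfloor m/2\rfloor=\lceil m/2\rceil$ in $\overline{G'}$, and invokes Dirac's theorem (Lemma 2) to get a Hamilton cycle of $\overline{G'}$, i.e.\ a $C_m$. Your reduction to a balanced complete multipartite graph $F$ is a correct repackaging of the same situation, but note that it makes the final step harder than it needs to be: every vertex of $F$ has degree at least $m-\lfloor m/2\rfloor=\lceil m/2\rceil\geq m/2$, so Dirac's theorem already gives the Hamilton cycle and no explicit construction is required.

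The genuine problem is that the explicit construction you give in its place is wrong as stated. Take $m=6$ with parts listed in the order $\{w_1\},\{w_2,w_3,w_4\},\{w_5,w_6\}$ (all of size at most $\lfloor m/2\rfloor=3$). Your interleaving produces the cyclic sequence $w_1,w_4,w_2,w_5,w_3,w_6$, whose consecutive pair $(w_4,w_2)$ lies entirely inside the part $\{w_2,w_3,w_4\}$, so it is not an edge of $F$. Your case analysis for the borderline case is the culprit: a single block of size exactly $m/2$ starting at an index $j+1\geq 2$ does \emph{not} force two blocks of size $m/2$ plus an extra vertex; in the example above there is only one such block and the total is exactly $m$. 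The construction can be repaired by insisting that the parts be written out in non-increasing order of size (then a block of size $m/2$ beginning at index $\geq 2$ would force the first block to be both of size $\geq m/2$ and of size $\leq m/2-1$, a contradiction), or one can discard the construction entirely and apply Dirac's theorem to $F$ as above. As written, however, the proof of the key Hamiltonicity claim fails.
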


\begin{proof}
Let $G'$ be an induced subgraph of $G$ with order $m$. Clearly every
component of $G'$ has order at most $\lfloor m/2\rfloor$. Thus every
vertex of $G'$ has degree at least $\lceil m/2\rceil$ in
$\overline{G'}$. By Lemma 2, $\overline{G'}$ contains a $C_m$.
\end{proof}

\begin{lemma}
Let $G$ be a graph.
\begin{mathitem}
\item[(1)] If $n\leq\nu(G)\leq\lfloor3n/2\rfloor-2$ and $G$ contains no
$P_n$, then $\overline{G}$ contains a path of order $2\nu(G)+3-2n$.
\item[(2)] If $\nu(G)\geq\lfloor3n/2\rfloor-1$ and $G$ contains no $P_n$,
then $\overline{G}$ contains a path of order $\nu(G)+1-\lfloor
n/2\rfloor$.
\item[(3)] If $n\geq 4$ is even, $\nu(G)\geq3n/2-1$, and $G$ contains no
$C_n$ then $\overline{G}$ contains a path of order $\nu(G)+1-n/2$.
\end{mathitem}
\end{lemma}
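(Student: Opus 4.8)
The plan is to obtain all three statements as short consequences of the two classical formulas already recalled, namely the Gerencs\'er and Gy\'arf\'as formula for $R(P_a,P_b)$ (Theorem 1) and the Faudree et al. formula for $R(P_a,C_b)$ (Theorem 2), together with the symmetry $R(G_1,G_2)=R(G_2,G_1)$. Throughout write $\nu=\nu(G)$.

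For part (1) I would set $k=2\nu+3-2n$. From $n\le\nu\le\lfloor 3n/2\rfloor-2$ one checks immediately that $3\le k\le n$, so $P_k$ is the shorter of the two paths $P_k,P_n$; since $k$ is odd, $\lfloor k/2\rfloor=\nu+1-n$, and hence Theorem 1 gives $R(P_n,P_k)=n+\lfloor k/2\rfloor-1=\nu$. As $G$ has exactly $\nu=R(P_n,P_k)$ vertices and contains no $P_n$, the defining property of the Ramsey number forces $\overline G$ to contain $P_k$, i.e.\ a path of order $2\nu+3-2n$, as claimed.

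Parts (2) and (3) go the same way, only with the roles of ``short'' and ``long'' interchanged. For (2), put $k=\nu+1-\lfloor n/2\rfloor$; the hypothesis $\nu\ge\lfloor 3n/2\rfloor-1$ is exactly what makes $k\ge n$, so Theorem 1 gives $R(P_n,P_k)=k+\lfloor n/2\rfloor-1=\nu$, and since $G$ has $\nu$ vertices and no $P_n$ we get $P_k\subseteq\overline G$, a path of order $\nu+1-\lfloor n/2\rfloor$. For (3), put $k=\nu+1-n/2$; from $\nu\ge 3n/2-1$ we have $k\ge n\ge 4$, and since $n$ is even Theorem 2 (the branch ``$n\ge m$ and $m$ even'', applied with cycle length $n$ in place of $m$ and path $P_k$ in place of $P_n$) gives $R(P_k,C_n)=k+n/2-1=\nu$; as $G$ has $\nu$ vertices and no $C_n$, again $P_k\subseteq\overline G$.

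I do not expect a genuine obstacle here: the argument is pure bookkeeping with two known formulas. The only points needing care are (a) identifying in each case which of the two graphs is the shorter path, so that the correct branch of the relevant formula is invoked; (b) verifying the handful of inequalities among $n$, $k$ and $\nu$ forced by the stated ranges, in particular $k\le n$ in (1) and $k\ge n$ in (2) and (3); and (c) observing that when $\nu$ lies strictly above the lower endpoint of its admissible range the computed Ramsey number is still exactly $\nu$, so one is entitled to use all $\nu$ vertices of $G$ rather than passing to an induced subgraph.
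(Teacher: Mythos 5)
Your proposal is correct and is exactly the argument the paper intends: the paper's own proof of this lemma is the one-line remark that it ``can be deduced by Theorems 1 and 2,'' and your computation of $R(P_n,P_k)=\nu$ (resp. $R(P_k,C_n)=\nu$) for the appropriate $k$ in each case, followed by the definition of the Ramsey number applied to $G$ (or to $\overline{G}$ via symmetry), is the standard way to fill in those details. The range checks you perform ($k\le n$ in (1), $k\ge n$ in (2) and (3)) are the right ones and all go through.
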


\begin{proof}
The lemma can be deduced by Theorems 1 and 2.
\end{proof}

\begin{lemma}
Let $G_1$ and $G_2$ be two disjoint graphs. If
\begin{mathitem}
\item[(1)] $\overline{G_1}$ contains a path of order $p\geq 2$; and
\item[(2)] $m\leq\min\{2\nu(G_1),\nu(G_1)+\nu(G_2),p+2\nu(G_2)-1\}$,
\end{mathitem}
then $\overline{G_1\cup G_2}$ contains a $C_m$.
\end{lemma}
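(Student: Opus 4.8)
The plan is to build the desired $C_m$ inside $\overline{G_1\cup G_2}$ by exploiting the fact that in $\overline{G_1\cup G_2}$ every vertex of $G_1$ is joined to every vertex of $G_2$. Write $n_i=\nu(G_i)$ (so $n_2\ge 1$ and $p\le n_1$), and let $P=v_1v_2\cdots v_p$ be the path of order $p$ in $\overline{G_1}$ guaranteed by (1). I will produce the cycle as a cyclic arrangement of $b$ pairwise disjoint nonempty \emph{blocks} $R_1,\dots,R_b$ of vertices of $G_1$ interleaved with $b$ distinct vertices $u_1,\dots,u_b$ of $G_2$, in the pattern $R_1u_1R_2u_2\cdots R_bu_b$ (each $R_j$ traversed as a path, then closing back to the start of $R_1$). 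If every block is either a sub-path of $P$ or a single vertex of $G_1$, then all edges inside the blocks lie in $\overline{G_1}$, every $R_j$–$u_j$ and $u_j$–$R_{j+1}$ step joins a $G_1$-vertex to a $G_2$-vertex, and the arrangement is automatically a cycle of $\overline{G_1\cup G_2}$ on $a+b$ vertices, where $a=\sum_j|R_j|$. So it suffices to choose $a,b$ with $a+b=m$ for which such blocks can be realized (when $b=1$ note $a=m-1\ge 2$, so we still get a genuine cycle).

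Realizing such blocks in $G_1$ is possible whenever $1\le b\le a\le n_1$ and $a-b\le p-1$: if $a\le p$ we take all blocks inside the first $a$ vertices of $P$ (say one block $v_1\cdots v_{a-b+1}$ and $b-1$ singletons), and if $a>p$ we use all of $P$, split into $c=p-(a-b)\ge 1$ consecutive sub-paths, together with $a-p$ singleton blocks drawn from the $n_1-p\ (\ge a-p)$ vertices of $G_1$ outside $P$; we also need $b\le n_2$ so that $u_1,\dots,u_b$ exist. Substituting $a=m-b$, all of this becomes the requirement that there be an integer $b$ with $\max\{1,\,m-n_1,\,\lceil(m-p+1)/2\rceil\}\le b\le\min\{\lfloor m/2\rfloor,\,n_2\}$, and the nonemptiness of this range is exactly what hypothesis (2) delivers: $m\le 2n_1$ gives $m-n_1\le\lfloor m/2\rfloor$; $m\le n_1+n_2$ gives $m-n_1\le n_2$; $m\le p+2n_2-1$ gives $\lceil(m-p+1)/2\rceil\le n_2$; and $p\ge 2$ gives $\lceil(m-p+1)/2\rceil\le\lfloor m/2\rfloor$ (the remaining comparisons against $1$ being trivial).

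With such a $b$ fixed and $a=m-b$, the explicit construction is then immediate: pick distinct $u_1,\dots,u_b\in V(G_2)$; if $a\le p$ set $R_1=v_1\cdots v_{a-b+1}$ and $R_j=\{v_{a-b+j}\}$ for $2\le j\le b$, while if $a>p$ set $c=p-(a-b)$, $R_1=v_1\cdots v_{p-c+1}$, $R_j=\{v_{p-c+j}\}$ for $2\le j\le c$, and let $R_{c+1},\dots,R_b$ be $a-p$ distinct vertices of $G_1$ not on $P$; closing up $R_1u_1R_2u_2\cdots R_bu_b$ gives the desired $C_m$. I expect the only real work to lie in the inequality bookkeeping of the middle step — checking that the three bounds in (2) combine to keep the interval for $b$ nonempty — together with making sure the degenerate ranges ($p=2$, $b=1$, or $a=p$) still leave every block nonempty and $c\ge 1$; the geometry of the embedding itself is routine.
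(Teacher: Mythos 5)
Your proof is correct and is essentially the same construction as the paper's: both build the $C_m$ by interleaving vertices of $G_2$ with segments of the path in $\overline{G_1}$ (together with singleton vertices of $G_1$), relying on the fact that every $G_1$--$G_2$ pair is adjacent in $\overline{G_1\cup G_2}$. The paper simply splits into the two cases $\nu(G_2)\geq\lfloor m/2\rfloor$ and $\nu(G_2)\leq\lfloor m/2\rfloor-1$ and exhibits one explicit cycle in each, whereas you parametrize by the number $b$ of $G_2$-vertices used and verify that hypothesis (2) keeps the admissible range for $b$ nonempty --- a uniform repackaging of the same idea.
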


\begin{proof}
We first assume that $\nu(G_2)\geq\lfloor m/2\rfloor$. If $m$ is
even, then $\nu(G_1)\geq m/2$ and $\nu(G_2)\geq m/2$. Let
$x_1,x_2,\ldots,x_k$ be $k=m/2$ vertices in $G_1$, and let
$y_1,y_2,\ldots,y_k$ be $k$ vertices in $G_2$. Then
$C=x_1y_1x_2y_2\cdots x_ky_kx_1$ is a $C_m$ in $\overline{G_1\cup
G_2}$. If $m$ is odd, then then $\nu(G_1)\geq(m+1)/2$ and
$\nu(G_2)\geq(m-1)/2$. Note that $G_1$ has two nonadjacent vertices.
Let $x_1,x_2,\ldots,x_k$ be $k=(m+1)/2$ vertices in $G_1$ such that
$x_1x_k\notin E(G_1)$, and let $y_1,y_2,\ldots,y_{k-1}$ be $k-1$
vertices in $G_2$. Then $C=x_1y_1x_2y_2\cdots x_{k-1}y_{k-1}x_kx_1$
is a $C_m$ in $\overline{G_1\cup G_2}$.

Now we assume that $\nu(G_2)\leq\lfloor m/2\rfloor-1$. Let
$V(G_2)=\{y_1,y_2,\cdots,y_k\}$, where $k=\nu(G_2)$. Since $2\leq
m+1-2k\leq p$, $\overline{G_1}$ contains a path $P$ of order
$m+1-2k$. Let $s,t$ be the two end-vertices of $P$. Note that
$\nu(G_1)-\nu(P)\geq m-k-m-1+2k=k-1$. Let $x_1,x_2,\ldots,x_{k-1}$
be $k-1$ vertices in $V(G_1-P)$. Then $C=sy_1x_1y_2x_2\cdots
x_{k-1}y_ktP$ is a $C_m$ in $\overline{G_1\cup G_2}$.
\end{proof}

\begin{lemma}
Suppose $m\geq 2n+1$. Let $G$ be a disconnected graph containing no
$P_n$. If
\begin{mathitem}
\item[(1)] $m\leq \nu(G)$; and
\item[(2)] the order sum of every $\omega(G)-1$ components in $G$ is at least
$m+\lfloor n/2\rfloor-\nu(G)$,
\end{mathitem}
then $\overline{G}$ contains a $C_m$.
\end{lemma}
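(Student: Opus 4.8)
The plan is to combine Lemmas 3, 4 and 5, the guiding idea being to split $G$ into one largest component and the union of all the others.

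First I would dispose of an easy case: if every component of $G$ has order at most $\lfloor m/2\rfloor$, then hypothesis (1) together with Lemma 3 immediately produces a $C_m$ in $\overline{G}$. So from now on I may assume that some component of $G$ has order larger than $\lfloor m/2\rfloor$; let $H$ be a largest component and put $h:=\nu(H)\ge\lfloor m/2\rfloor+1$. Since $m\ge 2n+1$ this forces $h\ge n+1$, and since $G$ has no $P_n$, neither does $H$; also $2h\ge 2\lfloor m/2\rfloor+2\ge m+1$.

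Next I would set $G_1=H$ and $G_2=G-H$, which are disjoint with $G_1\cup G_2=G$, and aim to apply Lemma 5. Two of the three numerical conditions in Lemma 5(2) are immediate: $\nu(G_1)+\nu(G_2)=\nu(G)\ge m$ by hypothesis (1), and $2\nu(G_1)=2h\ge m$. For the path needed in Lemma 5(1), I would feed $H$ into Lemma 4: because $h\ge n$ and $H$ has no $P_n$, Lemma 4 provides a path in $\overline{H}$ of order $p=2h+3-2n$ when $n\le h\le\lfloor 3n/2\rfloor-2$, and of order $p=h+1-\lfloor n/2\rfloor$ when $h\ge\lfloor 3n/2\rfloor-1$; in both regimes $p\ge 2$.

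It then remains to verify the third inequality $p+2\nu(G_2)-1\ge m$, with $\nu(G_2)=\nu(G)-h$. In the first regime the left side equals $2(\nu(G)-n+1)$, and this is $\ge m$ since $\nu(G)\ge m\ge 2n+1$ (it is enough that $m\ge 2n-2$). In the second regime the left side equals $2\nu(G)-h-\lfloor n/2\rfloor$, so the required inequality is equivalent to $\nu(G)-h\ge m+\lfloor n/2\rfloor-\nu(G)$; but $\nu(G)-h$ is exactly the order sum of the remaining $\omega(G)-1$ components of $G$, so this is precisely hypothesis (2). Hence all hypotheses of Lemma 5 hold and $\overline{G}=\overline{G_1\cup G_2}$ contains a $C_m$. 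The one genuinely delicate point is this last step: one must notice that taking $G_1$ to be a single largest component is exactly what makes the third condition of Lemma 5 collapse onto hypothesis (2) in the hard regime, whereas the bookkeeping of which bound of Lemma 4 applies and the verification that $h\ge n$ are routine.
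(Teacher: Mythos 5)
Your proposal is correct and follows essentially the same route as the paper's own proof: dispose of the case where all components have order at most $\lfloor m/2\rfloor$ via Lemma 3, then split off one large component $H$, extract a path in $\overline{H}$ via Lemma 4 according to whether $\nu(H)\leq\lfloor 3n/2\rfloor-2$ or not, and verify the three conditions of Lemma 5, with hypothesis (2) supplying exactly the inequality needed in the second regime. The only cosmetic difference is that you take $H$ to be a largest component while the paper takes any component of order at least $\lfloor m/2\rfloor+1$, which changes nothing.
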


\begin{proof}
If every component of $G$ has order at most $\lfloor m/2\rfloor$,
then we are done by Lemma 3. Now we assume that there is a component
$H$ with order at least $\lfloor m/2\rfloor+1$.

Let $G_1=H$, and $G_2=G-H$. Note that $m\leq 2\nu(G_1)$,
$m\leq\nu(G)=\nu(G_1)+\nu(G_2)$ and $\nu(G_2)\geq m+\lfloor
n/2\rfloor-\nu(G)$.

Note that $\nu(G_1)\geq\lfloor m/2\rfloor+1\geq n$. If
$\nu(G_1)\leq\lfloor3n/2\rfloor-2$, then by Lemma 4,
$\overline{G_1}$ contains a path of order $p=2\nu(G_1)+3-2n$. Since
\begin{align*}
        & p+2\nu(G_2)-1=2\nu(G_1)+3-2n+2\nu(G_2)-1\\
=       & 2\nu(G)+2-2n\geq 2m+2-2n\geq m,
\end{align*}
by Lemma 5, $\overline{G}$ contains a $C_m$. If
$\nu(G_1)\geq\lfloor3n/2\rfloor-1$, then by Lemma 4,
$\overline{G_1}$ contains a path of order $p=\nu(G_1)+1-\lfloor
n/2\rfloor$. Since
\begin{align*}
    & p+2\nu(G_2)-1=\nu(G_1)+1-\left\lfloor\frac{n}{2}\right\rfloor+2\nu(G_2)-1\\
=   & \nu(G)+\nu(G_2)-\left\lfloor\frac{n}{2}\right\rfloor
        \geq\nu(G)+m+\left\lfloor\frac{n}{2}\right\rfloor-\nu(G')
        -\left\lfloor\frac{n}{2}\right\rfloor=m,
\end{align*}
by Lemma 5, $\overline{G}$ contains a $C_m$.
\end{proof}

\begin{lemma}
Let $G$ be a graph, $X$ an independent set of $G$, $R=G-X$. If
\begin{mathitem}
\item[(1)] $|X|\geq 3$;
\item[(2)] every component of $R$ is joined to at most one vertex in $X$;
\item[(3)] $\overline{R}$ contains a path of order $p\geq 2$; and
\item[(4)] $m\leq\min\{\nu(G),p+2|X|-3\}$,
\end{mathitem}
then $\overline{G}$ contains a $C_m$.
\end{lemma}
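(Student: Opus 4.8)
The plan is to build a copy of $C_m$ inside $\overline{G}$ by hand. First I would translate the hypotheses into the complement. By (1), $X$ is a clique of $\overline G$ with $|X|\ge 3$; by (2), each $v\in V(R)$ is non-adjacent in $\overline G$ to \emph{at most one} vertex of $X$, namely the unique vertex of $X$ joined to the component of $R$ containing $v$ (if any), so $v$ is adjacent in $\overline G$ to at least $|X|-1$ vertices of $X$; and by (3), $\overline R\subseteq\overline G$ contains a path $P=v_1v_2\cdots v_p$. The goal then becomes: attach to $P$ a path $Q$ from $v_p$ to $v_1$ in $\overline G$ whose internal vertices lie in $X\cup(V(R)\setminus V(P))$ and with $\nu(Q)=m-p+2$; the union $P\cup Q$ is a $C_m$. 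Condition (4) supplies both the size bound $m-p\le 2|X|-3$ and enough spare vertices: $|V(R)\setminus V(P)|=\nu(G)-|X|-p\ge m-|X|-p$.

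If $m-p\le|X|$ (the easy regime, which in particular always holds when $|X|=3$), I would route $Q$ through the clique: take $Q=v_p\,x_1x_2\cdots x_{m-p}\,v_1$ with $x_1,\dots,x_{m-p}$ distinct vertices of $X$, choosing $x_1$ adjacent in $\overline G$ to $v_p$ and $x_{m-p}$ adjacent in $\overline G$ to $v_1$. This is possible because each of $v_p,v_1$ forbids at most one vertex of $X$ and $|X|\ge 3$ (with trivial adjustments when $m-p\le 1$ — shorten $P$ by one vertex and route $Q$ through a single vertex of $X$ — and when $p>m$ — truncate $P$ to order $m$ and close it up through at most one vertex of $X$).

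The substantive case is $m-p>|X|$; then $|X|\ge 4$ and $j:=m-p-|X|$ satisfies $1\le j\le|X|-3$, and there are at least $j$ vertices in $V(R)\setminus V(P)$. Pick $W\subseteq V(R)\setminus V(P)$ with $|W|=j$ and distinct $g_p,g_1\in X$ with $g_p$ adjacent to $v_p$ and $g_1$ adjacent to $v_1$ in $\overline G$; let $Q$ be $v_p$, followed by a Hamilton path of $\overline G[X\cup W]$ from $g_p$ to $g_1$, followed by $v_1$, so that $\nu(Q)=|X|+j+2=m-p+2$ as required. Everything thus reduces to producing this Hamilton path. I would write the vertices of $X$ along a path $g_p,\,x_{\sigma(1)},\dots,x_{\sigma(|X|-2)},\,g_1$ and insert the $j$ vertices of $W$ into the $|X|-1$ gaps between consecutive $X$-vertices, one per gap: a vertex $w\in W$ seated between $X$-vertices $a,b$ is legal exactly when neither $a$ nor $b$ is the single vertex of $X$ forbidden to $w$. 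Since $j\le|X|-3$, at least two of the $|X|-1$ gaps stay empty; this slack is precisely what allows the forbidden vertices to be seated without a clash. The worst case is when many vertices of $W$ share the same forbidden vertex $\beta$: then $\beta$ is placed at an interior position, which blocks only its two incident gaps, leaving $|X|-3\ge j$ free gaps; in the general case one runs a short Hall-type argument on the bipartite incidence between the $W$-vertices and the available gaps, using that any single position blocks at most two gaps (and choosing $g_p,g_1,W$ and the placement of the one or two most frequent forbidden vertices coherently).

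The main obstacle is exactly this last step — embedding the $R\setminus P$-vertices into the clique-path on $X$ while respecting the at-most-one-forbidden-vertex constraints, especially under heavy coincidence of forbidden vertices. This is where the exact inequality $m\le p+2|X|-3$, i.e. $j\le|X|-3$, is used; once it is in place, the rest is bookkeeping together with the handful of degenerate checks ($m-p\le 1$, $p>m$, $|X|=3$).
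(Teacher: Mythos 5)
Your proof is correct, but it diverges from the paper's at the decisive point. The paper takes $P$ to be a \emph{longest} path of $\overline{R}$ (not merely one of order $p$) and exploits maximality: if $U=V(R)\setminus V(P)\neq\emptyset$, then the endpoints $s,t$ of $P$ must be adjacent in $R$ to every vertex of $U$, so $U\cup\{s,t\}$ lies in a single component of $R$ and is therefore joined to at most \emph{one} vertex $y$ of $X$. After deleting $y$ there are no forbidden pairs left at all, and the closing path is written down explicitly by alternating vertices of $X\setminus\{y\}$ with leftover vertices (with a parity split on $m-\nu(P)$). You instead work with an arbitrary path of order $p$, so the leftover vertices of $R$ may carry many distinct forbidden vertices of $X$, and you absorb this with the gap-insertion step. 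That step does go through: each $w\in W$ is barred from at most the two gaps incident to the position of its unique forbidden vertex, hence has at least $|X|-3\ge j=|W|$ admissible gaps, so even a greedy assignment succeeds and no real Hall analysis is needed. Your version is somewhat more robust (it never uses maximality of $P$, and would survive relaxing hypothesis (2) to ``joined to at most $k$ vertices of $X$'' with the bound in (4) adjusted), while the paper's is more explicit and avoids matching language entirely. One small repair in your degenerate case $p\ge m$: truncate $P$ to order $m-1$ (not $m$) before closing it through a single vertex of $X$ nonadjacent in $G$ to both endpoints; closing a path of order $m$ through a vertex of $X$ would produce a $C_{m+1}$, and the edge between the two endpoints of a path of order $m$ is not guaranteed to lie in $\overline{G}$.
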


\begin{proof}
Let $P$ be a path in $\overline{R}$ with the largest order. Clearly
$\nu(P)\geq p$.

If $\nu(P)\geq m-1$, then let $P'$ be a subpath of $P$ of order
$m-1$. Let $s,t$ be the two end-vertices of $P'$. Since each of $s$
and $t$ is adjacent to at most one vertex in $X$ and $|X|\geq 3$,
there is a vertex $x$ in $X$ nonadjacent to both $s$ and $t$. Thus
$C=sxtP'$ is a $C_m$ in $\overline{G}$. Now we assume that
$\nu(P)\leq m-2$.

Let $s,t$ be the two end-vertices of $P$. If $P$ contains all
vertices in $R$, then $\nu(P)=\nu(R)$. Let $x$ be a vertex in $X$
nonadjacent to $s$, and $x'$ be a vertex in $X\backslash\{x\}$
nonadjacent to $t$. Note that $|X|=\nu(G)-\nu(R)\geq m-\nu(P)$. Let
$x_1,x_2,\ldots,x_k$ be $k=m-\nu(P)$ vertices in $X$ such that
$x_1=x$ and $x_k=x'$, then $C=sx_1x_2\cdots x_ktP$ is a $C_m$ in
$\overline{G}$. Now we assume that $V(R)\backslash
V(P)\neq\emptyset$.

Let $U=V(R-P)$. Note that each of $s,t$ is adjacent to every vertex
in $U$, and this implies that $U\cup\{s,t\}$ is contained in a
component of $R$. Thus $U\cup\{s,t\}$ is joined to at most one
vertex in $X$. Let $y$ be the vertex in $X$ that is joined to
$U\cup\{s,t\}$. If such a vertex does not exist, then let $y$ be any
one vertex in $X$.

Note that $m-\nu(P)\leq m-p\leq 2|X|-3$. If $m-p$ is odd, then
$|X|\geq(m-\nu(P)+1)/2+1$. Let $x_1,x_2,\ldots,x_k$ be
$k=(m-\nu(P)+1)/2$ vertices in $X\backslash\{y\}$, and let
$u_1,\ldots,u_{k-1}$ be $k-1$ vertices in $U\cup
X\backslash\{x_1,x_2,\ldots,x_k\}$. Then $C=sx_1u_1x_2u_2\cdots
x_{k-1}u_{k-1}x_ktP$ is a $C_m$ in $\overline{G}$. If $m-\nu(P)$ is
even, then $m-\nu(P)\leq 2|X|-4$ and $|X|\geq(m-\nu(P))/2+2$. Let
$x_1,x_2,\ldots,x_k$ be $k=(m-\nu(P))/2+1$ vertices in
$X\backslash\{y\}$, and let $u_1,\ldots,u_{k-2}$ be $k-2$ vertices
in $U\cup X\backslash\{x_1,x_2,\ldots,x_k\}$. Then
$C=sx_1u_1x_2u_2\cdots x_{k-2}u_{k-2}$ $x_{k-1}x_ktP$ is a $C_m$ in
$\overline{G}$.
\end{proof}

\begin{lemma}
Let $G$ be a graph, $X_1,X_2$ two independent sets of $G$ (possibly
joint), $X=X_1\cup X_2$, $R=G-X$. If
\begin{mathitem}
\item[(1)] $|X_1|=|X_2|\geq 3$, $|X_1\backslash
X_2|=|X_2\backslash X_1|\geq 2$;
\item[(2)] every component of $R$ is joined to at most one vertex in $X_i$, $i=1,2$;
\item[(3)] $\overline{R}$ contains a path of order $p\geq 2$; and
\item[(4)] $m\leq\min\{\nu(G),p+2|X|-5\}$,
\end{mathitem}
then $\overline{G}$ contains a $C_m$.
\end{lemma}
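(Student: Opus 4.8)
The plan is to run the proof of Lemma 7 almost verbatim, the only genuinely new feature being that $X=X_1\cup X_2$ need not be independent in $G$, so $\overline{G}[X]$ need not be complete. Write $A=X_1\setminus X_2$, $B=X_2\setminus X_1$ and $D=X_1\cap X_2$; hypothesis (1) gives $|A|=|B|\geq 2$ and $|A|+|D|=|B|+|D|=|X_1|=|X_2|\geq 3$, so that $|X|=|A|+|B|+|D|=|X_1|+|B|\geq 5$. Since $X_1$ and $X_2$ are independent in $G$, the subgraphs $\overline{G}[X_1]=\overline{G}[A\cup D]$ and $\overline{G}[X_2]=\overline{G}[B\cup D]$ are complete; consequently $\overline{G}[X]$ contains a Hamilton path whenever $D\neq\emptyset$, and in any case $\overline{G}[X_1]$ and $\overline{G}[X_2]$ are cliques of order $\geq 3$ covering $X$. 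These facts replace ``$\overline{G}[X]$ is complete'' from the proof of Lemma 7.

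As there, let $P$ be a longest path in $\overline{R}$ (so $\nu(P)\geq p$ by (3)), with end-vertices $s,t$, and let $U=V(R-P)$. By (2), each of $s,t$ is joined in $G$ to at most one vertex of $X_1$ and at most one vertex of $X_2$, hence is non-adjacent in $\overline{G}$ to at most two vertices of $X$. If $\nu(P)\geq m-1$, I take a subpath $P'\subseteq P$ of order $m-1$ with end-vertices $s',t'$; together $s'$ and $t'$ miss (in $\overline{G}$) at most four vertices of $X$, and since $|X|\geq 5$ some $x\in X$ is $\overline{G}$-adjacent to both, giving the $C_m$ $\ s'xt'P'$.

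So suppose $\nu(P)\leq m-2$ and put $q=m-\nu(P)\geq 2$; note $q\leq 2|X|-5$ by (4) together with $p\leq\nu(P)$. The aim is to find a path $Q$ of order $q$ in $\overline{G}[X\cup U]$ whose two end-vertices are $\overline{G}$-adjacent to $t$ and to $s$ respectively, so that $P\cup Q$ is the desired $C_m$. When $U\neq\emptyset$ one argues exactly as in Lemma 7: $s,t$ are $\overline{G}$-adjacent to every vertex of $U$, and $U\cup\{s,t\}$ lies in one component of $R$, hence is joined in $G$ to at most one vertex $y_1\in X_1$ and one vertex $y_2\in X_2$; thus $s$, $t$ and every vertex of $U$ are $\overline{G}$-adjacent to all of $X\setminus\{y_1,y_2\}$. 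One then builds $Q$ by the same alternating pattern as in Lemma 7 (vertices of $X\setminus\{y_1,y_2\}$ separated by vertices of $U$, splitting into the cases $q$ odd and $q$ even), now with the extra device that, because we have only $|X|-2$ rather than $|X|-1$ ``free'' vertices of $X$, the two vertices $y_1,y_2$ are reinserted as interior fillers, each placed between two vertices of the clique $X_i$ that contains it. When $U=\emptyset$ we have $\nu(P)=\nu(R)$ and $|X|=\nu(G)-\nu(P)\geq q$, and $Q$ must live entirely in $\overline{G}[X]$, with end-vertices avoiding the $\leq 2$ non-neighbours of $s$ and the $\leq 2$ of $t$: if $D\neq\emptyset$ this is extracted from the Hamilton path of $\overline{G}[X]$, truncated to order $q$ with the freedom to choose its ends in $A$ and in $B$; if $D=\emptyset$ one routes inside a single $X_i$, using $|A|=|B|\geq 3$ and (4) to keep $q$ within reach.

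I expect the $U\neq\emptyset$ step, and the $D=\emptyset$ instances of the $U=\emptyset$ step, to be the main obstacle. In Lemma 7 the filler vertices could be drawn freely from $X\cup U$ because $\overline{G}[X]$ was complete, whereas here every filler taken from $X$ must respect the decomposition $X=X_1\cup X_2$ into two cliques of $\overline{G}$, and one must verify that after discarding $y_1,y_2$ there remain enough vertices, distributed suitably between $X_1$ and $X_2$, to complete the alternating construction — in particular that whichever of $y_1,y_2$ lies in $A$ (resp.\ $B$, resp.\ $D$) can still be flanked by two vertices of the clique containing it. The hypotheses $|X_1|=|X_2|$, $|X_1\setminus X_2|=|X_2\setminus X_1|\geq 2$ and the sharpened inequality $m\leq p+2|X|-5$ are exactly what is needed to make this bookkeeping go through, and tracking the membership of $y_1,y_2$ among $A,B,D$ alongside the parity of $q$ and the size of $U$ should dispose of all remaining cases.
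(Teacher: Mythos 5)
Your plan transplants the proof of Lemma 7, but the one place where Lemma 8 genuinely differs is exactly the place you leave open, and in at least one configuration your fallback provably fails. The paper's proof does not try to find a long path inside $\overline{G}[X]$ at all; its key device is the \emph{adjustable segment}: a short stretch of the cycle of the form $x_1x'_1x''_1\,u\,x''_2x'_2x_2$ (or $x_1x'_1\,x\,x'_2x_2$ when $X_1\cap X_2\neq\emptyset$) in which the two cliques $\overline{G}[X_1]$ and $\overline{G}[X_2]$ are bridged by a \emph{center} --- either a vertex $x\in X_1\cap X_2$ or a vertex $u\in V(R)$ deliberately kept off the path $P$. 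To guarantee such a center exists when $X_1\cap X_2=\emptyset$, the paper takes $P$ to be a longest \emph{non-Hamilton} path of $\overline{R}$, so that $V(R)\setminus V(P)\neq\emptyset$. All remaining vertices of $X$ are then inserted one per edge of the cycle having both ends in $R$, and a maximality-plus-exchange argument (first for cycles satisfying the paper's conditions (a)(b)(c), then for those satisfying (d)(e)(f)) yields the counts $\nu(C)\geq\nu(P)+|X|+(|X|-5)\geq p+2|X|-5\geq m$ and $\nu(C)\geq\nu(R)+|X|\geq\nu(G)\geq m$.

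Your proposal has no analogue of this bridge. Concretely, in your case $U=\emptyset$, $D=X_1\cap X_2=\emptyset$: if $G$ contains every edge between $X_1\setminus X_2$ and $X_2\setminus X_1$, then $\overline{G}[X]$ is two disjoint cliques and every path in it has order at most $|X_1|=|X|/2$; yet $q=m-\nu(P)$ can be as large as $|X|$ (take $m=\nu(G)$ and $P$ a Hamilton path of $\overline{R}$, which condition (4) permits since $2|X|-5\geq|X|$). So ``routing inside a single $X_i$'' cannot produce the required $Q$, and the lemma in that situation is only rescued by sacrificing one vertex of $R$ from $P$ to serve as the bridge --- the idea your sketch is missing. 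The same issue infects the $U\neq\emptyset$ case whenever $|U|$ is too small to separate all the $X$-fillers: each passage between $X_1$-territory and $X_2$-territory costs a vertex of $U\cup D$, and your sketch neither bounds the number of passages needed nor verifies the supply; it explicitly defers this bookkeeping (``should dispose of all remaining cases''), so even where the approach could be made to work it is not yet a proof. A small additional slip: the endpoints $s,t$ of a longest path of $\overline{R}$ are adjacent \emph{in $G$} (not in $\overline{G}$) to every vertex of $U$ --- that is what places $U\cup\{s,t\}$ in one component of $R$ --- though the consequence you draw from it is the right one.
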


\begin{proof}
We first define an \emph{adjustable segment} of a cycle $C$. If
$X_1\cap X_2=\emptyset$, then letting $x_1,x'_1,x''_1\in X_1$,
$x_2,x'_2,x''_2\in X_2$ and $u\in V(R)$, we call a subpath $A$ an
adjustable segment of $C$ with the center $u$ if one of the
following is true:
\begin{mathitem}
\item[(1)] $A=x_1x'_1ux'_2x_2$ with $x''_1,x''_2\notin V(C)$;
\item[(2)] $A=x_1x'_1x''_1ux'_2x_2$ with $x''_2\notin V(C)$;
\item[(3)] $A=x_1x'_1ux''_2x'_2x_2$ with $x''_1\notin V(C)$; or
\item[(4)] $A=x_1x'_1x''_1ux''_2x'_2x_2$.
\end{mathitem}
If $X_1\cap X_2\neq\emptyset$, then letting $x_1,x'_1\in
X_1\backslash X_2$, $x_2,x'_2\in X_2\backslash X_1$ and $x\in
X_1\cap X_2$, we call a subpath $A$ an adjustable segment of $C$
with the center $x$ if one of the following is true:
\begin{mathitem}
\item[(1)] $A=x_1xx_2$ with $x'_1,x'_2\notin V(C)$;
\item[(2)] $A=x_1x'_1xx_2$ with $x'_2\notin V(C)$;
\item[(3)] $A=x_1xx'_2x_2$ with $x'_1\notin V(C)$; or
\item[(4)] $A=x_1x'_1xx'_2x_2$.
\end{mathitem}

If $X_1\cap X_2\neq\emptyset$, then let $P$ be a path in
$\overline{R}$ with the largest order; if $X_1\cap X_2=\emptyset$,
then let $P$ be a non-Hamilton path in $\overline{R}$ with the
largest order.

If $\nu(P)\geq m-5$, then let $P'$ be a subpath of $P$ of order
$m-5$ and $s,t$ be the two end-vertices of $P'$. If $X_1\cap
X_2\neq\emptyset$, then let $x$ be a vertex in $X_1\cap X_2$, $x_1$
a vertex in $X_1\backslash X_2$ nonadjacent to $s$, $x'_1$ a vertex
in $X_1\backslash X_2\backslash\{x_1\}$, $x_2$ a vertex in
$X_2\backslash X_1$ nonadjacent to $t$ and $x'_2$ a vertex in
$X_2\backslash X_1\backslash\{x_2\}$. Then $C=sx_1x'_1xx'_2x_2tP'$
is a $C_m$ in $\overline{G}$. If $X_1\cap X_2=\emptyset$, then let
$u$ be a vertex in $V(R-P')$, $x_1$ a vertex in $X_1$ nonadjacent to
$s$, $x'_1$ a vertex in $X_1\backslash\{x_1\}$ nonadjacent to $u$,
$x_2$ a vertex in $X_2$ nonadjacent to $t$ and $x'_2$ a vertex in
$X_2\backslash\{x_2\}$ nonadjacent to $u$. Then
$C=sx_1x'_1ux'_2x_2tP'$ is a $C_m$ in $\overline{G}$.

Now we assume that $\nu(P)\leq m-6$. By a similar argument in the
analysis above, we can get a cycle $C$ in $\overline{G}$ of order at
least $\nu(P)+5$ such that
\begin{mathitem}
\item[(a)] $C$ contains $P$ as a subpath;
\item[(b)] $C$ contains an adjustable segment $A$ (with end-vertices $x_1,x_2$);
\item[(c)] every edge of $C$ has a vertex in $R$, unless it is an edge in
$A$.
\end{mathitem}

Now we choose a cycle $C$ in $\overline{G}$ satisfying (a)(b)(c)
with order as large as possible but at most $m$. If $\nu(C)=m$, then
we are done. So we assume that $\nu(C)\leq m-1$. We claim that
$V(R)\subset V(C)$. Assume the contrary. Let $v$ be a vertex in
$U=V(R)\backslash V(C)$.

If ($X_1\cap X_2=\emptyset$ and) $A=x_1x'_1ux'_2x_2$ with $x''_1\in
X_1\backslash V(C)$, $x''_2\in X_2\backslash V(C)$, then
$C'=C-x_1x'_1\cup x_1x''_1x'_1$ is a required cycle with order
$\nu(C)+1$, a contradiction. Using the same analysis, we can
conclude that $A=x_1x'_1x''_1ux''_2x'_2x_2$ (if $X_1\cap
X_2=\emptyset$) or $A=x_1x'_1xx'_2x_2$ (if $X_1\cap
X_2\neq\emptyset$).

If $X_1\cap X_2\neq\emptyset$, then $P$ is a longest path of
$\overline{R}$; if $X_1\cap X_2=\emptyset$, then noting that $u,v\in
V(R-P)$, $P$ is a longest path of $\overline{R}$ as well. Thus
$\nu(P)\geq p$ and $U\cup\{s,t\}$ is contained in a component of
$R$. If there is a vertex $y$ in $X$ that is joined to
$U\cup\{s,t\}$, then we use $y$ instead of the vertex $x'_1$, $x'_2$
or $x$ in $C$, for the case $y\in X_1\backslash X_2$, $y\in
X_2\backslash X_1$, or $y\in X_1\cap X_2$, respectively. Thus we
assume that every vertex in $X\backslash\{x'_1,x'_2,x\}$ is not
joined to $U\cup\{s,t\}$.

If every vertex in $X$ is in $V(C)$, then noting that there are at
most 5 vertices in $X$ each of which has a successor on $C$ such
that it is not in $R-P$, we have
$$\nu(C)\geq\nu(P)+|X|+(|X|-5)\geq p+2|X|-5\geq m,$$
a contradiction. So we assume that there is a vertex $x'$ in $X$
which is not in $C$. Let $v'$ be the predecessor of $x_1$ in $C$.
Clearly $v'\in U\cup\{s,t\}$. Then
$C'=v'x'vx_1x''_1\overrightarrow{C}[x''_1,v']$ (if $X_1\cap
X_2=\emptyset$) or $C'=v'x'vx_1x\overrightarrow{C}[x,v']$ (if
$X_1\cap X_2\neq\emptyset$) is a required cycle of order $\nu(C)+1$,
a contradiction. Thus as we claimed, every vertex in $R$ is in $C$.
This implies that $C$ is a cycle in $\overline{G}$ satisfying
\begin{mathitem}
\item[(d)] there is an edge $x_1x'_1\in E(C)$ such that $x_1,x'_1\in X_1$;
\item[(e)] there is an edge $x_2x'_2\in E(C)$ such that $x_2,x'_2\in X_2$;
\item[(f)] $V(R)\subset V(C)$.
\end{mathitem}

Now we choose a cycle $C$ in $\overline{G}$ satisfying (d)(e)(f)
with order as large as possible but at most $m$. If $\nu(C)=m$, then
we are done. So we assume that $\nu(C)\leq m-1$. If every vertex in
$X$ is in $C$, then
$$\nu(C)=\nu(R)+|X|\geq m,$$
a contradiction. So we assume that there is a vertex $x'$ in $X$
which is not in $C$. If $x'\in X_1$, then $C'=C-x_1x'_1\cup
x_1x'x'_1$ is a required cycle of order $\nu(C)+1$; if $x'\in X_2$,
then $C'=C-x_2x'_2\cup x_2x'x'_2$ is a required cycle of order
$\nu(C)+1$, a contradiction.

Thus the lemma holds.
\end{proof}

The proof of the next lemma is similar as the proof of Lemma 8, but
more involved.

\begin{lemma}
Let $G$ be a graph, $R$ be an induced subgraph of $G$, $X_1,X_2$ two
independent sets of $G-R$ (possibly joint), $X=X_1\cup X_2$. If
\begin{mathitem}
\item[(1)] $|X_1|=|X_2|\geq 3$, $|X_1\backslash
X_2|=|X_2\backslash X_1|\geq 2$;
\item[(2)] every component of $R$ has order at least 2;
\item[(3)] every component of $R$ is joined to at most one vertex in $X_i$, $i=1,2$;
\item[(4)] for any component $H$ of $R$, there are at least $q$ vertices
in $G-R$ each of which is either in $X$ or not joined to $H$;
\item[(5)] $\overline{R}$ contains a path of order $p\geq 2$; and
\item[(6)] $m\leq\min\{\lceil3\nu(R)/2\rceil+4,\nu(R)+q-1,p+2q-5\}$,
\end{mathitem}
then $\overline{G}$ contains a $C_m$.
\end{lemma}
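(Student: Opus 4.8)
The plan is to imitate the proof of Lemma 8, with the quantity $q$ of hypothesis (4) playing the role that $|X|$ plays there, and with hypothesis (2) used to control how tightly the vertices of $R$ can be packed onto a cycle of $\overline G$. Since $R$ is induced, two vertices of $R$ are adjacent in $\overline G$ exactly when they are non-adjacent in $R$, so $\overline R$ is the subgraph of $\overline G$ induced on $V(R)$. By (3) at most one vertex of each $X_i$ is joined to a component $H$ of $R$, so together with (4) the vertices of $X$ avoiding $H$, plus the non-neighbours of $H$ in $G-R$, form a pool of at least $q$ \emph{safe} vertices for $H$, any of which may sit on a cycle of $\overline G$ next to vertices of $H$. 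Every cycle we build will use only vertices of $R\cup X$ together with such safe vertices, so the uncontrolled remainder of $G-R$ never interferes. I would first repeat verbatim the two definitions of \emph{adjustable segment} (for $X_1\cap X_2=\emptyset$ and for $X_1\cap X_2\ne\emptyset$) from Lemma 8, and take $P$ to be a longest path of $\overline R$ when $X_1\cap X_2\ne\emptyset$ and a longest non-Hamilton path of $\overline R$ when $X_1\cap X_2=\emptyset$.

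The three terms in the minimum of (6) should correspond to three regimes of the construction. The term $\lceil3\nu(R)/2\rceil+4$ governs a ``stretch'' construction: wrap a short adjustable-segment pattern (the $+4$) around a core obtained by running along the vertices of $R$, broken into consecutive $\overline R$-adjacent pairs with one safe vertex inserted between successive pairs; since each safe vertex then pays for two vertices of $R$, the attainable length is at most about $\tfrac32\nu(R)$. The term $\nu(R)+q-1$ governs a ``fill'' regime, where essentially all of $R$ is used and the at most $q$ safe vertices per touched component limit the remaining room. The term $p+2q-5$ governs the ``short $P$'' regime, where $P$ is completed to a cycle by an alternating region $x_1u_1x_2u_2\cdots$ with the $x_i$ safe and the $u_i$ in $R-P$, gaining about $2q$ exactly as in Lemma 8. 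I would then split on whether $\nu(P)$ is large or small. When $\nu(P)$ is large, one builds $C_m$ outright by the stretch or short-$P$ construction. When $\nu(P)$ is small, one first builds a cycle $C$ of $\overline G$ with $\nu(C)\ge\nu(P)+5$ that contains $P$ as a subpath, contains an adjustable segment $A$ with end-vertices $x_1,x_2$, and has every edge meeting $R$ unless it lies in $A$ — the analogue of properties (a)(b)(c) of Lemma 8.

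Then, exactly as in Lemma 8, choose such a $C$ of maximum order subject to $\nu(C)\le m$; if $\nu(C)=m$ we are done, so suppose $\nu(C)\le m-1$. Next show $V(R)\subseteq V(C)$: if some $v\in V(R)\setminus V(C)$, then enlarging $C$ via the adjustable segment and $v$ forces $A$ to be of the ``full'' type and $P$ to be a longest path of $\overline R$, and then either $v$ and an off-cycle safe vertex can be spliced into $C$ (contradicting maximality) or $\nu(C)\ge p+2q-5\ge m$ (contradicting $\nu(C)\le m-1$). With $V(R)\subseteq V(C)$, pass to cycles of $\overline G$ satisfying the analogues of properties (d)(e)(f) of Lemma 8; a maximal such cycle with $\nu(C)\le m-1$ either omits a safe vertex, hence can be enlarged, or contains all of $R$ together with every safe vertex it can reach, which by (6) forces $\nu(C)\ge m$, a contradiction.

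The main obstacle is this final counting. In Lemma 8 a non-extendable maximal cycle failed in only two ways, giving the bounds $p+2|X|-5$ and $\nu(R)+|X|=\nu(G)$; here there are three, because $R$ is a proper induced subgraph (so $G-R$ carries extra vertices that must be excluded from every construction) and because (4) is a \emph{per-component} guarantee rather than a single global set $X$. Keeping track of which safe vertices are spent on which component of $R$, while respecting that consecutive vertices of one component of $R$ on the cycle must be $\overline R$-adjacent — which is where (2) enters, and where the interleaving ratio producing $\lceil3\nu(R)/2\rceil+4$ is forced — is the bookkeeping that makes this lemma more involved than Lemma 8. I expect no genuinely new idea beyond Lemma 8's machinery, only a more delicate case analysis.
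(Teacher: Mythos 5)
Your overall plan coincides with the paper's: reuse the adjustable-segment machinery of Lemma 8, build a cycle $C$ of maximum order at most $m$ satisfying the analogues of (a)(b)(c), show $V(R)\subseteq V(C)$ essentially as you describe, and then derive a contradiction from maximality. The gap is in the final step, which you dispose of with ``either omits a safe vertex, hence can be enlarged, or \dots forces $\nu(C)\ge m$.'' That enlargement does not follow. A vertex $w$ that is safe for a component $H$ (not joined to $H$, or in $X$) can only be spliced into $C$ between two consecutive cycle vertices that both lie in $H$; but once $V(R)\subseteq V(C)$, a vertex $u\in V(H)$ typically has its successor $u^{+}$ in a \emph{different} component of $R$, for which $w$ need not be safe, and the members of $X$ counted in the pool of hypothesis (4) may in fact be joined to $H$. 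So the existence of an off-cycle safe vertex gives you, by itself, nowhere to put it.

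The paper's resolution of this is the genuinely new content of the lemma and is absent from your sketch. One first proves, using hypothesis (2), that $C$ carries two vertices $u_1,u_2$ of a \emph{common} component of $R$ with $u_1^{+},u_2^{+}\in V(R)$: otherwise each component contributes at most one vertex whose successor stays in $R$, so at least $\lceil\nu(R)/2\rceil$ vertices of $R$ are followed by vertices of $G-R$, which together with the four non-successor vertices of the adjustable segment gives $\nu(C)\ge\lceil 3\nu(R)/2\rceil+4\ge m$ --- this existence claim, rather than a ``stretch construction,'' is what the first term of (6) is for. Then $w$ is inserted as $u_1wu_2$ while the arc $\overleftarrow{C}[u_2,u_1^{+}]$ is reversed and closed up by the edge $u_1^{+}u_2^{+}$ when $u_1^{+},u_2^{+}$ lie in distinct components; when they lie in a common component one needs a \emph{second} safe vertex $w'\neq w$ for that component (this is the reason for the $-1$ in $\nu(R)+q-1$), inserted as $u_1^{+}w'u_2^{+}$, and must then delete one vertex from the adjustable segment so that the net gain is exactly $1$. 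Your closing paragraph correctly locates where (2) and the per-component nature of (4) must enter, but without this double-splice-and-reversal mechanism the case analysis you defer cannot be completed, so the proposal as it stands is a plan whose critical step fails in its naive form.
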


\begin{proof}
We use the concept of an adjustable segment defined in Lemma 8. If
$X_1\cap X_2\neq\emptyset$, then let $P$ be a path in $\overline{R}$
with the largest order; if $X_1\cap X_2=\emptyset$, then let $P$ be
a non-Hamilton path in $\overline{R}$ with the largest order.

If $\nu(P)\geq m-5$, then similar as in Lemma 8, we can find a $C_m$
in $\overline{G}$. Thus we assume that $\nu(P)\leq m-6$. By a
similar argument as in Lemma 8, we can get a cycle $C$ in
$\overline{G}$ of order at least $\nu(P)+5$ such that
\begin{mathitem}
\item[(a)] $C$ contains $P$ as a subpath;
\item[(b)] $C$ contains an adjustable segment $A$ (with end-vertices
$x_1,x_2$);
\item[(c)] every edge of $C$ has a vertex in $R$, unless it is an edge in
$A$.
\end{mathitem}

Now we choose a cycle $C$ in $\overline{G}$ satisfying (a)(b)(c)
with order as large as possible but at most $m$. If $\nu(C)=m$, then
we are done. So we assume that $\nu(C)\leq m-1$. We claim that
$V(R)\subset V(C)$. Assume the contrary. Let $v$ be a vertex in
$U=V(R-C)$.

Using the same analysis in Lemma 8, we can conclude that
$A=x_1x'_1x''_1ux''_2x'_2x_2$ (if $X_1\cap X_2=\emptyset$) or
$A=x_1x'_1xx'_2x_2$ (if $X_1\cap X_2\neq\emptyset$) and $P$ is a
longest path of $\overline{R}$. Thus $\nu(P)\geq p$ and
$U\cup\{s,t\}$ is contained in a common component of $R$.
Furthermore, we can assume that every vertex in
$X\backslash\{x'_1,x'_2,x\}$ is not joined to $U\cup\{s,t\}$.

Let $W$ be the union of $X$ and the set of vertices in $G-R$ that
are not joined to $U\cup\{s,t\}$. Then $|W|\geq q$. If every vertex
in $W$ is in $V(C)$, then noting that there are at most 5 vertices
in $W$ each of which has a successor on $C$ such that it is not in
$R-P$, we have
$$\nu(C)\geq\nu(P)+|W|+(|W|-5)\geq p+2q-5\geq m,$$
a contradiction. So we assume that there is a vertex $w$ in $W$ that
is not in $V(C)$. Let $v'$ be the predecessor of $x_1$ in $C$.
Clearly $v'\in U\cup\{s,t\}$. Then
$C'=v'wvx_1x''_1\overrightarrow{C}[x''_1,v']$ (if $X_1\cap
X_2=\emptyset$) or $C'=v'wvx_1x\overrightarrow{C}[x,v']$ (if
$X_1\cap X_2\neq\emptyset$) is a required cycle of order $\nu(C)+1$,
a contradiction. Thus as we claimed, every vertex in $R$ is in $C$.
This implies $C$ satisfies (b)(c) and
\begin{mathitem}
\item[(d)] $V(R)\subset V(C)$.
\end{mathitem}
Now we choose a cycle $C$ in $\overline{G}$ satisfying (b)(c)(d)
with order as large as possible but at most $m$. If $\nu(C)=m$, then
we are done. So we assume that $\nu(C)\leq m-1$. By a similar
argument as above, we can conclude that
$A=x_1x'_1x''_1ux''_2x'_2x_2$ (if $X_1\cap X_2=\emptyset$) or
$A=x_1x'_1xx'_2x_2$ (if $X_1\cap X_2\neq\emptyset$).

We claim that there are two vertices $u_1,u_2$ in $C$ such that
$u_1,u_2$ are in a common component of $R$ and $u_1^+,u_2^+\in
V(R)$. Assume the contrary. Note that every component of $R$ has at
least 2 vertices, there is at most one vertex in a component, such
that it has a successor on $C$ in $R$, and there are 4 vertices of
$C$ (in the adjusted segment) each of which is not a successor of
some vertex in $R$. Thus
$$\nu(C)\geq\nu(R)+\left\lceil\frac{\nu(R)}{2}\right\rceil+4=
    \left\lceil\frac{3\nu(R)}{2}\right\rceil+4\geq m,$$
a contradiction. Thus as we claimed, there are two edges
$u_1u_1^+,u_2u_2^+$ such that $u_1,u_2$ are in a common component of
$R$ and $u_1^+,u_2^+\in V(R)$.

If there is a vertex $y$ in $X\backslash V(C)$ that is joined to
$\{u_1,u_2\}$, then we use $y$ instead of the vertex $x'_1$, $x'_2$
or $x$ in $C$. Thus we assume that every vertex in $X\backslash
V(C)$ is not joined to $\{u_1,u_2\}$. Let $W$ be the union of $X$
and the set of vertices in $G-R$ that are not joined to
$\{u_1,u_2\}$. Then $|W|\geq q$. If every vertex in $W$ is in $C$,
then
$$\nu(C)\geq\nu(R)+|W|\geq \nu(R)+q\geq m,$$
a contradiction. Thus we assume that there is a vertex $w$ in $W$
that is not in $C$.

If $u_1^+,u_2^+$ are in distinct components of $R$, then
$C'=u_1wu_2\overleftarrow{C}[u_2,u_1^+]u_1^+u_2^+\overrightarrow{C}[u_2^+,u_1]$
is a required cycle with order $\nu(C)+1$. Now we assume that
$u_1^+,u_2^+$ are in a common component of $R$.

If there is a vertex $y'$ in $X\backslash\{w\}$ that is joined to
$\{u_1^+,u_2^+\}$, then we use $y'$ instead of the vertex $x'_1$,
$x'_2$ or $x$ in $C$. Thus we assume that every vertex in
$X\backslash V(C)\backslash\{w\}$ is not joined to
$\{u_1^+,u_2^+\}$.

Let $W'$ be the union of $X$ and the set of vertices in $G-R$ that
are not joined to $\{u_1^+,u_2^+\}$. Then $|W'|\geq q$. If every
vertex in $W'\backslash\{w\}$ is in $C$, then
$$\nu(C)\geq\nu(R)+|W'|-1\geq\nu(R)+q-1\geq m,$$
a contradiction. Thus we assume that there is a vertex $w'$ in
$W\backslash\{w\}$ that is not in $C$. Let
$C'=u_1wu_2\overleftarrow{C}[u_2,u_1^+]u_1^+w'u_2^+\overrightarrow{C}[u_2^+,u_1]$.
Then $C''=C'-x_1x'_1x''_1\cup x_1x''_1$ (if $X_1\cap X_2=\emptyset$)
or $C''=C'-x_1x'_1x\cup x_1x$ (if $X_1\cap X_2\neq\emptyset$) is a
required cycle of order $\nu(C)+1$, a contradiction.
\end{proof}

\section{Proof of Theorem 5}

The case of $n=2$ is trivial. For the case of $n=3$ or $n=4$, we are
done by Theorem 6. Thus in the following we will assume that $n\geq
5$.

Let $t=t(n,m)$. By Lemma 1, $t(n,m)=\min\{t: t\notin \mathcal
{L}[t-m+1,n-1]\}$. Thus $t-1\in \mathcal {L}[t-m,n-1]$. Let
$t-1=\sum_{i=1}^k t_i$, where $t_i\in[t-m,n-1]$, $1\leq i\leq k$.
Let $G$ be a graph with $k$ components $H_1,\ldots,H_k$ such that
$H_i$ is a clique on $t_i$ vertices. Note that $G$ contains no $P_n$
since every component of $G$ has less than $n$ vertices; and
$\overline{G}$ contains no $W_m$ since every vertex of $G$ has less
than $m$ nonadjacent vertices. Thus $G$ is a graph on $t-1$ vertices
such that $G$ contains no $P_n$ and $\overline{G}$ contains no
$W_m$. This implies that $R(P_n,W_m)\geq t$.

Now we will prove that $R(P_n,W_m)\leq t$. Assume not. Let $G$ be a
graph on $t$ vertices such that $G$ contains no $P_n$ and
$\overline{G}$ contains no $W_m$.

Let $s=m+n-t$ (i.e., $\nu(G)=m+n-s$).

\begin{claim}
  $1\leq s\leq \lfloor(n+5)/4\rfloor$.
\end{claim}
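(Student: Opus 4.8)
The plan is to argue directly from the explicit two‑branch formula for $t=t(n,m)$ in Theorem 5, using the standing hypotheses $n\geq 5$ and $m\geq 2n+1$. Two elementary facts are used throughout. First, $m\geq 2n+1$ gives $\alpha=\frac{m-1}{n-1}\geq\frac{2n}{n-1}>2$, hence $\beta=\lceil\alpha\rceil\geq 3$. Second, writing $f=\lfloor(n+5)/4\rfloor$, the fractional part of $(n+5)/4$ is at most $3/4$, so $4f\geq n+2$, which rearranges to $n-f-1\leq 3(f-1)$ (and note $f-1\geq 1$ since $n\geq 5$). For the lower bound $s\geq 1$ there is little to do. If $\alpha\leq\gamma$ then $t=(n-1)\beta+1$, and expanding $s=m+n-t$ and using $n-(n-1)\beta=1-(n-1)(\beta-1)$ gives $s=(m-1)-(n-1)(\beta-1)+1$, which is at least $2$ because $\beta=\lceil\alpha\rceil$ forces $\alpha>\beta-1$, i.e. $m-1>(n-1)(\beta-1)$. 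If $\alpha>\gamma$ then $t=\lfloor(m-1)/\beta\rfloor+m$, so $s=n-\lfloor(m-1)/\beta\rfloor$, and $\alpha\leq\beta$ gives $(m-1)/\beta\leq n-1$, whence $s\geq 1$.

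For the upper bound $s\leq f$, consider first the branch $\alpha\leq\gamma$. Clearing denominators in $\alpha\leq\gamma$ gives $m-1\leq\frac{\beta^2(n-1)}{\beta+1}$; substituting this into $s=(m-1)-(n-1)(\beta-1)+1$ and using the identity $\beta^2-(\beta-1)(\beta+1)=1$ yields $s\leq\frac{n-1}{\beta+1}+1\leq\frac{n-1}{4}+1=\frac{n+3}{4}$, and since $s$ is an integer, $s\leq\lfloor(n+3)/4\rfloor\leq f$. Now consider the branch $\alpha>\gamma$, where it suffices to prove $\lfloor(m-1)/\beta\rfloor\geq n-f$, equivalently $m-1\geq\beta(n-f)$. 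Clearing denominators in $\alpha>\gamma$ gives $(\beta+1)(m-1)>\beta^2(n-1)$, and since both sides are integers, $(\beta+1)(m-1)\geq\beta^2(n-1)+1$, so $m-1\geq\big\lceil\frac{\beta^2(n-1)+1}{\beta+1}\big\rceil$. It therefore suffices to check $\big\lceil\frac{\beta^2(n-1)+1}{\beta+1}\big\rceil\geq\beta(n-f)$, i.e. (as $\beta(n-f)\in\mathbb{Z}$) that $\frac{\beta^2(n-1)+1}{\beta+1}>\beta(n-f)-1$. Clearing denominators and simplifying reduces this to $\beta^2(f-1)+2>\beta(n-f-1)$, which follows from $n-f-1\leq 3(f-1)\leq\beta(f-1)$ (using $\beta\geq 3$), since then $\beta(n-f-1)\leq\beta^2(f-1)<\beta^2(f-1)+2$.

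The step I expect to be the real obstacle is this last one, in the branch $\alpha>\gamma$. The tempting shortcut — bounding $\lfloor(m-1)/\beta\rfloor$ by flooring the real lower bound $\frac{\beta^2(n-1)+1}{\beta(\beta+1)}$ for $(m-1)/\beta$ — loses a unit and gives only $s\leq f+1$; it genuinely fails when $n\equiv 2\pmod 4$, where $4f=n+2$ and there is no slack to spare. One must instead first round $m-1$ up to an integer and only afterwards divide by $\beta$, so that the integrality of $m-1$ is exploited before the second floor is applied. Arranging the two roundings in the correct order, and verifying that the resulting inequality $\beta^2(f-1)+2>\beta(n-f-1)$ remains valid for every residue of $n$ modulo $4$ with $\beta\geq 3$, is the delicate point; everything else is routine manipulation of the two‑branch formula.
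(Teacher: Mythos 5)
Your proof is correct. I checked the delicate step: in the branch $\alpha>\gamma$ the reduction of $\frac{\beta^2(n-1)+1}{\beta+1}>\beta(n-f)-1$ to $\beta^2(f-1)+2>\beta(n-f-1)$ is right, and the chain $\beta(n-f-1)\leq 3\beta(f-1)\leq\beta^2(f-1)$ (using $4f\geq n+2$ and $\beta\geq 3$) closes it; the first branch and both lower bounds are routine as you say. Your route is genuinely different in execution from the paper's, though cognate in spirit. The paper never touches the closed-form formula: it works from the characterization $t\notin\mathcal{L}[t-m+1,n-1]$ of Lemma 1, places $t$ in an interval $[k(n-1)+1,(k+1)(t-m+1)-1]$ for some integer $k$, rules out $k\leq 2$, and then splits on whether $m\leq(k^2n-k^2+2k)/(k+1)$ to get the real-number bound $s\leq(n+2k-1)/(k+1)\leq(n+5)/4$ in both subcases, with integrality of $s$ invoked only at the very end. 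That ordering is what lets the paper avoid the double-rounding problem you correctly identify as the obstacle: by lower-bounding $t$ itself by $\frac{k+1}{k}m-1$ rather than starting from the already-floored expression $\lfloor(m-1)/\beta\rfloor+m$, the paper never has to recover a lost unit. Your version pays for starting from the explicit formula by needing the two-stage integrality argument (round $m-1$ up first, divide by $\beta$ second), but in exchange it is self-contained given only the statement of Theorem 5 and even yields the slightly sharper bound $s\leq\lfloor(n+3)/4\rfloor$ in the branch $\alpha\leq\gamma$. Both arguments ultimately rest on the same fact, $\beta\geq 3$ (the paper's $k\geq 3$).
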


\begin{proof}
Let $t'=m+n-1$. Since $t'-m+1=n$, $[t'-m+1,n-1]=\emptyset$, and
$t'\notin \mathcal {L}(\emptyset)=\{0\}$, we have $t\leq t'=m+n-1$,
and this implies that $s\geq 1$.

Now we prove that $s\leq (n+5)/4$. By Lemma 1, $t\notin \mathcal
{L}[t-m+1,n-1]$. Thus $t\notin[k(t-m+1),k(n-1)]$, for every $k\geq
1$. That is, $t\in[k(n-1)+1,(k+1)(t-m+1)-1]$, for some $k$.

If $k\leq 2$, then by $t\leq (k+1)(t-m+1)-1$, we get that
$$t\geq\frac{k+1}{k}m-1\geq\frac{3}{2}m-1>3n-1\geq 3(t-m+1)-1,$$
a contradiction. Thus we assume that $k\geq 3$.

If $m\leq(k^2n-k^2+2k)/(k+1)$, then
\begin{align*}
  s &=m+n-t\leq \frac{k^2n-k^2+2k}{k+1}+n-(k(n-1)+1)\\
    &=\frac{n+2k-1}{k+1}\leq\frac{n+5}{4}.
\end{align*}

If $m>(k^2n-k^2+2k)/(k+1)$, then
\begin{align*}
  s &=m+n-t\leq m+n-(\frac{k+1}{k}m-1)\\
    &=n-\frac{m}{k}+1<n-\frac{k^2n-k^2+2k}{k(k+1)}+1\\
    &=\frac{n+2k-1}{k+1}\leq\frac{n+5}{4}.
\end{align*}

Thus the claim holds.
\end{proof}

We list the possible values of $s$ for $n\leq 16$.

\begin{center}
\begin{tabular}{c|c|c|c|c|c|c|c|c|c|c|c|c}
  \hline \hline
  $n$ & 5 & 6 & 7 & 8 & 9 & 10 & 11 & 12 & 13 & 14 & 15 & 16\\
  \hline
  $s\leq$ & 2 & 2 & 3 & 3 & 3 & 3 & 4 & 4 & 4 & 4 & 5 & 5\\
  \hline \hline
\end{tabular}\\[3mm]
\footnotesize {Table 1: The possible values of $s$ for $n\leq 16$.}
\end{center}

\begin{claim}
Let $v$ be an arbitrary vertex of $G$ and $G'\subset G-v-N(v)$. Then
$\overline{G'}$ contains no $C_m$.
\end{claim}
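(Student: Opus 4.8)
The plan is to read this off directly from the definition of a wheel: $W_m$ is a $C_m$ together with one additional vertex, the hub, joined to every vertex of that $C_m$. So I would argue by contradiction, using $v$ as a candidate hub.

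Suppose $\overline{G'}$ contains a copy of $C_m$, say on vertex set $S=V(C_m)\subseteq V(G')$. The first step is to locate $S$ relative to $v$. Since $G'\subseteq G-v-N(v)$, we have $S\subseteq V(G)\setminus(\{v\}\cup N(v))$; in particular $v\notin S$, and for each $u\in S$ the pair $uv$ is a non-edge of $G$ and hence an edge of $\overline{G}$. Thus in $\overline{G}$ the vertex $v$ is adjacent to every vertex of $S$.

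The second step is to transfer the $C_m$ itself from $\overline{G'}$ into $\overline{G}$. Here I would use that $G'$ is an induced subgraph of $G-v-N(v)$, so that $\overline{G'}=\overline{G}[V(G')]$ and every edge of the $C_m$ on $S$ is an edge of $\overline{G}$. Combining the two steps, the vertex set $S\cup\{v\}$ spans a wheel $W_m$ in $\overline{G}$ — the $C_m$ on $S$ as the rim, $v$ as the hub — contradicting the standing hypothesis that $\overline{G}$ contains no $W_m$. Hence $\overline{G'}$ contains no $C_m$.

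I do not expect any genuine obstacle here; the claim is essentially a tautology once the wheel is unpacked. The only points needing a moment's care are that $v\notin V(G')$ (immediate) and that passing from $\overline{G'}$ to $\overline{G}$ creates no spurious edges, which is why $G'$ must be understood as an induced subgraph of $G-v-N(v)$; the whole point of deleting $N(v)$ as well as $v$ is precisely to guarantee the hub edges $uv$, $u\in S$, in the complement.
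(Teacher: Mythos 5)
Your proof is correct and is exactly the paper's argument: since $G'\subseteq G-v-N(v)$, the vertex $v$ is nonadjacent in $G$ to every vertex of the $C_m$, so that cycle together with $v$ as hub yields a $W_m$ in $\overline{G}$, a contradiction. The paper states this in one line; your only addition is the (correct) observation that $G'$ must be read as an induced subgraph so the cycle's edges survive the passage from $\overline{G'}$ to $\overline{G}$.
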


\begin{proof}
  Otherwise, noting that $v$ is nonadjacent to every vertex in the
  $C_m$, there will be a $W_m$ in $\overline{G}$ (with the hub $v$).
\end{proof}

\begin{claim}
  $\delta(G)\geq \lceil n/2\rceil-s+1$.
\end{claim}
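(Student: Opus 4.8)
The plan is to argue by contradiction using Claim 2, which says that for any vertex $v$, no induced subgraph of $G-v-N(v)$ has complement containing a $C_m$. Suppose some vertex $v$ has $d(v)=d<\lceil n/2\rceil-s+1$, and set $G'=G-v-N(v)$, so that $\nu(G')=\nu(G)-d-1=m+n-s-d-1$. Since $d\le\lceil n/2\rceil-s$, we get $\nu(G')\ge m+n-s-(\lceil n/2\rceil-s)-1=m+n-\lceil n/2\rceil-1=m+\lfloor n/2\rfloor-1$. Because $G$ contains no $P_n$, neither does the subgraph $G'$. The idea is then to feed $G'$ into the path/cycle machinery of Lemmas 3--6: $G'$ is a $P_n$-free graph on at least $m+\lfloor n/2\rfloor-1\ge m$ vertices, which is exactly the threshold appearing in Lemma 4(2), so $\overline{G'}$ should contain a long path, and then Lemma 5 or Lemma 6 should upgrade this to a $C_m$ in $\overline{G'}$, contradicting Claim 2.

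The first step is to handle the case where $G'$ is connected. Since $G'$ is connected, $P_n$-free, and $\nu(G')\ge m+\lfloor n/2\rfloor-1\ge\lfloor 3n/2\rfloor-1$ (using $m\ge 2n+1$), Lemma 4(2) gives a path of order $\nu(G')+1-\lfloor n/2\rfloor\ge m$ in $\overline{G'}$; taking a subpath of order $m$ and closing it up — one needs a vertex outside to act as the last vertex of a cycle, or more carefully one applies Lemma 5 to a suitable disconnected auxiliary graph — yields a $C_m$ in $\overline{G'}$. Actually the cleanest route: a path of order $m$ in $\overline{G'}$ together with the structure of $\overline{G'}$ being dense enough (complement of $P_n$-free, hence complement has large minimum degree on most vertices) lets Lemma 5 produce a cycle; alternatively Lemma 6 applies directly since it is stated for disconnected $P_n$-free graphs with a mild balance condition on component orders.

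The second and harder step is when $G'$ is disconnected. Here the tool is Lemma 6: we need $m\le\nu(G')$ (already established) and that the order-sum of every $\omega(G')-1$ components is at least $m+\lfloor n/2\rfloor-\nu(G')$. Since $\nu(G')\ge m+\lfloor n/2\rfloor-1$, the right-hand side is at most $1$, so the balance condition is automatic as long as $G'$ has at least two components each of order $\ge 1$ — which is what "disconnected" means. Hence Lemma 6 gives a $C_m$ in $\overline{G'}$, again contradicting Claim 2. So in both cases we reach a contradiction, and therefore $\delta(G)\ge\lceil n/2\rceil-s+1$.

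The main obstacle I anticipate is making the connected case airtight: Lemma 4 only produces a long \emph{path} in $\overline{G'}$, not a cycle, and converting a path of order $m$ into a $C_m$ requires a little care about where the "extra" vertex comes from (one may need $\nu(G')\ge m+1$, or to invoke Lemma 5 on $G'$ minus the path's interior, or to observe that $\overline{G'}$ on $m$ vertices has minimum degree $\ge\lceil m/2\rceil$ because $G'$ being $P_n$-free with $n\le m$ forces small components — this is precisely the content of Lemma 3). I would double-check the bound $\nu(G')\ge\lfloor 3n/2\rfloor-1$ needed to invoke Lemma 4(2) rather than 4(1): since $m\ge 2n+1$ we have $m+\lfloor n/2\rfloor-1\ge 2n+1+\lfloor n/2\rfloor-1=2n+\lfloor n/2\rfloor\ge\lfloor 3n/2\rfloor-1$, so this is fine. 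A secondary subtlety is whether $d$ could be so small that $\nu(G')$ is much larger than $m$, but that only makes the hypotheses of Lemmas 4--6 easier to satisfy, so it causes no trouble.
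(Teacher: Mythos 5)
Your overall strategy is the right one and matches the paper's: take a vertex $v$ with $d(v)\leq\lceil n/2\rceil-s$, pass to $G'=G-v-N(v)$, note $\nu(G')\geq m+\lfloor n/2\rfloor-1$, produce a $C_m$ in $\overline{G'}$, and contradict Claim 2. Your disconnected case is also fine: with $\nu(G')\geq m+\lfloor n/2\rfloor-1$ the balance condition of Lemma 6 reduces to asking that $\omega(G')-1$ components have total order at least $1$, which is automatic, so Lemma 6 applies directly.

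The genuine gap is the connected case, and it is exactly the obstacle you flag but do not resolve. Lemma 4(2) gives you a path of order at least $m$ in $\overline{G'}$, but none of the tools you cite can close it into a $C_m$: Lemma 3 and Lemma 6 both require $G'$ to be disconnected, and Lemma 5 requires a decomposition into two disjoint pieces $G_1\cup G_2$ with a path in $\overline{G_1}$, neither of which you have for a connected $P_n$-free $G'$ (such graphs exist on arbitrarily many vertices, e.g.\ large stars, so the case is not vacuous). Your fallback remark that ``$\overline{G'}$ has large minimum degree because $P_n$-freeness forces small components'' is precisely what fails when $G'$ is connected. The paper avoids this entirely by invoking Theorem 2 (the path--cycle Ramsey numbers of Faudree et al.): since $m\geq 2n+1$ one has $R(P_n,C_m)=m+\lfloor n/2\rfloor-1$ for both parities of $m$, so any $P_n$-free graph on at least $m+\lfloor n/2\rfloor-1$ vertices --- connected or not --- has a $C_m$ in its complement. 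Replacing your Lemma 3--6 machinery by this single application of Theorem 2 repairs the argument and is the paper's one-line proof.
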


\begin{proof}
Assume the contrary. Let $v$ be a vertex of $G$ with $d(v)\leq
\lceil n/2\rceil-s$. Then $G'=G-v-N(v)$ has at least $m+\lfloor n/2
\rfloor-1$ vertices. Since $G'$ contains no $P_n$, by Theorem 2,
$\overline{G'}$ contains a $C_m$ (note that $m\geq 2n+1$), a
contradiction to Claim 2.
\end{proof}

From Claims 1 and 3, one can see that $\delta(G)\geq 2$ (when $n\geq
5$).

\begin{case}
  $G$ is disconnected.
\end{case}

\begin{subcase}
  Every component of $G$ has order less than $n$.
\end{subcase}

Let $H_i$, $1\leq i\leq k$, be the components of $G$. Since $t\notin
\mathcal {L}[t-m+1,n-1]$, there is a component, say $H_1$, with
order at most $t-m$. Thus $\sum_{i=2}^k\nu(H_i)\geq m$. Since
$\nu(H_i)\leq n-1\leq\lfloor m/2\rfloor$. By Lemma 3,
$\overline{G-H_1}$ contains a $C_m$, a contradiction.

\begin{subcase}
  There is a component of $G$ with order at least $n$.
\end{subcase}

Let $H$ be a component of $G$ with the largest order. Note that
$\nu(H)\geq n$. If every vertex of $H$ has degree at least $\lfloor
n/2\rfloor$, then by Lemma 2, $H$ contains a $P_n$, a contradiction.
Thus there is a vertex $v$ in $H$ with $d(v)\leq\lfloor
n/2\rfloor-1$. Let $G'=G-v-N(v)$. Then
\begin{align*}
\nu(G') & =\nu(G)-1-d(v)\\
        & \geq m+n-s-1-\left\lfloor\frac{n}{2}\right\rfloor+1\\
        & =m+\left\lceil\frac{n}{2}\right\rceil-s\geq m.
\end{align*}

Since $\nu(H)\geq n>1+d(v)$, $G'$ is disconnected. Let $\mathcal{H}$
be the union of $\omega(G')-1$ components of $G'$. We will prove
that $\nu(\mathcal{H})\geq m+\lfloor n/2\rfloor-\nu(G')$.

Let $H'$ be a component of $G$ other than $H$. If
$H'\subset\mathcal{H}$, then $\nu(\mathcal{H})\geq\nu(H')\geq
1+\delta(G)$, and
\begin{align*}
        & \nu(\mathcal{H})+\nu(G')-m-\left\lfloor\frac{n}{2}\right\rfloor
            \geq 1+\delta(G)+\nu(G')-m-\left\lfloor\frac{n}{2}\right\rfloor\\
\geq    & 1+\left\lceil\frac{n}{2}\right\rceil-s+1+
            m+\left\lceil\frac{n}{2}\right\rceil
            -s-m-\left\lfloor\frac{n}{2}\right\rfloor\\
=       & \left\lceil\frac{n}{2}\right\rceil+\pa(n)+2-2s\geq 0.
\end{align*}
If $H'\not\subset\mathcal{H}$, then
$\nu(\mathcal{H})=\nu(G')-\nu(H')\geq\nu(G')-\lfloor\nu(G)/2\rfloor$,
and
\begin{align*}
        & \nu(\mathcal{H})+\nu(G')-m-\left\lfloor\frac{n}{2}\right\rfloor
            \geq \nu(G')-\left\lfloor\frac{\nu(G)}{2}\right\rfloor+\nu(G')
            -m-\left\lfloor\frac{n}{2}\right\rfloor\\
\geq    & 2\left(m+\left\lceil\frac{n}{2}\right\rceil-s\right)-
            \left\lfloor\frac{m+n-s}{2}\right\rfloor-m
            -\left\lfloor\frac{n}{2}\right\rfloor\\
=       & \left\lceil\frac{n}{2}\right\rceil+\pa(n)+
            \left\lceil\frac{m-n-3s}{2}\right\rceil\\
\geq    & \left\lceil\frac{m-3s}{2}\right\rceil
            \geq\left\lceil\frac{2n+1-3s}{2}\right\rceil\geq 0.
\end{align*}

Now by Lemma 6, $\overline{G'}$ contains a $C_m$, a contradiction.

\begin{case}
  $G$ has connectivity 1.
\end{case}

Note that $\delta(G)\geq\lceil n/2\rceil-s+1\geq 2$. Every end-block
of $G$ is 2-connected.

\begin{subcase}
  There is an end-block of $G$ with order at least $\lceil m/2\rceil+1$.
\end{subcase}

Let $B$ be an end-block of $G$ with the maximum order, and $x$ be
the cut-vertex of $G$ contained in $B$. Let $x'$ be a cut-vertex of
$G$ such that the longest path between $x$ and $x'$ is as long as
possible. Clearly $x'$ is contained in some end-blocks. Let $B'$ be
an end-block of $G$ containing $x'$ ($B\neq B'$). Let $v$ be a
vertex in $B-x$ such that $d_{B-x}(v)$ is as small as possible.

\begin{claim}
$$d_{B-x}(v)\leq\left\{\begin{array}{ll}
\lceil(n+2s-\pa(n))/4\rceil-2,      & \mbox{if } x=x';\\
\lfloor(n+2s-\pa(n))/4\rfloor-2,    & \mbox{if } xx' \mbox{ is a cut-edge of } G;\\
\lceil(n+2s-\pa(n))/4\rceil-3,      & \mbox{otherwise}.
\end{array}\right.$$
\end{claim}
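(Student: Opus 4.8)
The statement to prove is Claim~4, a degree bound on the vertex $v$ of minimum degree inside the end-block $B$ (with the cut-vertex $x$ removed). The natural strategy is the same one used already in Subcase~1.2: suppose $d_{B-x}(v)$ is too large, delete $v$ together with its neighbourhood from $G$, and show that the leftover graph $G'=G-v-N(v)$ has enough vertices and enough structure that $\overline{G'}$ contains a $C_m$, contradicting Claim~2 (since $v$ is nonadjacent to every vertex of $G'$). So the proof will be a counting argument: assume $d_{B-x}(v)$ exceeds the claimed bound in each of the three cases, and in each case derive a lower bound on $\nu(G')$ and a lower bound on the order of $B-v-N(v)$ (or of the relevant end-blocks), then invoke a cycle-finding lemma.

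**Key steps.** First I would record the consequences of the extremal choices of $B$, $x'$ and $B'$: because $B$ has the maximum order among end-blocks and $\nu(B)\ge\lceil m/2\rceil+1$, any other end-block also has order at least $\delta(G)+1\ge\lceil n/2\rceil-s+2$; and because $x'$ maximises the distance from $x$, every cut-vertex other than $x$ lies ``beyond'' $x'$ in a suitable sense, so the part of $G$ on the far side of $x'$ contains at least two end-blocks (or, in the degenerate cases, $B'$ itself plays that role). Second, assuming $d_{B-x}(v)$ is one more than the claimed bound, I compute $\nu(G')=\nu(G)-1-d(v)$ where $d(v)=d_{B-x}(v)+1$ (the $+1$ for the edge to $x$, if present — one has to be a little careful whether $v$ is adjacent to $x$), using $\nu(G)=m+n-s$ and Claim~1's bound $s\le\lfloor(n+5)/4\rfloor$, to show $\nu(G')\ge m$. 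Third — and this is where the three cases genuinely diverge — I would argue that $G'$ is disconnected with a favourable distribution of component orders: $B-v-N(v)$ sits on one side, while $B'-x'$ (or two whole end-blocks beyond $x'$) sit on the other, each of guaranteed size. The three different bounds ($\lceil\cdot\rceil-2$, $\lfloor\cdot\rfloor-2$, $\lceil\cdot\rceil-3$) should fall out precisely from how much ``room'' the structure between $x$ and $x'$ gives: when $x=x'$ the two sides are $B-x$ and $B'-x$ sharing the single vertex $x$; when $xx'$ is a cut-edge there is one extra vertex and a parity shift; otherwise there is a longer internal path giving one more unit of slack, hence the $-3$. Finally, with $\nu(G')\ge m$ and the component-order condition verified, Lemma~6 (or Lemma~3 if every piece is small) yields a $C_m$ in $\overline{G'}$, contradicting Claim~2.

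**Main obstacle.** The hard part will be the bookkeeping that ties the arithmetic bound to the block structure uniformly across the three cases — in particular pinning down exactly which vertices survive in $G'$ on the far side of $x$. One must be sure that deleting $N(v)$ (which lies inside $B$, except possibly $x$ itself) does not eat into $B'$ or the other far end-blocks, so that their sizes $\ge\delta(G)+1$ are preserved; the only vertex at risk is the cut-vertex $x$, and whether $x\in N(v)$ is exactly what separates the $x=x'$ case from the others. A secondary subtlety is verifying hypothesis~(2) of Lemma~6 (the order sum of every $\omega(G')-1$ components is at least $m+\lfloor n/2\rfloor-\nu(G')$): this needs the two-end-blocks-beyond-$x'$ observation in the generic case, and is why we must bound the distance from $x$ to $x'$ rather than just pick an arbitrary second block. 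Once the structure is nailed down, the inequalities themselves are routine floor/ceiling manipulations of the type already appearing in Subcase~1.2, using $m\ge 2n+1$ and $n\ge 5$ to absorb the small error terms.
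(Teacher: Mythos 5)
Your proposed mechanism for proving Claim 4 is the wrong way round, and this is a genuine gap rather than a bookkeeping issue. The deletion argument ($G'=G-v-N(v)$, then Lemma 6 and Claim 2 give a $C_m$ in $\overline{G'}$) derives its contradiction from the deleted set being \emph{small}: the smaller $d(v)$ is, the larger $\nu(G')$ is and the easier it is to satisfy $\nu(G')\geq m$ and the component-sum hypothesis of Lemma 6. If you instead assume $d_{B-x}(v)$ exceeds the claimed bound, you are deleting \emph{more} vertices, so nothing forces a contradiction --- and indeed your argument never actually uses the assumed largeness of $d_{B-x}(v)$, nor the fact that $v$ was chosen to minimize $d_{B-x}$. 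What you have sketched is essentially the way Claim 4 is \emph{applied} in Subsubcases 2.1.1--2.1.3 (where the upper bound on $d_{B-x}(v)$ is fed into $\nu(G')=\nu(G)-2-d_{B-x}(v)$ to get $\nu(G')\geq m$), not a proof of the claim itself.

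The paper's actual proof of Claim 4 produces a $P_n$ in $G$, not a $C_m$ in a complement. Setting $a=0,1,2$ in the three cases, there is a path of length at least $a$ from $x$ to $x'$; by Claim 3 and Lemma 2, $B'$ contains a path from $x'$ of order at least $\lceil n/2\rceil-s+2$, so $G-(B-x)$ contains a path from $x$ of order at least $\lceil n/2\rceil-s+a+2$. If $\delta(B-x)\geq\lfloor(\lfloor n/2\rfloor+s-a-1)/2\rfloor$, then Lemma 2(7) (an Erd\H{o}s--Gallai/Dirac-type long-path lemma, applicable since $\nu(B)\geq\lceil m/2\rceil+1\geq\lfloor n/2\rfloor+s-a-1$) gives a path from $x$ inside $B$ of order at least $\lfloor n/2\rfloor+s-a-1$; concatenating the two paths at $x$ yields a $P_n$, a contradiction. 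Hence $d_{B-x}(v)=\delta(B-x)\leq\lfloor(\lfloor n/2\rfloor+s-a-1)/2\rfloor-1$, which is exactly the three displayed values. So the three cases encode the guaranteed extra length $a$ of the $x$--$x'$ path outside $B$, not a difference in how components of $G'$ distribute; without the long-path lemma applied inside $B$, the bound cannot be obtained.
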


\begin{proof}
We set a parameter $a$ such that $a=0$ if $x=x'$, 1 if $xx'$ is a
cut-edge of $G$, and 2 otherwise. So there is a path between $x$ and
$x'$ of length at least $a$.

By Claim 3 and Lemma 2, $B'$ contains a path from $x'$ of order at
least $\lceil n/2\rceil-s+2$, and $G-(B-x)$ contains a path from $x$
of order at least $\lceil n/2\rceil-s+a+2$.

Note that $\nu(B)\geq\lceil m/2\rceil+1\geq\lfloor
n/2\rfloor+s-a-1$. If $\delta(B-x)\geq\lfloor(\lfloor
n/2\rfloor+s-a-1)/2\rfloor$, then by Lemma 2, $B$ contains a path
from $x$ of order at least $\lfloor n/2\rfloor+s-a-1$. Thus $G$
contains a $P_n$, a contradiction. This implies that
$$\delta(B-x)\leq\left\lfloor\frac{\lfloor
n/2\rfloor+s-a-1}{2}\right\rfloor-1
=\left\lceil\frac{n+2s-\pa(n)-2a}{4}\right\rceil-2.
$$
Thus the claim holds.
\end{proof}

Note that
\begin{align*}
        & \nu(B-x-v-N(v))=\nu(B)-2-d_{B-x}(v)\\
\geq    & \left\lceil\frac{m}{2}\right\rceil+1-2
            -\left\lceil\frac{n+2s-\pa(n)}{4}\right\rceil+2\\
\geq    & \left\lceil\frac{m}{2}-\frac{n+2s-\pa(n)+2}{4}\right\rceil
            +1\geq 1.
\end{align*}
This implies that $V(B)\backslash\{x,v\}\backslash
N(v)\neq\emptyset$.

\begin{subsubcase}
  $x=x'$.
\end{subsubcase}

In this case, $G$ has only one cut-vertex $x$. Let $G'=G-x-v-N(v)$.
Then $G'$ is disconnected and
\begin{align*}
\nu(G') & =\nu(G)-2-d_{B-x}(v)\\
        & \geq m+n-s-2-\left\lceil\frac{n+2s-\pa(n)}{4}\right\rceil+2\\
        & =m+\left\lfloor\frac{3n+\pa(n)-6s}{4}\right\rfloor\geq m.
\end{align*}

Let $\mathcal {H}$ be the union of any $\omega(G')-1$ components of
$G'$. We will prove that $\nu(\mathcal{H})\geq m+\lfloor
n/2\rfloor-\nu(G')$.

If $B'-x\not\subset\mathcal{H}$, then
$\nu(\mathcal{H})=\nu(G')-\nu(B'-x)\geq\nu(G')-
\lfloor(\nu(G)-1)/2\rfloor$, and
\begin{align*}
        & \nu(\mathcal{H})+\nu(G')-m-\left\lfloor\frac{n}{2}\right\rfloor
            \geq\nu(G')-\left\lfloor\frac{\nu(G)-1}{2}\right\rfloor+\nu(G')
            -m-\left\lfloor\frac{n}{2}\right\rfloor\\
\geq    &
            2\left(m+\left\lfloor\frac{3n+\pa(n)-6s}{4}\right\rfloor\right)
            -\left\lfloor\frac{m+n-s-1}{2}\right\rfloor-m
            -\left\lfloor\frac{n}{2}\right\rfloor\\
\geq    & \left\lceil m+2\cdot\frac{3n-6s-2}{4}
            -\frac{m+n-s-1}{2}-\frac{n}{2}\right\rceil\\
=       & \left\lceil\frac{m+n-5s-1}{2}\right\rceil\geq
            \left\lceil\frac{3n-5s}{2}\right\rceil\geq 0.
\end{align*}

Now we assume that $B'-x\subset\mathcal{H}$. In this case
$\nu(\mathcal{H})\geq\nu(B'-x)\geq\delta(G)$, and
\begin{align*}
        & \nu(\mathcal{H})+\nu(G')-m-\left\lfloor\frac{n}{2}\right\rfloor
            \geq \delta(G)+\nu(G')-m-\left\lfloor\frac{n}{2}\right\rfloor\\
\geq    &
\left\lceil\frac{n}{2}\right\rceil-s+1+m+\left\lfloor\frac{3n+\pa(n)-6s}{4}\right\rfloor-
            m-\left\lfloor \frac{n}{2}\right\rfloor\\
\geq    & \left\lfloor\frac{3n+5\pa(n)+4-10s}{4}\right\rfloor.
\end{align*}
Note that $3n+5\pa(n)+4-10s\geq 0$ unless $n=8$ and $s=3$.

\noindent\textbf{Petty Case.} $n=8$ and $s=3$.

In this case $\nu(B'-x)\geq 2$ and $d_{B-x}(v)\leq 2$. If
$\nu(\mathcal{H})\geq 3$, or if $d_{B-x}(v)=1$, then it is easy to
see that $\nu(\mathcal{H})\geq m+\lfloor n/2\rfloor-\nu(G')$. Now we
assume that $\nu(B'-x)=\nu(\mathcal{H})=2$ and $d_{B-x}(v)=2$. This
implies that $B'$ is a triangle, there are only two blocks $B,B'$,
and every vertex in $B-x$ has degree at least 2 in $B-x$. If $B-x$
has a cut-vertex, then noting that every end-block of $B-x$ has at
least three vertices, $B$ contains a path from $x$ of order at least
6, and $G$ contains a $P_8$, a contradiction. So we assume that
$B-x$ is 2-connected.

Note that $B-x$ contains a cycle of order at least 4. Let $C$ be a
longest cycle of $B-x$. If $\nu(C)\geq 5$, then there is also an
path from $x$ in $B$ of order at least 6, a contradiction. Thus we
assume that $\nu(C)=4$. If there is a component of $B-x-C$ with
order at least 2, or if there is a vertex in $B-x-C$ adjacent to two
consecutive vertices on $C$, then it is easy to find a cycle longer
than $C$. Thus $B-x-C$ consists of isolated vertices and every
vertex is adjacent to two nonconsecutive vertices on $C$. If there
are two vertices in $B-x-C$ adjacent to different vertices on $C$,
we can also find a longer cycle. Thus all the vertices of $B-x-C$
have the same neighbors on $C$. This implies that $B-x-v-N(v)$ is
disconnected and then $\nu(\mathcal{H})\geq\nu(B'-x)+1=3$. Thus we
also have $\nu(\mathcal{H})\geq m+\lfloor n/2\rfloor-\nu(G')$.

By Lemma 6, $\overline{G'}$ contains a $C_m$, a contradiction.

\begin{subsubcase}
  $xx'$ is a cut-edge of $G$ and there is only one end-block containing $x'$.
\end{subsubcase}

By Claim 4, $d_{B-x}(v)\leq\lfloor(n+2s-\pa(n))/4\rfloor-2$. Let
$G'=G-x-v-N(v)$. Then
\begin{align*}
\nu(G') &=\nu(G)-2-d_{B-x}(v)\\
        & \geq m+n-s-2-\left\lfloor\frac{n+2s-\pa(n)}{4}\right\rfloor+2\\
        & =m+\left\lceil\frac{3n+\pa(n)-6s}{4}\right\rceil\geq m.
\end{align*}

Now let $\mathcal {H}$ be the union of any $\omega(G')-1$ components
of $G'$. If $B'\subset\mathcal {H}$, then
$\nu(\mathcal{H})\geq\nu(B')\geq\delta(G)+1$, and
\begin{align*}
        & \nu(\mathcal{H})+\nu(G')-m-\left\lfloor\frac{n}{2}\right\rfloor
            \geq\delta(G)+1+\nu(G')-m-\left\lfloor\frac{n}{2}\right\rfloor\\
\geq    & \left\lceil\frac{n}{2}\right\rceil-s+2+
            m+\left\lceil\frac{3n+\pa(n)-6s}{4}\right\rceil-
            m-\left\lfloor\frac{n}{2}\right\rfloor\\
=       & \left\lceil\frac{3n+5\pa(n)+8-10s}{4}\right\rceil\geq 0.
\end{align*}
If $B'\not\subset\mathcal{H}$, then
$\nu(\mathcal{H})=\nu(G')-\nu(B')\geq\nu(G')-\lfloor\nu(G)/2\rfloor$,
and
\begin{align*}
        &   \nu(\mathcal{H})+\nu(G')-\left\lfloor\frac{n}{2}\right\rfloor-m
            \geq\nu(G')-\left\lfloor\frac{\nu(G)}{2}\right\rfloor+\nu(G')
            -m-\left\lfloor\frac{n}{2}\right\rfloor\\
\geq    &
2\left(m+\left\lceil\frac{3n+\pa(n)-6s}{4}\right\rceil\right)-
            \left\lfloor\frac{m+n-s}{2}\right\rfloor
            -m-\left\lfloor\frac{n}{2}\right\rfloor\\
\geq    & \left\lceil m+2\cdot\frac{3n+\pa(n)-6s}{4}
            -\frac{m+n-s}{2}-\frac{n}{2}\right\rceil\\
=       & \left\lceil\frac{m+n+\pa(n)-5s}{2}\right\rceil\geq
            \left\lceil\frac{3n+\pa(n)+1-5s}{2}\right\rceil\geq 0.
\end{align*}

By Lemma 6, $\overline{G'}$ contains a $C_m$, a contradiction.

\begin{subsubcase}
  $xx'\notin E(G)$, or $xx'$ is not a cut-edge of $G$, or there
  are at least two end-blocks containing $x'$.
\end{subsubcase}

Let $G'=G-x-x'-v-N(v)$. Note that in this case $\omega(G')\geq 3$,
and we have
\begin{align*}
    \nu(G') &=\nu(G)-3-d_{B-x}(v)\\
    & \geq m+n-s-3-\left\lfloor\frac{n+2s-\pa(n)}{4}\right\rfloor+2\\
    & =m+\left\lceil\frac{3n+\pa(n)-6s-4}{4}\right\rceil\geq m.
\end{align*}

Now let $\mathcal {H}$ be the union of any $\omega(G')-1$ components
of $G'$. If $B'-x'\subset\mathcal {H}$, then noting that
$\omega(G')\geq 3$,
$\nu(\mathcal{H})\geq\nu(B'-x')+1\geq\delta(G)+1$, and
\begin{align*}
        & \nu(\mathcal{H})+\nu(G')-m-\left\lfloor\frac{n}{2}\right\rfloor
            \geq\delta(G)+1+\nu(G')-m-\left\lfloor\frac{n}{2}\right\rfloor\\
\geq    & \left\lceil\frac{n}{2}\right\rceil-s+2+
            m+\left\lceil\frac{3n+\pa(n)-6s-4}{4}\right\rceil-
            m-\left\lfloor\frac{n}{2}\right\rfloor\\
=       & \left\lceil\frac{3n+5\pa(n)+4-10s}{4}\right\rceil\geq 0.
\end{align*}
If $B'-x'\not\subset\mathcal{H}$, then
$\nu(\mathcal{H})=\nu(G')-\nu(B'-x')\geq\nu(G')-\lfloor\nu(G)/2\rfloor+1$,
and
\begin{align*}
        & \nu(\mathcal{H})+\nu(G')-m-\left\lfloor\frac{n}{2}\right\rfloor
            \geq\nu(G')-\left\lfloor\frac{\nu(G)}{2}\right\rfloor+1+\nu(G')
            -m-\left\lfloor\frac{n}{2}\right\rfloor\\
\geq    &
            2\left(m+\left\lceil\frac{3n+\pa(n)-6s-4}{4}\right\rceil\right)
            -\left\lfloor\frac{m+n-s}{2}\right\rfloor+1
            -m-\left\lfloor\frac{n}{2}\right\rfloor\\
\geq    & \left\lceil m+2\cdot\frac{3n-6s-4}{4}-\frac{m+n-s}{2}+1
            -\frac{n}{2}\right\rceil\\
=       & \left\lceil\frac{m+n-5s-2}{2}\right\rceil\geq
            \left\lceil\frac{3n-5s-1}{2}\right\rceil\geq 0.
\end{align*}

By Lemma 6, $\overline{G'}$ contains a $C_m$, a contradiction.

\begin{subcase}
  Every end-block of $G$ has order at most $\lceil m/2\rceil$.
\end{subcase}

\begin{claim}
Let $G'$ be a disconnected subgraph of $G$. If
\begin{mathitem}
\item[(a)] $\nu(G')\geq m$; and
\item[(b)] there are two components of $G'$, each of which is an end-block
removing a cut-vertex of $G$ contained in the end-block,
\end{mathitem}
then the order sum of every $\omega(G')-1$ components in $G'$ is
$m+\lfloor n/2\rfloor-\nu(G')$.
\end{claim}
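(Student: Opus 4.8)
The plan is to reduce the statement to a single inequality about the largest component of $G'$. Let $C_0$ be a component of $G'$ of maximum order. Any choice of $\omega(G')-1$ components of $G'$ amounts to deleting one component, and the resulting order sum is smallest when the deleted component is $C_0$; hence it suffices to prove $\nu(G')-\nu(C_0)\geq m+\lfloor n/2\rfloor-\nu(G')$, that is, $2\nu(G')-\nu(C_0)\geq m+\lfloor n/2\rfloor$. (With this inequality in hand, hypothesis (a) together with Lemma 6 gives a $C_m$ in $\overline{G'}$, which is how the claim is meant to be used.)

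Next I would fix the two components guaranteed by hypothesis (b), say $D_1=B_1-b_1$ and $D_2=B_2-b_2$, where $B_1,B_2$ are end-blocks of $G$ and $b_i$ is the cut-vertex of $G$ contained in $B_i$. I record a lower and an upper bound for $\nu(D_i)$. For the upper bound, the standing assumption of Subcase 2.2 gives $\nu(B_i)\leq\lceil m/2\rceil$, so $\nu(D_i)\leq\lceil m/2\rceil-1$. For the lower bound, observe that any $v\in B_i-b_i$ is a non-cut-vertex of $G$ lying in no block other than $B_i$, so all of its neighbours lie in $B_i$; thus $d_{B_i}(v)=d_G(v)\geq\delta(G)$, forcing $\nu(B_i)\geq\delta(G)+1$ and hence $\nu(D_i)\geq\delta(G)\geq\lceil n/2\rceil-s+1$ by Claim 3.

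Now I split according to whether $C_0\in\{D_1,D_2\}$. If $C_0\in\{D_1,D_2\}$, then $\nu(C_0)\leq\lceil m/2\rceil-1$, so $2\nu(G')-\nu(C_0)\geq 2m-\lceil m/2\rceil+1=m+\lfloor m/2\rfloor+1\geq m+\lfloor n/2\rfloor$, using $m\geq 2n+1$. If $C_0\notin\{D_1,D_2\}$, then $C_0,D_1,D_2$ are three distinct components of $G'$, so $\nu(G')-\nu(C_0)\geq\nu(D_1)+\nu(D_2)\geq 2\delta(G)\geq 2\lceil n/2\rceil-2s+2=n+\pa(n)-2s+2$; combining this with $\nu(G')\geq m$ yields $2\nu(G')-\nu(C_0)\geq m+n+\pa(n)-2s+2$, and it remains to verify $n+\pa(n)-2s+2\geq\lfloor n/2\rfloor$, equivalently $n+3\pa(n)+4\geq 4s$.

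The one step that requires genuine care is this last inequality, and it is where the bound $s\leq\lfloor(n+5)/4\rfloor$ from Claim 1 is actually invoked: for $n$ odd one gets $4s\leq n+5\leq n+7=n+3\pa(n)+4$, and for $n$ even one checks $4s\leq n+4=n+3\pa(n)+4$ by examining $n$ modulo $4$ (the bound being essentially tight when $n\equiv 0\pmod 4$). Everything else is immediate, so this arithmetic verification is the main — and essentially the only — obstacle.
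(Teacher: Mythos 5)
Your proof is correct and follows essentially the same route as the paper's: you bound the excluded component from above by $\lceil m/2\rceil-1$ when it is one of the two special components $B-x$, $B'-x'$, and otherwise keep both and bound their total order from below by $2\delta(G)\geq 2(\lceil n/2\rceil-s+1)$, ending with the same two inequalities (your $n+3\pa(n)+4\geq 4s$ is exactly the paper's $\lceil n/2\rceil+\pa(n)+2-2s\geq 0$). The only difference is cosmetic: you minimize over the choice of excluded component first, while the paper argues for an arbitrary choice.
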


\begin{proof}
Let $B-x$ and $B'-x'$ be two components of $G'$, where $B,B'$ are
two end-blocks of $G$ and $x,x'$ are two cut-vertices of $G$
contained in $B$ and $B'$, respectively.

Let $\mathcal {H}$ be the union of any $\omega(G')-1$ components of
$G'$. We first assume that both $B-x$ and $B'-x'\subset\mathcal
{H}$. Then $\nu(\mathcal{H})\geq\nu(B-x)+\nu(B'-x')\geq 2\delta(G)$,
and
\begin{align*}
        & \nu(\mathcal{H})+\nu(G')-m-\left\lfloor\frac{n}{2}\right\rfloor
            \geq 2\delta(G)+\nu(G')-m-\left\lfloor\frac{n}{2}\right\rfloor\\
\geq    & 2\left(\left\lceil\frac{n}{2}\right\rceil-s+1\right)+
            m-m-\left\lfloor\frac{n}{2}\right\rfloor\\
=       & \left\lceil\frac{n}{2}\right\rceil+\pa(n)+2-2s\geq 0.
\end{align*}
Now we assume that $\mathcal{H}$ does not contain $B-x$ or $B'-x'$.
Without loss of generality, we assume that $\mathcal{H}$ does not
contain $B-x$. Then
$\nu(\mathcal{H})=\nu(G')-\nu(B-x)\geq\nu(G')-\lceil m/2\rceil+1$,
and
\begin{align*}
        & \nu(\mathcal{H})+\nu(G')-m-\left\lfloor\frac{n}{2}\right\rfloor
            \geq\nu(G')-\left\lceil\frac{m}{2}\right\rceil+1+\nu(G')-
            m-\left\lceil\frac{n}{2}\right\rceil\\
\geq    & 2m-\left\lceil\frac{m}{2}\right\rceil+1-
            m-\left\lfloor\frac{n}{2}\right\rfloor
            =\left\lfloor\frac{m}{2}\right\rfloor-
            \left\lfloor\frac{n}{2}\right\rfloor+1\geq 0.
\end{align*}

Thus the claim holds.
\end{proof}

\begin{subsubcase}
  $G$ has only two end-blocks.
\end{subsubcase}

Let $B$ and $B'$ be the two end-blocks of $G$, and let $x$ and $x'$
be the cut-vertices of $G$ contained in $B$ and $B'$, respectively.
Note that
$$\nu(G)-\nu(B)-\nu(B')\geq
m+n-s-2\cdot\left\lceil\frac{m}{2}\right\rceil=n-s-\pa(m)\geq 1.$$
This implies that $V(G)\backslash V(B)\backslash
V(B')\neq\emptyset$.

Note that in this case $G-(B-x)-(B'-x')+xx'$ is 2-connected. If
every vertex in $G-B-B'$ has degree at least $2s-\pa(n)-3$, then by
Lemma 2, there is a path from $x$ to $x'$ of order at least
$2s-\pa(n)-2$. Note that $B$ contains a path from $x$ of order at
least $\lceil n/2\rceil-s+2$, and $B'$ contains a path from $x'$ of
order at least $\lceil n/2\rceil-s+2$. Thus $G$ contains a $P_n$, a
contradiction. This implies that there is a vertex $v$ in $G-B-B'$
with $d(v)\leq 2s-\pa(n)-4$.

Let $G'=G-x-x'-v-N(v)$. Then
\begin{align*}
\nu(G') & \geq\nu(G)-3-d(v)\\
        & \geq m+n-s-3-2s+\pa(n)+4\\
        & =m+n+\pa(n)+1-3s\geq m.
\end{align*}

By Claim 5, the order sum of every $\omega(G')-1$ components in $G'$
is at least $m+\lfloor n/2\rfloor-\nu(G')$. By Lemma 6,
$\overline{G'}$ contains a $C_m$, a contradiction.

\begin{subsubcase}
  $G$ has at least three end-blocks.
\end{subsubcase}

Let $x$ and $x'$ be two cut-vertices of $G$ such that the longest
path between $x$ and $x'$ in $G$ is as long as possible. Clearly $x$
and $x'$ are both contained in some end-blocks. Let $B$ and $B'$ be
two end-blocks of $G$ containing $x$ and $x'$, respectively. Let $v$
be a vertex in $V(B-x)\cup V(B'-x')$ such that $d_{G-x-x'}(v)$ is as
small as possible. We assume without loss of generality that $v\in
V(B-x)$.

\begin{claim}
$$d_{B-x}(v)\leq\left\{\begin{array}{ll}
\lfloor n/2\rfloor-2,   & \mbox{if } x=x';\\
\lceil n/2\rceil-3,     & \mbox{if } xx' \mbox{ is a cut-edge of } G;\\
\lfloor n/2\rfloor-3,   & \mbox{otherwise}.
\end{array}\right.$$
\end{claim}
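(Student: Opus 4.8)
The plan is a proof by contradiction: assuming $G$ contains no $P_n$, I will build a path of order at least $n$ by stitching together a long path inside $B$, a short connector from $x$ to $x'$, and a long path inside $B'$. Write $d=d_{B-x}(v)$. Since $B$ is an end-block with cut-vertex $x$, every $w\in V(B-x)$ satisfies $N_G(w)\subseteq V(B)$, so $d_{G-x-x'}(w)=d_{B-x}(w)$, and likewise inside $B'$. As $v$ was chosen to minimise $d_{G-x-x'}$ over $V(B-x)\cup V(B'-x')$ and $v\in V(B-x)$, this gives $\delta(B-x)=d$ and $\delta(B'-x')\geq d$. Recall that $\delta(G)\geq 2$ (Claims 1 and 3); hence $\nu(B),\nu(B')\geq 3$, both end-blocks are $2$-connected, and $\nu(B)\geq\delta(B-x)+2=d+2$ while $\nu(B')\geq d+2$.

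First I would produce a long path inside each end-block. Applying Lemma 2(7) to $B$ with distinguished vertex $x$ and parameter $2d+1$ (the hypothesis $d_{B-x}(w)\geq d=\lfloor(2d+1)/2\rfloor$ holds for every $w\neq x$), I obtain a path $P_B$ of $B$ starting at $x$ of order at least $\min\{\nu(B),2d+1\}$; since $\nu(B)\geq d+2$ and $2d+1\geq d+2$, this order is at least $d+2$. The same argument applied to $B'$, using $\delta(B'-x')\geq d$, yields a path $P_{B'}$ of $B'$ starting at $x'$ of order at least $d+2$.

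Next I would treat the connector and the gluing. Put $a=0$ if $x=x'$, $a=1$ if $xx'$ is a cut-edge of $G$, and $a=2$ otherwise; in each case $G$ contains an $x$-$x'$ path $Q$ of length at least $a$: the trivial one-vertex path when $a=0$, the edge $xx'$ when $a=1$, and when $a=2$ either $xx'\notin E(G)$ so every $x$-$x'$ path already has length at least $2$, or $xx'\in E(G)$ is not a cut-edge so $G-xx'$ is connected and supplies such a $Q$. Because $B$ (resp. $B'$) is a pendant end-block attached at $x$ (resp. $x'$), any $x$-$x'$ path in $G$ meets $V(B)$ only in $x$ and $V(B')$ only in $x'$, and $V(B)\cap V(B')\subseteq\{x\}\cap\{x'\}$ since distinct blocks share at most a cut-vertex. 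Thus $P_B$, $Q$, $P_{B'}$ are internally disjoint and overlap only in $x$ (shared by $P_B$ and $Q$) and $x'$ (shared by $Q$ and $P_{B'}$), so the concatenation of the reverse of $P_B$, then $Q$, then $P_{B'}$ is a path of $G$ of order at least $|P_B|+|P_{B'}|+(a+1)-2\geq 2(d+2)+a-1=2d+a+3$.

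Finally, the three bounds asserted are exactly $\lfloor(n-a-4)/2\rfloor$ for $a=0,1,2$ (indeed $\lfloor(n-4)/2\rfloor=\lfloor n/2\rfloor-2$, $\lfloor(n-5)/2\rfloor=\lceil n/2\rceil-3$, and $\lfloor(n-6)/2\rfloor=\lfloor n/2\rfloor-3$). So if the claim failed, then $d\geq\lfloor(n-a-4)/2\rfloor+1=\lfloor(n-a-2)/2\rfloor\geq(n-a-3)/2$, whence the path produced above has order at least $2d+a+3\geq n$ — a $P_n$ in $G$, contradicting our standing assumption. I expect the only delicate part to be the disjointness bookkeeping for $Q$ (the pendant-end-block observations, and the three sub-cases guaranteeing a path of length at least $a$) and the matching of $a$ to the stated case split; the arithmetic is then immediate, and the small values of $n$ for which the bound is $0$ or negative need no separate treatment, since there the mere existence of $v$ already forces $d\geq\delta(G)-1\geq 1$ and hence $2d+a+3\geq n$.
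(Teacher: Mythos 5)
Your proposal is correct and follows essentially the same route as the paper: both set the parameter $a\in\{0,1,2\}$, use the minimality of $d_{G-x-x'}(v)$ to bound $\delta(B-x)$ and $\delta(B'-x')$ simultaneously, invoke Lemma 2 to extract a path from $x$ in $B$ and from $x'$ in $B'$ of order roughly $\lfloor(n-a)/2\rfloor+1$, and concatenate them through an $x$--$x'$ path of length at least $a$ to produce a $P_n$. Your write-up merely makes explicit some details the paper leaves implicit (the identification $d_{G-x-x'}(w)=d_{B-x}(w)$ for $w\in V(B-x)$, the disjointness of the three pieces, and the existence of the connector in each of the three cases), so no further comparison is needed.
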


\begin{proof}
We set a parameter $a$ such that $a=0$ if $x=x'$, 1 if $xx'$ is a
cut-edge of $G$, and 2 otherwise. So there is a path between $x$ and
$x'$ of length at least $a$.

If $\delta(B-x)\geq\lfloor(n-a)/2\rfloor-1$, then
$\delta(B'-x')\geq\lfloor(n-a)/2\rfloor-1$. By Lemma 2, $B$ contains
a path from $x$ of order at least $\lfloor(n-a)/2\rfloor+1$ and $B'$
contains a path from $x'$ of order at least
$\lfloor(n-a)/2\rfloor+1$. Thus $G$ contains a path of order at
least $n+1-\pa(n-a)\geq n$, a contradiction. Now we obtain that
$\delta(B-x)\leq\lfloor(n-a)/2\rfloor-2$.
\end{proof}

\begin{subsubsubcase}
  $x=x'$.
\end{subsubsubcase}

Let $G'=G-x-v-N(v)$. Then
\begin{align*}
\nu(G') & =\nu(G)-2-d_{B-x}(v)\\
        & \geq m+n-s-2-\left\lfloor\frac{n}{2}\right\rfloor+2\\
        & =m+\left\lceil\frac{n}{2}\right\rceil-s\geq m.
\end{align*}

Note that every end-block of $G$ other than $B$ removing $x$ is a
component of $G'$. By Claim 5 and Lemma 6, $\overline{G'}$ contains
a $C_m$, a contradiction.

\begin{subsubsubcase}
  $xx'$ is a cut-edge of $G$.
\end{subsubsubcase}

In this case, $G$ has the only two cut-vertices $x$ and $x'$. Let
$G'=G-x-x'-v-N(v)$. Then
\begin{align*}
\nu(G') & =\nu(G)-3-d_{B-x}(v)\\
        & \geq m+n-s-3-\left\lceil\frac{n}{2}\right\rceil+3\\
        & =m+\left\lfloor\frac{n}{2}\right\rfloor-s\geq m.
\end{align*}

Note that every end-block of $G$ other than $B$ removing $x$ or $x'$
is a component of $G'$. By Claim 5 and Lemma 6, $\overline{G'}$
contains a $C_m$, a contradiction.

\begin{subsubsubcase}
  $xy\notin E(G)$ or $xy$ is not a cut-edge of $G$.
\end{subsubsubcase}

Let $B''$ be an end-block of $G$ other than $B$ and $B'$, and let
$x''$ be the cut-vertex of $G$ contained in $B''$ (possibly $x''=x$
or $x'$). Let $G'=G-x-x'-x''-v-N(v)$. Then
\begin{align*}
\nu(G') & \geq\nu(G)-4-d_{B-x}(v)\\
        & \geq m+n-s-4-\left\lfloor\frac{n}{2}\right\rfloor+3\\
        & =m+\left\lceil\frac{n}{2}\right\rceil-s-1\geq m.
\end{align*}

Note that $B'-x'$ and $B''-x''$ are two components of $G'$. By Claim
6 and Lemma 6, $\overline{G'}$ contains a $C_m$, a contradiction.

\begin{case}
  $G$ is 2-connected.
\end{case}

By Claim 3 and Lemma 2, $G$ contains a cycle of order at least
$2(\lceil n/2\rceil-s+1)=n-2s+\pa(n)+2$. Let $C$ be a longest cycle
of $G$ (with a given orientation). Suppose that $\nu(C)=n-r$, where
$$r\leq 2s-\pa(n)-2.$$

Let $H$ be a subgraph of a component of $G-C$, and let
$N_C(H)=\{z_1,z_2,\ldots z_k\}$, where $k=d_C(H)$, and $z_i$, $1\leq
i\leq k$, are in order along $C$. We call the subpath
$\overrightarrow{C}[z_i,z_{i+1}]$ (the indices are taken modulo $k$)
a \emph{good segment} of $C$ (with respect to $H$); moreover, if
$z_i$ and $z_{i+1}$ are joined to two distinct vertices $x,y$ in
$H$, then we call $\overrightarrow{C}[z_i,z_{i+1}]$ a \emph{better
segment} of $C$ (with respect to $H$); moreover, if there is a path
from $x$ to $y$ in $G-C$ of order at least 3, then we call
$\overrightarrow{C}[z_i,z_{i+1}]$ a \emph{best segment} of $C$ (with
respect to $H$). Since $G$ is 2-connected, we conclude that for any
component $H$ of $G-C$, there are at least two good (better, best)
segments of $C$ with respect to $H$ if $\nu(H)\geq 1$ ($\nu(H)\geq
2$, $\nu(H)\geq 3$ and $H$ is not a star, respectively). Note that
every good (better, best) segment has order at least 3 (4, 5,
respectively).

For a vertex $x$ of $C$, we use $x^+$ to denote the successor, and
$x^-$ the predecessor, of $x$ on $C$. For a subset $X$ of $V(C)$, we
set $X^+=\{x^+: x\in X\}$ and $X^-=\{x^-: x\in X\}$.

Now we consider a component $H$ of $G-C$. If $H$ is non-separable,
then $H$ is a $K_1$, a $K_2$ or 2-connected; if $H$ is separable,
then $H$ has at least two end-blocks. In the later case, we call an
end-block of $H$ removing the cut-vertex contained in the end-block
a \emph{branch} of $H$ (also, of $G-C$).

\begin{claim}
Let $H$ be a component of $G-C$ and $u\in V(H)$.
\begin{mathitem}
\item[(1)] If $H$ is non-separable, then $H$ contains a path from
$u$ of order at least $\min\{\nu(H),\lceil r/2\rceil\}$.
\item[(2)] If $H$ is separable and $D$ is a branch of $H$ not
containing $u$, then $H$ contains a path from $u$ of order at least
$\min\{\nu(D)+1,\lceil r/2\rceil\}$.
\end{mathitem}
\end{claim}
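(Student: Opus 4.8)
The claim asserts a lower bound on the length of a path from a prescribed vertex $u$ inside a component $H$ of $G-C$, and the natural tool is Lemma 2, parts (2), (3), (4) and (7). The key structural input is the maximality of the cycle $C$: if some vertex $w\in V(H)$ had small degree inside $H$ together with a long path from $w$ in $H$ touching the attachment vertices on $C$, one could splice a detour through $H$ into a good/better/best segment of $C$ and obtain a cycle longer than $C$, a contradiction. So the first step will be to convert ``$C$ is longest'' into a degree condition: for every vertex $v\in V(H)$ (with at most one exceptional vertex, namely a cut-vertex of $H$ playing the role of $x$ or $y$ in Lemma 2(2)/(3)), the degree $d_H(v)$ is at least roughly $r/2$. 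Concretely, if a good segment of $C$ with respect to $H$ has order $\ell$, then a path through $H$ of order $j$ between the two attachment points lets us replace that segment by a cycle of order $\nu(C)-\ell+2+j$ through the rest of $C$; longest-ness of $C$ forces $j\le \ell-2$, hence bounds the length of the longest $H$-path between attachments, which by Lemma 2 bounds $\delta(H)$ (or $\delta(H-x)$).

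\textbf{Part (1): $H$ non-separable.} Here $H$ is $K_1$, $K_2$, or $2$-connected. The cases $\nu(H)\le 2$ are immediate since $\lceil r/2\rceil\le \lceil(2s-\pa(n)-2)/2\rceil\le s-1$ is small; a single vertex or an edge already gives a path of order $\min\{\nu(H),\lceil r/2\rceil\}$ whenever $\lceil r/2\rceil\le 2$, and for larger $r$ one checks $s$ is large enough that the bound is still met trivially, or the $2$-connected case applies. When $H$ is $2$-connected with $\nu(H)\ge 3$: pick $u\in V(H)$ and an arbitrary second vertex $y\in V(H)\setminus\{u\}$; the graph $H+uy=H$ is already $2$-connected, so Lemma 2(4) with $n$ replaced by $\lceil r\rceil$ applies once we verify $d_H(v)\ge\lceil (\lceil r/2\rceil)/2\rceil$ for all $v\ne u,y$ — which is exactly the degree condition extracted from the longest-cycle argument (using better segments to gain one extra vertex of slack for the exceptional vertices $u,y$). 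This yields a path from $u$ of order $\min\{\nu(H),\lceil r/2\rceil\}$.

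\textbf{Part (2): $H$ separable, $D$ a branch not containing $u$.} Let $b$ be the cut-vertex of $H$ lying in the end-block $B_D$ with $D=B_D-b$, so $\nu(D)=\nu(B_D)-1$. The plan is to build the path in two pieces: first a path $P_1$ from $u$ through $H-(B_D-b)$ ending at $b$, then a path $P_2$ inside $B_D$ from $b$ of order at least $\min\{\nu(D)+1,\ldots\}$, and concatenate. For $P_2$: $B_D$ is $2$-connected, $b\in V(B_D)$, so Lemma 2(7) (with $x=b$) gives a path from $b$ of order $\min\{\nu(B_D),\lceil r/2\rceil\}=\min\{\nu(D)+1,\lceil r/2\rceil\}$, provided $d_{B_D-b}(v)\ge\lfloor \lceil r/2\rceil/2\rfloor$ for all $v\in V(B_D)\setminus\{b\}$; this degree condition again comes from longest-ness of $C$ applied to the branch $D$ (using that two best segments exist when $\nu(D)\ge 3$ and $D$ is not a star, with the $K_1,K_2$ and star subcases handled separately by the same small-$r$ triviality). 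That already gives the claimed bound, since the path $P_2$ from $b$ can be preceded by $u$ only if needed — but in fact if $u$ itself lies in $H$ we simply take the path from $u$ to $b$ guaranteed because every vertex of $H$ outside $B_D-b$ has large $H$-degree (Lemma 2(2)), then append $P_2$ and drop overlaps, getting order $\ge\min\{\nu(D)+1,\lceil r/2\rceil\}$.

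\textbf{Main obstacle.} The delicate point is the bookkeeping that turns ``$C$ longest'' into the precise degree hypotheses needed by Lemma 2 parts (4) and (7): one must be careful that the ``good/better/best segment'' has order exactly large enough, that the two guaranteed segments are genuinely distinct (so the detour uses the rest of $C$, not the segment being replaced), and that the one or two exceptional vertices allowed by Lemma 2(2)/(3)/(4) ($u$, $y$, or the cut-vertex $b$) are correctly accounted for — i.e. that the slack between a \emph{better} segment (order $\ge 4$) and a \emph{good} segment (order $\ge 3$) is what absorbs the exceptional vertex. The edge cases where $H$ or $D$ is a $K_1$, $K_2$, or a star must be dispatched first, using only that $\lceil r/2\rceil$ is bounded by a small function of $s$ (from Claim 1), so that the asserted path order is attained trivially; the substance of the argument is entirely in the generic $2$-connected subcase.
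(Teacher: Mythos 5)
Your toolbox is the right one (Lemma 2 with exceptional vertices, plus the good/better/best segments), but the central step --- producing the degree hypothesis that Lemma 2 needs --- is not correctly identified, and the mechanism you describe for it points in the wrong direction. You propose to bound the length of $H$-paths between attachment points from above (``longest-ness of $C$ forces $j\le\ell-2$'') and then let Lemma 2 ``bound $\delta(H)$''; but that contrapositive only yields an \emph{upper} bound on degrees in $H$, whereas what the claim requires is a \emph{lower} bound. The actual argument runs the other way: for a pair $u,v\in V(H)$ with $uv$ not a cut-edge of $H$, every vertex of $N_C(\{u,v\})$ opens a good segment of order at least $3$ if it sees one of $u,v$ and a best segment of order at least $5$ if it sees both (here the $u$--$v$ path of order $\ge 3$ in $H$ is what upgrades the segment to best), so $d_C(u)+d_C(v)\le\lfloor(n-r)/2\rfloor$; combining this with $d(u)+d(v)\ge 2\delta(G)\ge 2(\lceil n/2\rceil-s+1)$ from Claim 3 and with $r\le 2s-\pa(n)-2$ and Claim 1 gives $d_H(u)+d_H(v)\ge\lceil r/2\rceil$. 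Your write-up never invokes $\delta(G)$ at all, and without it no lower bound on $H$-degrees is available: a single-vertex count only gives $d_H(v)\ge\delta(G)-\lfloor(n-r)/2\rfloor$, which is $\le 0$ in the relevant range, so your stated target ``$d_H(v)$ at least roughly $r/2$ for all but one vertex'' is not just unproved but generally false.

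Two further consequences of missing the Ore-type formulation. First, the exceptional vertex cannot be ``an arbitrary second vertex $y$'': it must be the vertex $u'$ of minimum $H$-degree, since only then does $d_H(u')+d_H(v)\ge\lceil r/2\rceil$ force $d_H(v)\ge\lceil\lceil r/2\rceil/2\rceil$ for all $v\ne u'$; with an arbitrary $y$ the degree hypothesis of Lemma 2(4) can fail at $u'$. Second, in part (2) you apply Lemma 2(7) with $x=b$, which tolerates only the single exceptional vertex $b$; but the minimum-degree vertex $u'$ of $B-b$ must also be excused, so the correct tool is Lemma 2(4) applied to $B$ with the two exceptional vertices $b$ and $u'$ (and then any $u$--$b$ path in $H-(B-b)$ is prepended --- no degree condition outside $B$ is needed, contrary to your appeal to Lemma 2(2) there). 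The small cases $\nu(H)\le 2$ and $\nu(B)=2$ are indeed trivial, but for the simpler reason that the claimed bound is capped by $\nu(H)$ (resp.\ $\nu(D)+1$), not because $\lceil r/2\rceil$ is small; no separate treatment of stars is needed in this claim.
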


\begin{proof}
We first claim that for any two vertices $u,v\in V(H)$,
$d_H(u)+d_H(v)\geq \lceil r/2\rceil$, unless $uv$ is a cut-edge of
$H$. Assume that $uv$ is not a cut-edge of $H$. Then $H$ contains a
path from $u$ to $v$ of order at least 3. Let
$N_C(\{u,v\})=\{z_1,z_2,\ldots,z_k\}$, where $z_i$, $1\leq i\leq k$,
are in order along $C$. If $z_i$ is joined to exactly one vertex of
$u,v$, then $\overrightarrow{C}[z_i,z_{i+1}]$ is a good segment of
$C$ with respect to $\{u,v\}$; if $z_i$ is adjacent to both $u$ and
$v$, then $\overrightarrow{C}[z_i,z_{i+1}]$ is a best segment with
respect to $\{u,v\}$. This implies that
$d_C(u)+d_C(v)\leq\lfloor(n-r)/2\rfloor$ and
\begin{align*}
        & d_H(u)+d_H(v)=d(u)+d(v)-d_C(u)-d_C(v)\\
\geq    & 2\cdot\left(\left\lceil\frac{n}{2}\right\rceil-s+1\right)
            -\left\lfloor\frac{n-r}{2}\right\rfloor
            =\left\lceil\frac{n+r}{2}\right\rceil+\pa(n)+2-2s\\
\geq    & \left\lceil\frac{n}{2}\right\rceil
            +\left\lceil\frac{r}{2}\right\rceil
            +2-2s\geq\left\lceil\frac{r}{2}\right\rceil.
\end{align*}

Now we prove the claim.

(1) If $H$ contains only one or two vertices, then the assertion is
trivially true. So we assume that $\nu(H)\geq 3$. Let $u'$ be a
vertex in $H$ such that $d_H(u')$ is as small as possible. Thus
$d_H(v)\geq\lceil\lceil r/2\rceil/2\rceil$ for any vertex $v'\in
V(H)\backslash\{u'\}$. By Lemma 2, $H$ contains a path from $u$ of
order at least $\min\{\nu(H),\lceil r/2\rceil\}$.

(2) Let $B$ be the end-block of $H$ containing $D$ and $b$ be the
cut-vertex of $H$ contained in $B$. If $B$ contains only two
vertices, then the assertion is trivially true. So we assume that
$\nu(D)\geq 3$, from which we can see that $B$ is 2-connected. Let
$u'$ be a vertex in $B-b$ such that $d_{H}(u)$ is as small as
possible. Thus every vertex in $V(B)\backslash\{b,u'\}$ has degree
at least $\lceil\lceil r/2\rceil/2\rceil$ in $B$. By Lemma 2, $B$
contains a path from $b$ of order at least $\min\{\nu(B),\lceil
r/2\rceil\}$, and $H$ contains a path from $u$ of order at least
$\min\{\nu(B),\lceil r/2\rceil\}=\min\{\nu(D)+1,\lceil r/2\rceil\}$.
\end{proof}

Now we choose $D$ among all the non-separable components and
branches of $G-C$ such that the order of $D$ is as small as
possible. We set a parameter $a$ such that $a=0$ if $D$ is a
non-separable component, and $a=1$ if $D$ is a branch of $G-C$.

If $D$ is a branch of $G-C$, then let $H$ be the component of $G-C$,
and $B$ the end-block of $G-C$, containing $D$; if $D$ is a
component of $G-C$, then let $H=B=D$.

\begin{subcase}
  $\nu(D)=1$.
\end{subcase}

Let $v$ be the vertex in $D$. If $D=H$, then let $R=G-C-H$,
$X=N_C^+(H)$. If $D\neq H$, then let $y$ be a vertex in $H-B$,
$R=G-C-B-y$ and $X=N_C^+(H)\cup\{y\}$. Thus every component of $R$
is joined to at most one vertex in $X$. Moreover, we have
\begin{align*}
\nu(R)  & =\nu(G)-\nu(C)-1-2a\\
        & =m+n-s-n+r-1-2a=m+r-s-2a-1,
\end{align*}
and
\begin{align*}
|X|=d_C(H)+a\geq
d_C(v)+a=d(v)\geq\left\lceil\frac{n}{2}\right\rceil-s+1.
\end{align*}
Let $G'=G[V(R)\cup X]$. Note that there is a path of order at least
$2+2a$ with an end-vertex in $C$ and all other vertices in $H$. We
have $r\geq 2+2a$, and
\begin{align*}
\nu(G') & =\nu(R)+|X|\geq m+r-s-2a-1+\left\lceil\frac{n}{2}\right\rceil-s+1\\
        & =m+\left\lceil\frac{n}{2}\right\rceil+r-2s-2a
            \geq m+\left\lceil\frac{n}{2}\right\rceil+2-2s\geq m.
\end{align*}

\begin{claim}
  $D\neq H$ or $d_C(H)\geq 3$.
\end{claim}

\begin{proof}
Assume that $D=H$ and $d_C(H)=2$. Since $d_C(H)=d(v)\geq\lceil
n/2\rceil-s+1$, we have $n\leq 8$. We claim that every component of
$G-C$ is an isolated vertex. Suppose on the contrary that there is a
component $H'$ of $G-C$ with order at least 2. Note that there are
at least two better segments of $C$ with respect to $H'$. We have
$\nu(C)\geq 6$, and $G[V(C)\cup V(H')]$ contains a $P_8$, a
contradiction. Thus as we claimed, every component of $G-C$ is an
isolated vertex.

Note that $\nu(R)=m+r-s-1$. Since $s\leq 3$ (when $n\leq 8$) and
$r\geq 2$, we have $\nu(R)\geq m-2$. If $\nu(R)\geq m$, then there
is a $C_m$ in $\overline{G'}$; if $\nu(R)=m-1$, then $r=s\leq 3$,
and one of the two vertices in $N_C^+(H)$ is nonadjacent to every
vertex in $R$, and there is a $C_m$ in $\overline{G'}$; if
$\nu(R)=m-2$, then $r=s-1\leq 2$, and the two vertices in $N_C^+(H)$
are nonadjacent to every vertex in $R$, and there is a $C_m$ in
$\overline{G'}$. In any case we get a contradiction. So we conclude
that $D\neq H$ or $d_C(H)\geq 3$.
\end{proof}

By Claim 8, we can see that $|X|\geq 3$.

If there is a cycle $C'$ in $R$ with order $r+\pa(r)$, then let $P$
be a path between $C$ and $C'$, and $C\cup P\cup C'$ will contain a
$P_n$, a contradiction. Thus we assume that $R$ contains no cycle of
order $r+\pa(r)$. Since
\begin{align*}
        & \nu(R)+1-\frac{3}{2}(r+\pa(r))=m+r-s-2a-1+1-\frac{3}{2}(r+\pa(r))\\
\geq    & m-s-2a-\left\lceil\frac{r}{2}\right\rceil-\pa(r)
            \geq 2n-2s-1\geq 0,
\end{align*}
by Lemma 4, there is a path in $\overline{R}$ of order at least
\begin{align*}
p   & =\nu(R)+1-\frac{r+\pa(r)}{2}=m+r-s-2a-1+1
        -\left\lceil\frac{r}{2}\right\rceil\\
    & =m+\left\lfloor\frac{r}{2}\right\rfloor-s-2a.
\end{align*}

Note that
\begin{align*}
    & p+2|X|-3\geq m+\left\lfloor\frac{r}{2}\right\rfloor-s-2a+
        2\left(\left\lceil\frac{n}{2}\right\rceil-s+1\right)-3\\
=   & m+n+\pa(n)+\left\lfloor\frac{r}{2}\right\rfloor-3s-2a-1
        \geq m+n+\pa(n)-3s-a.
\end{align*}
We can see that $p+2|X|-3\geq m$, when $n\geq 9$, unless $n=11$ or
$12$ and $a=1$. If $n\leq 8$, then noting that $|X|\geq 3$, we also
have
$$p+2|X|-3\geq m+\left\lfloor\frac{r}{2}\right\rfloor-s-2a+3\geq m-s+3\geq m.$$
By Lemma 7, $\overline{G'}$ contains a $C_m$, a contradiction.

\noindent\textbf{Petty Case.} $n=11$ or $12$ and $a=1$.

We claim that every component of $G-C$ is a $K_1$, $K_2$, $K_3$ or a
star $K_{1,k}$. Suppose the contrary that there is a component $H'$
of order at least 4 which is not a star. Since there are at least
two best segments of $C$ with respect to $H'$, we can see that
$\nu(C)\geq 8$. Note that there is a path $P$ of order at least 5
with one end-vertex in $C$ and all other vertices in $H'$. This
implies that $G[V(C)\cup V(H')]$ contains a $P_{12}$, a
contradiction. Thus as we claimed, every component of $G-C$ is a
$K_1$, $K_2$, $K_3$ or a star $K_{1,k}$.

Since $H$ is not a $K_1$, $K_2$ or $K_3$, we conclude that $H$ is a
star. Now we choose a component $H'$ of $G-C$ that is a maximum star
of $G-C$, and let $u'$ be the center of $H'$, $v'$ and $y'$ be two
end-vertices of $H'$. Let $R'=G-C-\{u',v',y'\}$,
$X'=N_C^+(H')\cup\{y'\}$ and $G''=G[V(R')\cup X']$. By the analysis
above, we have
\begin{align*}
\nu(R')\geq m+r-s-3 \mbox{ and }
|X'|\geq\left\lceil\frac{n}{2}\right\rceil-s+1.
\end{align*}

Since $\nu(R')\geq m+r-s-3\geq 2n+2-s\geq 20$. If $G-C$ has at least
three components, then $R'$ is disconnected; if $G-C$ has exactly
two components, then $H'$ is a star with at least 4 vertices, and
$R'$ is connected; if $G-C$ consists of only one component $H'$,
then $R'=H'-\{u',v',y'\}$ is empty, and thus disconnected. Thus in
any case, $\overline{R'}$ is connected.

Let $H''$ be a component of $R'$ with the maximum order. If
$\nu(H'')\leq\lceil\nu(R')/2\rceil$, then every vertex of $R'$ has
degree at least $\lfloor\nu(R')/2\rfloor$ in $\overline{R'}$ . By
Lemma 2, $R'$ contains a Hamilton path. If
$\nu(H'')\geq\lceil\nu(R')/2\rceil+1$, then $H''$ is a star with at
least 4 vertices. Let $u''$ be the center of the star. Then every
vertex in $V(R')\backslash\{u''\}$ has degree at least
$\lceil\nu(R')/2\rceil$ in $\overline{R'-u''}$. By Lemma 2,
$\overline{R'-u''}$ contains a Hamilton cycle and $\overline{R'}$
contains a Hamilton path. In any case $R'$ contains a path of order
at least $p'=\nu(R')$. Thus we have
$$p'+2|X'|-3\geq\nu(R')+|X'|\geq m.$$

By Lemma 7, $\overline{G''}$ contains a $C_m$, a contradiction.

\begin{subcase}
  $\nu(D)=2$.
\end{subcase}

Let $v,v'$ be the two vertices in $D$. If $D=H$, then let $R=G-C-H$,
$X_1=N_C^+(H)$, $X_2=N_C^-(H)$. If $D\neq H$, then let $y$ be a
vertex in $H-B$, let $R=G-C-B-y$, $X_1=N_C^+(H)\cup\{y\}$,
$X_2=N_C^-(H)\cup\{y\}$. Thus every component of $R$ is joined to at
most one vertex in $X_i$, $i=1,2$, and
\begin{align*}
\nu(R)  & =\nu(G)-\nu(C)-2-2a\\
        & =m+n-s-n+r-2-2a\\
        & =m+r-s-2a-2.
\end{align*}
Let $X=X_1\cup X_2$ and $G'=G[V(R)\cup X]$. Note that there is a
path of order at least $3+3a$ with an end-vertex in $C$ and all
other vertices in $H$. We have that $r\geq 3+3a$.

Let $N_C(H)=\{z_1,z_2,\ldots,z_k\}$, where $z_i$, $1\leq i\leq k$,
are in order along $C$. Since there are at least two better
segments, we have $|X_1\backslash X_2|=|X_2\backslash X_1|\geq 2$.
For any vertex $z_i\in N_C(H)$: if $z_i$ is adjacent to exactly one
vertex in $\{v,v'\}$, then $\overrightarrow{C}[z_i,z_{i+1}]$ is a
good segment; if $z_i$ is adjacent to both $v$ and $v'$, then
$\overrightarrow{C}[z_i,z_{i+1}]$ is a better segment. This implies
that
\begin{align*}
|X|  & \geq d_C(v)+d_C(v')+a\\
        & \geq 2\left(\left\lceil\frac{n}{2}\right\rceil-s+1-1-a\right)+a\\
        & =n+\pa(n)-2s-a,
\end{align*}
and
\begin{align*}
\nu(G') & =\nu(R)+|X|\\
        & \geq m+r-s-2a-2+n+\pa(n)-2s-a\\
        & \geq m+3+3a-s-2a-2+n+\pa(n)-2s-a\\
        & \geq m+n+\pa(n)+1-3s\geq m.
\end{align*}

Since there are at least two better segments of $C$ with respect to
$H$, $\nu(C)\geq 6$. Thus there is a path in $G[V(C)\cup V(H)]$ of
order at least 8, which implies that $n\geq 9$.

\begin{claim}
  $D\neq H$ or $d_C(H)\geq 3$.
\end{claim}

\begin{proof}
Assume that $D=H$ and $d_C(H)=2$. Note that the two segments of $C$
with respect to $H$ are both better. Since $d_C(H)\geq
d(v)-1\geq\lceil n/2\rceil-s$, we have $n\leq 12$. We claim that
every component of $R$ has order at most $3$. Suppose on the
contrary that there is a component $H'$ of $G-C$ that has order at
least 4. Note that $H'$ is not a star. There are at least two best
segments of $C$ with respect to $H'$, which implies that $\nu(C)\geq
8$. By Claim 7, there is a path of order at least 5 with one
end-vertex in $C$ and all other vertices in $H'$. Thus $G[V(C)\cup
V(H')]$ contains a $P_{12}$, a contradiction. Thus as we claimed,
every component of $R$ has order 2 or 3.

Note that $\nu(R)=m+r-s-2$. Since $s\leq 4$ (when $n\leq 12$) and
$r\geq 3$, we have $\nu(R)\geq m-3$. If $\nu(R)\geq m$, then by
Lemma 3 there is a $C_m$ in $\overline{G'}$; if $\nu(R)=m-1$ or
$m-2$, then we have $r\leq s+1\leq 5$, and one of the two vertices
in $N_C^+(H)$ ($N_C^-(H)$) is nonadjacent to every vertex in $R$,
and there is a $C_m$ in $\overline{G'}$; if $\nu(R)=m-3$, then
$r=s-1\leq 3$, and every vertex in $N_C^+(H)$ and $N_C^-(H)$ is
nonadjacent to every vertex in $R$, and there is a $C_m$ in
$\overline{G'}$. In any case, we get a contradiction. So we conclude
that $D\neq H$ or $d_C(H)\geq 3$.
\end{proof}

By Claim 9, we have $|X_1|=|X_2|\geq 3$.

If there is a cycle in $R$ of order $r+\pa(r)$, then there will be a
path of order at least $n$ in $G$. Thus we assume that $R$ contains
no cycle of order $r+\pa(r)$. Since
\begin{align*}
        & \nu(R)+1-\frac{3}{2}(r+\pa(r))=m+r-s-2-2a+1-\frac{3}{2}(r+\pa(r))\\
\geq    & m-s-2a-\left\lceil\frac{r}{2}\right\rceil-\pa(r)-1
            \geq 2n-2s-2\geq 0,
\end{align*}
by Lemma 4, there is a path in $\overline{R}$ of order at least
\begin{align*}
p   & =\nu(R)+1-\frac{r+\pa(r)}{2}=m+r-s-2-2a+1
        -\left\lceil\frac{r}{2}\right\rceil\\
    & =m+\left\lfloor\frac{r}{2}\right\rfloor-s-2a-1.
\end{align*}

Note that
\begin{align*}
        & p+2|X|-5\\
\geq    & m+\left\lfloor\frac{r}{2}\right\rfloor-s-2a-1+2(n+\pa(n)-2s-a)-5\\
=       & m+2n+2\pa(n)+\left\lfloor\frac{r}{2}\right\rfloor-5s-4a-6\\
\geq    & m+2n+2\pa(n)-5s-7.
\end{align*}
We can see that $p+2|X|-5\geq m$, when $n\geq 13$. If $n\leq 12$,
then noting that $d_C(H)+a\geq 3$ and $|X|\geq 5$, we also have
$$p+2|X|-5\geq m+\left\lfloor\frac{r}{2}\right\rfloor-s-2a-1+5
\geq m-s+5\geq m.$$

By Lemma 8, $\overline{G'}$ contains a $C_m$, a contradiction.

\begin{subcase}
  $3\leq\nu(D)\leq\lceil r/2\rceil-1$.
\end{subcase}

In this case, $r\geq 7$. If $D=H$, then let $R=G-C-H$,
$X_1=N_C^+(H)$ and $X_2=N_C^-(H)$. If $D\neq H$, then let $y$ be a
vertex in $H-B$ which is not a cut-vertex of $H-B$, let $R=G-C-B-y$,
$X_1=N_C^+(H)\cup\{y\}$ and $X_2=N_C^-(H)\cup\{y\}$. Thus every
component of $R$ is joined to at most one vertex in $X_i$, $i=1,2$,
and
\begin{align*}
\nu(R)  & =\nu(G)-\nu(C)-\nu(D)-2a\\
        & \geq m+n-s-n+r-\left\lceil\frac{r}{2}\right\rceil+1-2a\\
        & =m+\left\lfloor\frac{r}{2}\right\rfloor+1-s-2a.
\end{align*}
Clearly, every component of $R$ has order at least 2, and
\begin{align*}
        & \left\lceil\frac{3\nu(R)}{2}\right\rceil+4
            \geq\left\lceil\frac{3(m+\lfloor r/2\rfloor+1-s
            -2a)}{2}\right\rceil+4\\
\geq    & m+\left\lceil\frac{m+3(3+1-s-2a)}{2}\right\rceil+4\\
\geq    & m+\left\lceil\frac{2n+15-3s}{2}\right\rceil\geq m.
\end{align*}

Let $X=X_1\cup X_2$ and $G'=G[V(G-B)\backslash N_C(H)]$. Since there
are at least two best segments with respect to $H$, we have
$|X_1\backslash X_2|=|X_2\backslash X_1|\geq 2$. Let $v$ be a vertex
in $D$.

Since $R$ contains no cycle of length $r+\pa(r)$ and
\begin{align*}
        & \nu(R)+1-\frac{3}{2}(r+\pa(r))\\
=       & m+\left\lfloor\frac{r}{2}\right\rfloor+1-s-2a+1
            -3\cdot\left\lceil\frac{r}{2}\right\rceil\\
\geq    & m+2-r-2\pa(r)-s-2a\\
\geq    & 2n+3-2s+\pa(n)+2-2\pa(r)-s-2a\\
\geq    & 2n+\pa(n)+1-3s\geq 0,
\end{align*}
$\overline{R}$ contains a path of order at least
\begin{align*}
p   & =\nu(R)+1-\frac{r+\pa(r)}{2}\\
    & \geq m+\left\lfloor\frac{r}{2}\right\rfloor+1-s-2a+1
        -\left\lceil\frac{r}{2}\right\rceil\\
    & =m+2-s-\pa(r)-2a.
\end{align*}

\begin{claim}
  $D\neq H$ or $d_C(H)\geq 3$.
\end{claim}

\begin{proof}
Assume that $D=H$ and $d_C(H)=2$. Thus
$$d_D(v)\geq\left\lceil\frac{n}{2}\right\rceil-s+1-2
=\left\lceil\frac{n}{2}\right\rceil-s-1,$$
and
$$\nu(D)\geq 1+d_D(v)\geq\left\lceil\frac{n}{2}\right\rceil-s
\geq s-2\geq\left\lceil\frac{r}{2}\right\rceil-1.$$ This implies
that $\lceil r/2\rceil=s-1$, $\nu(D)=\lceil r/2\rceil-1$, and
$d_D(v)=\nu(D)-1$. Note that in this case every vertex in $D$ has
degree $\nu(D)-1$, and thus $D$ is a clique.

If every vertex in $N_C^+(H)$ is joined to some component of $G-C$,
then by Claim 7, we can find a path from the cycle $C$, component
$H$ and the two components joined to the two vertices in $N_C^+(H)$,
of order at least
\begin{align*}
    & \nu(C)+3\nu(D)=\nu(C)+3\cdot\left(\left\lceil\frac{r}{2}\right\rceil-1\right)\\
=   & n-r+r+\pa(r)+\left\lceil\frac{r}{2}\right\rceil-3\geq n,
\end{align*}
a contradiction. Thus there is a vertex $v'$ in $N_C^+(H)$ that is
not joined to every component of $G-C$. Let $G''=G-C$.

Since $\lceil r/2\rceil=s-1$ and $r\geq 7$, we can see that $r\geq
s+1$. Thus
$$\nu(G'')=\nu(G)-\nu(C)\geq m+n-s-n+r=m+r-s\geq m.$$
Note that in this case, $\overline{G''-H}=\overline{R}$ contains a
path of order at least $p\geq m+2-s-\pa(r)\geq m+3-r-\pa(r)$ and
\begin{align*}
    & p+2\nu(H)-1\geq m+3-r-\pa(r)
        +2\cdot\left(\left\lceil\frac{r}{2}\right\rceil-1\right)-1\\
=   & m+3-r-\pa(r)+r+\pa(r)-2-1=m.
\end{align*}
Since
$$\nu(R)\geq m+\left\lfloor\frac{r}{2}\right\rfloor+1-s=m-\pa(r)
\geq\left\lceil\frac{m}{2}\right\rceil,$$ by Lemma 5,
$\overline{G''}$ contains a $C_m$, and $\overline{G}$ contains a
$W_m$ with the hub $v'$, a contradiction.
\end{proof}

By Claim 10, $|X_1|=|X_2|\geq 3$. If $D\neq H$, then since there are
at least two best segments with respect to $H$, we can see that
$\nu(C)\geq 8$. By Claim 7, there is a path of order at least 6 with
one end-vertex in $C$ and all other vertices in $H$, which implies
that $G[V(C)\cup V(H)]$ contains a $P_{13}$; if $D=H$ and
$d_C(H)\geq 3$, noting that at least two segments of $C$ with
respect to $H$ are best, we have $\nu(C)\geq 10$. By Claim 7, there
is a path of order at least 4 with an end-vertex in $C$ and all
internal vertices in $H$, $G[V(C)\cup V(H)]$ contains a $P_{13}$ as
well. Thus we conclude that $n\geq 14$.

Let $H'$ be a component of $R$, and let $W$ be the union of $X$ and
the set of vertices in $V(C)\backslash N_C(H)$ not joined to $H'$.
For any two vertices $x,y$ with $xy\in E(C)$: if one of $x,y$ is in
$N_C(H)$, then the other one will be in $X\subset W$; if none of
them is in $N_C(H)$, then at least one of them will not be joined to
$H'$, otherwise there will be a cycle longer than $C$. This implies
that $|W|\geq \lceil(n-r)/2\rceil+a=q$.

Since
\begin{align*}
        & \nu(R)+q-1\\
\geq    & m+\left\lfloor\frac{r}{2}\right\rfloor+1-s-2a
            +\left\lceil\frac{n-r}{2}\right\rceil+a-1\\
\geq    & m+\left\lfloor\frac{n}{2}\right\rfloor-s-a\geq m,
\end{align*}
($n\geq 14$) and
\begin{align*}
        & p+2q-5\\
\geq    & m+2-s-\pa(r)-2a
            +2\cdot\left(\left\lceil\frac{n-r}{2}\right\rceil+a\right)-5\\
=       & m+n-r-s-3+\pa(n-r)-\pa(r)\\
=       & m+n-3s-1+\pa(n)+\pa(n-r)-\pa(r)\\
\geq    & m+n-3s-1,
\end{align*}
we can see that $p+2q-5\geq m$, unless $n=15$, $s=5$ and $r=7$.

\noindent\textbf{Petty Case.} $n=15$, $s=5$ and $r=7$.

In this case, $\nu(C)=8$ which implies that $D\neq H$. It is easy to
find a path with two end-vertices in $C$ and all internal vertices
in $H$ of order at least 7. Thus $\nu(C)\geq 12$, a contradiction.

By Lemma 9, $\overline{G'}$ contains a $C_m$, a contradiction.

\begin{subcase}
  $\nu(D)\geq\max\{\lceil r/2\rceil,3\}$.
\end{subcase}

By Claim 7, there is a path of order at least 4 with an end-vertex
in $C$ and all other vertices in $H$. Thus we have $r\geq 4$. Let
$H'$ be an arbitrary component of $R$ and $u\in V(H')$. By Claim 7,
$H'$ contains a path from $u$ of order at least $\lceil r/2\rceil$.
Thus for any edge $xy\in E(C)$, either $x$ or $y$ is not joined to
any components of $G-C$, otherwise there will be a $P_n$ in $G$.
Moreover, if $r$ is odd and $x$ is joined to some component, say
$H'$, of $G-C$, then $x^{++}$ will not be joined to any component of
$G-C$ other than $H'$ as well.

\begin{subsubcase}
Every component of $G-C$ has order at most $r-1$.
\end{subsubcase}

Let $v$ be a vertex in $N_C^+(H)$, and let $G'=G[V(G-C)\cup
N_C^+(H)\backslash\{v\}]$. Note that $v$ is nonadjacent to every
vertex in $G'$, and every component of $G'$ has order at most
$$r-1\leq 2s-\pa(n)-2-1\leq\left\lceil\frac{n}{2}\right\rceil
\leq\left\lfloor\frac{m}{2}\right\rfloor.$$
Let $u$ be a vertex in $H$. Since
$$d_C(H)\geq d_C(u)\geq d(u)-\nu(H)+1\geq d(u)+2-r,$$
and
\begin{align*}
\nu(G') & =\nu(G)-\nu(C)+d_C(H)-1\\
        & \geq m+n-s-n+r+d(v)+1-r\\
        & \geq m-s+\left\lceil\frac{n}{2}\right\rceil-s+1+1\\
        & = m+\left\lceil\frac{n}{2}\right\rceil+2-2s\geq m,
\end{align*}
by Lemma 3, there is a $C_m$ in $\overline{G'}$, a contradiction.

\begin{subsubcase}
There is a component of $G-C$ of order at least $r$.
\end{subsubcase}

Let $H'$ be a component of $G-C$ with order at least $r$. We claim
that there is a vertex $u$ in $H'$ with $d_{H'}(u)\leq\lceil
r/2\rceil-1$. Suppose the contrary that every vertex of $H'$ has
degree at least $\lceil r/2\rceil$ in $H'$. If $H'$ is 2-connected,
then by Lemma 2, there is a cycle of order at least $r$ in $H'$, and
$G$ will contain a $P_n$; if $G$ is separable, letting $B'$ be any
end-block of $H'$, $b'$ be the cut-vertex of $H'$ contained in $B'$,
and $u'$ be any vertex in $V(B)\backslash\{b'\}$, then there is a
path from $b'$ to $u'$ of order at least $\lceil r/2\rceil+1$. Thus
$G$ will contain a $P_n$ as well. So we assume that there is a
vertex $u$ in $H'$ with $d_{H'}(u)\leq\lceil r/2\rceil-1$.

Let $v$ be a vertex in $N_C^+(H')$, $X=N_C^+(H')\backslash\{v\}$. If
$r$ is odd, then let $\overrightarrow{C}[z,z']$ be a better segment
of $C$ with respect to $H'$ not containing $v$, and we add $z^{++}$
to $X$. Let $G'=G[V(G-C)\cup X]$. Note that $v$ is nonadjacent to
every vertex in $G'$, and there are no edges between $G-C$ and $X$.

Since
\begin{align*}
        & d_C(H')\geq d_C(u)=d(u)-d_{H'}(u)\\
\geq    &\left\lceil\frac{n}{2}\right\rceil-s+1
            -\left\lceil\frac{r}{2}\right\rceil+1
            =\left\lceil\frac{n}{2}\right\rceil+2
            -\left\lceil\frac{r}{2}\right\rceil-s,
\end{align*}
we have
$$|X|=d_C(H)-1+\pa(r)\geq\left\lceil\frac{n}{2}\right\rceil+1
-\left\lfloor\frac{r}{2}\right\rfloor-s$$
and
\begin{align*}
\nu(G') & =\nu(G)-\nu(C)+|X|\\
        & \geq m+n-s-n+r+\left\lceil\frac{n}{2}\right\rceil+1
            -\left\lfloor\frac{r}{2}\right\rfloor-s\\
        & \geq m-s+\left\lceil\frac{r}{2}\right\rceil
            +\left\lceil\frac{n}{2}\right\rceil-s+1\\
        & \geq m+\left\lceil\frac{n}{2}\right\rceil+3-2s\geq m.
\end{align*}

Since $G-C$ contains no cycle of length $r+\pa(r)$ and
\begin{align*}
        & \nu(G-C)+1-\frac{3}{2}(r+\pa(r))\\
=       & m+r-s+1-3\cdot\left\lceil\frac{r}{2}\right\rceil\\
\geq    & m-\left\lceil\frac{r}{2}\right\rceil-\pa(r)-s\\
\geq    & 2n-2s\geq 0,
\end{align*}
$\overline{G-C}$ contains a path of order at least
\begin{align*}
p   & =\nu(G-C)+1-\frac{r+\pa(r)}{2}\\
    & =m+r-s+1-\left\lceil\frac{r}{2}\right\rceil
        =m+\left\lfloor\frac{r}{2}\right\rfloor+1-s.
\end{align*}
Clearly $|X|\geq 1$. If $\lfloor r/2\rfloor\geq s-2$, then
\begin{align*}
        & p+2|X|-1\\
\geq    & m+\left\lfloor\frac{r}{2}\right\rfloor+1-s+2-1\\
\geq    & m+s-2+1-s+2-1=m.
\end{align*}
If $\lfloor r/2\rfloor\leq s-3$, then
\begin{align*}
        & p+2|X|-1\\
\geq    &  m+\left\lfloor\frac{r}{2}\right\rfloor+1-s+
            2\cdot\left(\left\lceil\frac{n}{2}\right\rceil+1
            -\left\lfloor\frac{r}{2}\right\rfloor-s\right)-1\\
=       & m+n+\pa(n)+2-\left\lfloor\frac{r}{2}\right\rfloor-3s\\
\geq    & m+n+\pa(n)+5-4s\geq m.
\end{align*}
Since
$$\nu(G-C)=m+r-s\geq\left\lceil\frac{m}{2}\right\rceil,$$ by Lemma
5, there is a $C_m$ in $\overline{G'}$, a contradiction.

The proof is complete.\hfill$\Box$

\section{Remarks}

A \emph{linear forest} is a forest such that every component of it
is a path. From our main result of the paper, we can conclude the
following result.

\begin{corollary}
Let $n\geq 2$, $m\geq 2n+1$ and $F$ be a linear forest on $m$
vertices. Then $$R(P_n,K_1\vee F)=t(n,m).$$
\end{corollary}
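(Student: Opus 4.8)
The plan is to prove the two inequalities $R(P_n,K_1\vee F)\le t(n,m)$ and $R(P_n,K_1\vee F)\ge t(n,m)$ separately; in each case almost all the work has already been done in the proof of Theorem 5, since $K_1\vee F$ is ``sandwiched'' between $W_m$ and a graph carrying a vertex of degree $m$.

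For the upper bound, the point is that a linear forest $F$ on $m$ vertices embeds into $C_m$ as a spanning subgraph: if the path components of $F$ have orders $a_1,\dots,a_k$ with $\sum_{i=1}^k a_i=m$, place them on consecutive arcs of the cycle $C_m=v_1v_2\cdots v_mv_1$ (the component of order $a_1$ on $v_1\cdots v_{a_1}$, the next on $v_{a_1+1}\cdots v_{a_1+a_2}$, and so on). Hence $K_1\vee F$ is a spanning subgraph of $K_1\vee C_m=W_m$. Because $R(P_n,\cdot)$ is monotone with respect to (not necessarily induced) subgraphs in the second coordinate, $R(P_n,K_1\vee F)\le R(P_n,W_m)$, which equals $t(n,m)$ by Theorem 5 (our hypotheses $n\ge 2$ and $m\ge 2n+1$ are exactly those of that theorem).

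For the lower bound I would reuse verbatim the extremal graph constructed at the start of the proof of Theorem 5: with $t=t(n,m)$, Lemma 1 gives $t-1\in\mathcal L[t-m,n-1]$, one writes $t-1=\sum_{i=1}^k t_i$ with each $t_i\in[t-m,n-1]$, and $G$ is the disjoint union of cliques on $t_1,\dots,t_k$ vertices. As shown there, $G$ contains no $P_n$, and every vertex of $G$ has fewer than $m$ non-neighbours (a vertex in the clique of order $t_i$ has exactly $(t-1)-t_i\le m-1$ of them); equivalently $\overline G$ has maximum degree at most $m-1$. Since the apex vertex of $K_1\vee F$ has degree $m$ (and $m\ge 3$), $\overline G$ contains no copy of $K_1\vee F$. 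Thus $G$ is a graph on $t-1$ vertices with neither $P_n$ in $G$ nor $K_1\vee F$ in $\overline G$, so $R(P_n,K_1\vee F)\ge t=t(n,m)$.

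There is no real obstacle here: the corollary is a soft consequence of Theorem 5. The only structural features of $K_1\vee F$ we use are that it has a vertex of degree $m$ (so the lower-bound complement rejects it for exactly the reason it rejected $W_m$) and that it is a spanning subgraph of $W_m$ (so the upper bound is inherited by subgraph monotonicity). The only points requiring care are the spanning embedding $F\subseteq C_m$ and the elementary estimate $(t-1)-t_i\le m-1$, both of which are immediate.
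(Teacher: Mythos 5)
Your proposal is correct and follows essentially the same route as the paper: the lower bound reuses the extremal union-of-cliques graph from the start of Section 3 (whose complement has maximum degree at most $m-1$ and so cannot contain the degree-$m$ apex of $K_1\vee F$), and the upper bound follows from $K_1\vee F\subseteq W_m$ together with Theorem 5. You merely spell out the two details the paper leaves implicit (the spanning embedding $F\subseteq C_m$ and the bound $(t-1)-t_i\le m-1$), both of which are verified correctly.
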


\begin{proof}
Note that the graph constructed in the beginning of Section 3
contains no $P_n$ and its complement contains no $K_1\vee F$. We
conclude that $R(P_n,K_1\vee F)\geq t(n,m)$. On the other hand,
since $K_1\vee F$ is a subgraph of $W_m$, we have $R(P_n,K_1\vee
F)\leq R(P_n,W_m)\leq t(n,m)$.
\end{proof}

For the case $F$ is an empty graph, the above formula gives the
Ramsey numbers of paths versus stars when $m\geq 2n+1$. In fact,
Parsons \cite{Parsons} gave all the values of the path-star Ramsey
numbers by a recursive formula. The interested readers can compare
our formula with the recursive one in \cite{Parsons}.

\section*{Acknowledgments}

The authors are very grateful to Professor Yunqing Zhang for
providing them the paper \cite{Zhang}.

\end{document}